\definecolor{darkblue}{RGB}{0,0,160}
\def\e{{\rm e}}
\def\eps{\varepsilon}
\def\d{{\rm d}}
\def\dist{{\rm dist}}
\def\R {\mathbb{R}}
\def\SS {\mathbb{S}}
\def\M {{\mathrm M}}
\def\1 {{\mbox{\boldmath 1}}}
\def \l {\langle}
\def \r {\rangle}
\def \and{\quad\text{and}\quad}
\def\ind{\cic{1}}
\newcommand{\cic}{\bm}
\def \no#1#2#3 {{\bf #1} (#3), #2.}
\def \eds#1#2#3 {#1, #2, #3.}
\newcounter{counter}
\numberwithin{equation}{section}
\numberwithin{counter2}{section}
\newtheorem{proposition}[subsection]{Proposition}
\newtheorem*{mnest}{The  (TCI)$_{(m,n)}$ estimate}
\newtheorem*{mnpest}{The  (PART)$_{(m,n)}$  estimate}
\newtheorem{theorem}[counter]{Theorem}
\newtheorem{corollary}[subsection]{Corollary}
\newtheorem{lemma}[subsection]{Lemma}
\theoremstyle{definition}
\newtheorem*{remark*}{Remark}
\newtheorem*{warn*}{A word of warning}
\newtheorem{remark}[subsection]{Remark} 
\theoremstyle{plain}
	 \def\Xint#1{\mathchoice
	   {\XXint\displaystyle\textstyle{#1}}%
	   {\XXint\textstyle\scriptstyle{#1}}%
	   {\XXint\scriptstyle\scriptscriptstyle{#1}}%
	   {\XXint\scriptscriptstyle\scriptscriptstyle{#1}}%
	   \!\int}
	 \def\XXint#1#2#3{{\setbox0=\hbox{$#1{#2#3}{\int}$}
	     \vcenter{\hbox{$#2#3$}}\kern-.5\wd0}}
	 \def\avgint{\Xint-}
\numberwithin{figure}{section}
\let\save@mathaccent\mathaccent
\newcommand*\if@single[3]{%
  \setbox0\hbox{${\mathaccent"0362{#1}}^H$}%
  \setbox2\hbox{${\mathaccent"0362{\kern0pt#1}}^H$}%
  \ifdim\ht0=\ht2 #3\else #2\fi
  }
\newcommand*\rel@kern[1]{\kern#1\dimexpr\macc@kerna}
\newcommand*\widebar[1]{\@ifnextchar^{{\wide@bar{#1}{0}}}{\wide@bar{#1}{1}}}
\newcommand*\wide@bar[2]{\if@single{#1}{\wide@bar@{#1}{#2}{1}}{\wide@bar@{#1}{#2}{2}}}
\newcommand*\wide@bar@[3]{%
  \begingroup
  \def\mathaccent##1##2{%
    \let\mathaccent\save@mathaccent
    \if#32 \let\macc@nucleus\first@char \fi
    \setbox\z@\hbox{$\macc@style{\macc@nucleus}_{}$}%
    \setbox\tw@\hbox{$\macc@style{\macc@nucleus}{}_{}$}%
    \dimen@\wd\tw@
    \advance\dimen@-\wd\z@
    \divide\dimen@ 3
    \@tempdima\wd\tw@
    \advance\@tempdima-\scriptspace
    \divide\@tempdima 10
    \advance\dimen@-\@tempdima
    \ifdim\dimen@>\z@ \dimen@0pt\fi
    \rel@kern{0.6}\kern-\dimen@
    \if#31
      \overline{\rel@kern{-0.6}\kern\dimen@\macc@nucleus\rel@kern{0.4}\kern\dimen@}%
      \advance\dimen@0.4\dimexpr\macc@kerna
      \let\final@kern#2%
      \ifdim\dimen@<\z@ \let\final@kern1\fi
      \if\final@kern1 \kern-\dimen@\fi
    \else
      \overline{\rel@kern{-0.6}\kern\dimen@#1}%
    \fi
  }%
  \macc@depth\@ne
  \let\math@bgroup\@empty \let\math@egroup\macc@set@skewchar
  \mathsurround\z@ \frozen@everymath{\mathgroup\macc@group\relax}%
  \macc@set@skewchar\relax
  \let\mathaccentV\macc@nested@a
  \if#31
    \macc@nested@a\relax111{#1}%
  \else
    \def\gobble@till@marker##1\endmarker{}%
    \futurelet\first@char\gobble@till@marker#1\endmarker
    \ifcat\noexpand\first@char A\else
      \def\first@char{}%
    \fi
    \macc@nested@a\relax111{\first@char}%
  \fi
  \endgroup
}
\begin{document}

\title[Directional operators on varieties]{Maximal directional operators \\along algebraic varieties}

\author[F. Di Plinio]{Francesco Di Plinio} \address{\noindent Department of Mathematics, University of Virginia, Box 400137, Charlottesville, VA 22904, USA}
\email{\href{mailto:francesco.diplinio@virginia.edu}{\textnormal{francesco.diplinio@virginia.edu}}}
\thanks{F. Di Plinio is partially supported by the National Science Foundation under the grants   NSF-DMS-1650810,  DMS-1800628, and DMS-2000510}

\author[I. Parissis]{Ioannis Parissis}
\address{Departamento de Matem\'aticas, Universidad del Pa\'is Vasco, Aptdo. 644, 48080 Bilbao, Spain and Ikerbasque, Basque Foundation for Science, Bilbao, Spain}

\email{\href{mailto:ioannis.parissis@ehu.es}{\textnormal{ioannis.parissis@ehu.es}}}
\thanks{I.\ Parissis is partially supported by the project PGC2018-094528-B-I00 (AEI/FEDER, UE) with acronym ``IHAIP'', grant T1247-19 of the Basque Government and IKERBASQUE.}

\subjclass[2010]{Primary: 42B25. Secondary: 42B20}
\keywords{Directional operators, polynomial partitioning, Kakeya problem, differentiation of integrals}

\begin{abstract}
We establish the sharp growth order, up to epsilon losses, of the $L^2$-norm of the maximal directional averaging operator along a finite subset $V$ of a polynomial variety of arbitrary dimension $m$, in terms of cardinality.
This is an extension of the works by C\'ordoba, for one-dimensional manifolds, Katz for the circle in two dimensions, and Demeter for the 2-sphere. For the case of directions on the two-dimensional sphere we improve by a factor of $\sqrt{\log N}$ on the best known bound, due to Demeter, and we obtain a sharp estimate for our model operator. Our results imply new $L^2$-estimates for  Kakeya type maximal functions with tubes pointing along polynomial directions. Our proof technique is novel and in particular incorporates an iterated scheme of polynomial partitioning on varieties adapted to directional operators, in the vein of Guth,    Guth-Katz, and Zahl. 
\end{abstract}

\maketitle

\section{Main results, motivation, background and techniques}\label{sec:intro} We are interested in maximal directional averaging operators, defined with respect to a given set of directions. Our focus is on the higher dimensional setting where the directions are distributed on algebraic varieties in $\mathbb R^n$, of any given codimension. 

\subsection{Main results} More precisely, let $n\geq 2$ and define the   directional averages of a smooth function $f$ on $\R^{n}$  by
\[
\langle f\rangle_{v}(x)\coloneqq  \avgint_{-1} ^1    f (x-t v )  \, \d t, \qquad x,v\in \R^{n},  
\]
and consider  the maximal averaging   operator, at unit  scale, associated to a set of directions $V\subset \mathbb \R^{n} $ 
\begin{equation}\label{e:max}
\mathrm{M}_{V} f(x) \coloneqq \sup_{v\in V}  \, \langle |f|\rangle_{v}(x) .
\end{equation}
It is customary to study the \emph{single scale operator} \eqref{e:max} for sets of directions  $V$ which are normalized to live in  a fixed but arbitrary annular region excluding the origin. To state our results, we will use the unit annulus
\[
\mathcal{A}_n(1)\coloneqq \big\{v \in \R^n:\,1\leq |v|<2\big\}.
\]
The first main result of this article is the sharp  bound in terms of the cardinality parameter $N$, up to arbitrarily small losses, for the maximal operator norm
\[
 \sup\left\{\left\|\mathrm{M}_{V} \right\|_{L^2(\R^{n}) }:V\subset Z_m \cap \mathcal{A}_n(1),\, \#V \leq N^m   \right\}
\]
when $Z_m\subseteq \R^{n}$  is a real algebraic variety of a fixed dimension $1\leq m\leq n-1$. We send to Section \ref{sec:alggeom} for the precise definition of a real algebraic variety of dimension $m$; here, we  restrict ourselves to mentioning  a prototypical example. If $D\geq 1$,  we say that $Z_m$ belongs to the class $\mathcal Z_{m,n}^\times(D)$ if 
\[
Z_{m}=\big\{x\in \R^n:\,P_{1}(x)=\cdots=P_{n-m}(x)=0\big\}
\]
where $P_{1},\ldots,P_{n-m}$ are polynomials in $n$ real variables of degree at most $D$, and  the tangent space to $Z_m$ is $m$-dimensional at all points $x\in Z_m$, in the sense of \eqref{e:wedge1} below. For the class   $\mathcal Z^\times_{m,n}(D)$  we are able to obtain a uniform bound on the operator norm. In particular, polynomial graphs over $m$ variables in $\R^n$, of degree at most $D$,
\[
Z_m =\big\{(y,h(y)):\,y \in \R^m \big\}, \quad h:\R^m\to \R^{n-m} \textrm{  polynomial of degree at most } D,
\] 
belong to the class $\mathcal Z^\times_{m,n}(D)$.
\begin{theorem}  \label{t:main:manifold} 
Let $n\geq 2$ and $ 1\leq m\leq n-1$.  Let  $Z_m\subseteq \R^n$ be a real algebraic variety of dimension $m$. Then for all $\eta>0$ there is a constant  $\Theta=\Theta(Z_m,\eta)$ such that

\begin{equation} \label{e:main:growth}
  \sup_{\substack{V\subset Z_m\cap \mathcal{A}_n(1)\\ \#V\leq N^m }}\left\|\mathrm{M}_{V} \right\|_{L^2(\R^{n})} \leq \Theta N^{\frac{m-1}{2}+\eta}.
\end{equation}
The constant $\Theta=\Theta(Z_m,\eta)$ depends on $\eta$ and on explicit algebraic properties of the variety $Z_m$.

Furthermore, if $D\geq 1$ and $\mathcal Z^\times_{m,n}(D)$ is the class of real algebraic varieties defined above,  we have the uniform bound
\begin{equation} \label{e:main:uniform}
\sup_{Z_m\in\mathcal Z^\times _{m,n}(D)}\,  \sup_{\substack{V\subset Z_m\cap \mathcal{A}_n(1)\\ \#V\leq N^m }}\left\|\mathrm{M}_{V} \right\|_{L^2(\R^{n})} \leq \Theta   N^{\frac{m-1}{2}+\eta}
\end{equation}
for all $\eta>0$; the constant $\Theta=\Theta(m,n,D,\eta)$ depends only on the dimension parameters $m,n$, on the degree $D$, and on $\eta>0$.
\end{theorem}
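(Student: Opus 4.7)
The strategy is an induction on the dimension parameter $m$. For $m=1$, the estimate \eqref{e:main:uniform} is essentially the classical theorem of C\'ordoba for finite sets on a smooth curve: any $N$-point subset of a one-dimensional algebraic variety in $\mathcal A_n(1)$ can be handled by a Fefferman--C\'ordoba-type $L^2$ argument, yielding the bound $\Theta N^{\eta}$ (in fact with a log power). Thus the base case is in hand, and the rest of the proof concerns the inductive step $m-1\Rightarrow m$. As a preliminary reduction, one observes that every $m$-dimensional real algebraic variety $Z_m$ can be covered by finitely many pieces that, after affine rescaling and a rotation, belong to the uniform class $\mathcal Z^{\times}_{m,n}(D')$ for some $D'=D'(Z_m)$ controlled by the algebraic data of $Z_m$. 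This reduces \eqref{e:main:growth} to \eqref{e:main:uniform} at the cost of the constant $\Theta(Z_m,\eta)$; the essential work is therefore to prove \eqref{e:main:uniform} uniformly over $\mathcal Z^{\times}_{m,n}(D)$.

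The central device for the inductive step is a polynomial partitioning result on the variety $Z_m$, in the spirit of the ``Polynomial partition on $\mathbb S^n$'' environment announced in the preamble, and ultimately following Guth and Guth--Katz. Given $V\subset Z_m\cap \mathcal A_n(1)$ with $\#V\le N^m$, one produces a polynomial $P$ of controlled degree $\deg P\lesssim N^{\alpha}$ such that the complement $Z_m\setminus Z(P)$ decomposes into cells $\{\Omega_j\}$ with $\#V\cap \Omega_j$ comparable across $j$, while the wall piece $V_{\mathrm{wall}}\coloneqq V\cap Z(P)$ is contained in the lower-dimensional variety $Z_m\cap Z(P)$, whose ``size'' and degree parameters remain controlled in terms of $D$ and $N^{\alpha}$. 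Correspondingly, one splits the maximal operator into a cellular piece $\mathrm M_{V_{\mathrm{cells}}}$ and a wall piece $\mathrm M_{V_{\mathrm{wall}}}$.

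The cellular piece is treated by exploiting the geometric separation between directions lying in distinct cells: after an adapted Littlewood--Paley / spatial localization, the cross-cell interactions are governed by the tangential cone inequality $(\mathrm{TCI})_{(m,n)}$ foreshadowed in the preamble. This inequality absorbs the cost of summing over cells into a factor that scales like the number of cells raised to a power producing exactly the savings we need. The wall piece is handled by applying the inductive hypothesis to the algebraic variety $Z_m\cap Z(P)$, whose dimension is $m-1$ and whose algebraic complexity is governed by $DN^{\alpha}$; the inductive bound contributes a term of order $N^{(m-2)/2+\eta'}$ multiplied by a factor reflecting the increased degree. A careful choice of the partitioning parameter $\alpha$ balances the cellular and wall contributions.

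Putting the pieces together yields a recursive inequality of the form
\begin{equation*}
C(m,N)\;\lesssim\;N^{\frac{m-1}{2}}\,C_{\mathrm{TCI}}(m,N)\;+\;N^{\beta}\,C(m-1,N^{\gamma})
\end{equation*}
for explicit exponents $\beta,\gamma$ depending on the balance chosen; iterating this inequality finitely many times, and paying the $N^{\eta}$ loss to absorb the epsilon-power blowups arising from the partitioning and from the base case, produces \eqref{e:main:uniform}. The main obstacle I anticipate is precisely the calibration of the partitioning on $Z_m$ with the directional structure of the operator: the polynomial partition must be chosen so that the wall variety is still ``transverse enough'' to admit the uniform bound in the class $\mathcal Z^{\times}_{m-1,n}(D')$, and the tangential cone inequality $(\mathrm{TCI})_{(m,n)}$ must be robust enough to survive iteration without degrading the $N^{\eta}$ loss. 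Controlling the algebraic degree along the induction, and ensuring that the constants $\Theta$ remain polynomial in the algebraic data rather than blowing up, is where the bulk of the technical work lies.
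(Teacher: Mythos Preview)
Your plan has the right coarse shape---polynomial partition on $Z_m$, wall piece to lower dimension, cellular piece by some recursion---but there are two substantive gaps that would prevent the argument from closing.

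First, you have misread the acronym. In this paper (TCI)$_{(m,n)}$ is not a ``tangential cone inequality'' that controls cross-cell interactions; it stands for \emph{transverse complete intersection} and is simply the name of the target estimate $K^\times_{m,n,D}(N)\le \Upsilon N^{m-1+\eta}$. There is no separate inequality that ``absorbs the cost of summing over cells.'' The cellular gain comes from a concrete mechanism you do not mention: the Fourier support of $A_{v,s}\circ S_1$ for $v\in\mathsf C$ is contained in a band $R_{\mathsf C}$, and one counts, via Milnor--Thom/Barone--Basu, how many cells a generic hyperplane $\{\xi\cdot x=a\}$ can cross. This overlap bound, combined with the fact that each cell carries $\le (N/E)^m$ directions, yields a self-referential term $E^{m-1}K^\times_{m,n,D}(N/E)$ in the recursion---not the term $N^{(m-1)/2}C_{\mathrm{TCI}}(m,N)$ you wrote, which would be circular.

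Second, and more seriously, your induction is only on $m$, whereas the paper's induction is on the pair $(m,n)$: one needs (TCI)$_{(m-1,n)}$ \emph{and} (TCI)$_{(\mu,n-1)}$ for all $\mu\le m$. The reason is that the overlap count above fails for cells that happen to sit entirely inside a thin band $R_{\xi,3s}$. These ``bad clusters'' must be excised and handled separately: one shows (via an approximate projection lemma, since projections of varieties are only semialgebraic) that the directions in a bad cluster lie near an $m$-dimensional algebraic variety inside the hyperplane $\xi^\perp\cong\mathbb R^{n-1}$, and then invokes the estimate in ambient dimension $n-1$. Without this second induction parameter and the projection step, the cellular argument does not close. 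Relatedly, the induction has \emph{two} base cases, $(1,n)$ and $(n-1,n)$; the latter (directions on $\mathbb S^{n-1}$) requires its own argument and is not a consequence of the $m=1$ case.
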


As we will see in the subsequent section, the constant $\Theta(Z_m,\eta)$ that appears in the statement of Theorem~\ref{t:main:manifold} depends on certain notions of  \emph{degree}  and   \emph{count} of the variety  $Z_m$. We refer the reader to \S\ref{sec:alggeom} for precise definitions and further discussion.

Let $\mathrm{M}_{V,r}$ be the analogue of $\mathrm{M}_V$ of \eqref{e:max}  obtained from the directional averages at scale $r>0$.   Scaling shows that $ \|\mathrm{M}_{V,r}\|_{L^p(\R^n)} = \|\mathrm{M}_{V}\|_{L^p(\R^n)}$. When $f$ is the indicator of the unit ball in ${\R^{n}}$, there holds
\[
\mathrm{M}_{V,N} f(x) \gtrsim \frac{1}{|x|}, \qquad \frac{N}{2}< |x| <N,
\] 
if  ${V\subset \SS^{n-1}}$  is a $\frac{c}{N}$-net with $c>0$ sufficiently small; we then gather that \eqref{e:main:growth}, \eqref{e:main:uniform} are sharp up to the $\eta$-correction {for the codimension one case, $m=n-1$. For general codimensions we can adjust the example above by taking $V\subset \SS^m\subset \SS^{n-1}$ and $f\coloneqq f_1 \otimes f_2:\R^{m+1}\times \R^{n-(m+1)}\to \R$ where $f_1$ is the indicator of the unit ball in $\R^{m+1}$ and $f_1$ is a smooth bump function in $\R^{n-(m+1)}$ which is identically $1$ on the unit ball of $\R^{n-(m+1)}$. This modification also proves the sharpness of  \eqref{e:main:growth}, \eqref{e:main:uniform}, up to the $\eta$-correction for  general codimensions.} In fact, a logarithmic correction is necessary in the case $m=1$; see \cite[Proposition 1.3]{Cor77}. 

All cases of Theorem \ref{t:main:manifold} are new, except for the case $m=1$,  which dates back to the work of C\'ordoba \cite{Cor1982} and Barrionuevo \cite{BarrPAMS}, and the case of $Z=\mathbb S^2 $, which is the main result of \cite{Dem12} by Demeter. In the latter case, we prove a more precise result improving on the logarithmic correction of \cite{Dem12}. To describe this improvement,  we introduce the following iterated logarithmic function; for integers $k,N\geq 1$ we set
\[
\log^{[1]}N\coloneqq \log(2+N),\qquad \log^{[k]}N\coloneqq \log(2+\log^{[k-1]}N).
\]
\begin{theorem}
\label{t:main:card}  For every $k\geq 1$ there exists a constant $\Theta_k>1$  such that
\begin{equation} 
\label{e:main:card}
\sup_{\substack{V\subset \mathbb S^2 \\ \#V \leq N^2}} \left\|\mathrm{M}_{V } \right\|_{L^2(\R^{3}) } \leq \Theta_k N^{\frac12} \sqrt{\log N}  \log^{[k]} N .
\end{equation}
\end{theorem}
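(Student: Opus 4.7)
The plan is to refine the polynomial partitioning scheme underlying Theorem~\ref{t:main:manifold} in the case $m=2$, $Z_{2}=\mathbb{S}^{2}$, replacing the $N^{\eta}$ loss by $\sqrt{\log N}\log^{[k]}N$. I would proceed by induction on $k$. The base case $k=1$ is handled by a crude bound of order $N^{1/2}(\log N)^{3/2}$, extracted by tracking each factor $N^{\eta}$ in the proof of Theorem~\ref{t:main:manifold} as a logarithmic loss; alternatively, Demeter's estimate already yields $N^{1/2}\log N$.

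For the inductive step $k-1\to k$, set $\phi(N)\coloneqq \sup\{\|\mathrm{M}_{V}\|_{L^{2}(\mathbb{R}^{3})}\colon V\subset \mathbb{S}^{2},\,\#V\leq N^{2}\}$ and assume the bound $\phi(N)\leq \Theta_{k-1}N^{1/2}\sqrt{\log N}\log^{[k-1]}N$. Given $V$, I would apply spherical polynomial partitioning with a polynomial $P$ of degree $D$ calibrated so that $\log D\sim \log^{[k-1]}N$. This produces $\lesssim D^{2}$ spherical cells, each containing at most $(N/D)^{2}$ elements of $V$, together with a wall $V_{\mathrm{wall}}\subset\{P=0\}\cap \mathbb{S}^{2}$ lying on a $1$-dimensional real algebraic variety of degree $\lesssim D$. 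Correspondingly, $\mathrm{M}_{V}\leq \mathrm{M}_{V_{\mathrm{wall}}}+\max_{j}\mathrm{M}_{V_{j}}$.

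The wall contribution is controlled via the sharp one-dimensional bound (C\'ordoba--Barrionuevo, or Theorem~\ref{t:main:manifold} with $m=1$ sharpened to the logarithmic form), giving $\|\mathrm{M}_{V_{\mathrm{wall}}}\|_{L^{2}(\mathbb{R}^{3})}\lesssim \sqrt{\log N}$, which is absorbed by the target. The inductive hypothesis applied to each $V_{j}$ yields per-cell bounds of the form $\Theta_{k-1}(N/D)^{1/2}\sqrt{\log N}\log^{[k-1]}N$. These per-cell bounds are assembled into a global estimate via an $L^{2}$-orthogonality argument adapted to the sphere: since distinct cells correspond to angularly separated caps, tubes with directions in distinct cells are almost orthogonal, allowing an inequality of the shape $\|\max_{j}\mathrm{M}_{V_{j}}\|_{L^{2}}\lesssim (\log D)\max_{j}\|\mathrm{M}_{V_{j}}\|_{L^{2}}$ up to lower-order terms. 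Substituting the calibration $\log D\sim \log^{[k-1]}N$ converts the factor $\log^{[k-1]}N$ into $\log^{[k]}N$, closing the induction at the cost of a bounded multiplicative increase of $\Theta_{k}$ over $\Theta_{k-1}$.

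The main obstacle will be this efficient $L^{2}$-orthogonality across the $\lesssim D^{2}$ cells: a naive union bound loses a full power of $D$, which would destroy the iterated-logarithmic improvement, whereas one needs a loss of at most polylogarithmic in $D$. Establishing the required inequality should rely on a C\'ordoba-type almost-orthogonality for maximal directional averages associated to angularly separated spherical caps, leveraging the curvature of $\mathbb{S}^{2}$ and the unit spatial scale of the averages. A secondary point is verifying that $\Theta_{k}$ does not blow up across the $k$ inductive steps, which reduces to a clean choice of constants at each stage and is expected to be straightforward.
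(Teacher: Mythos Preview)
Your plan has a genuine gap at the assembly step. The inequality you propose, $\|\max_j \mathrm{M}_{V_j}\|_{L^2}\lesssim (\log D)\max_j\|\mathrm{M}_{V_j}\|_{L^2}$, cannot hold: if it did, running your numerology with $D=\exp(\log^{[k-1]}N)$ would produce a bound $o(N^{1/2})$ as $N\to\infty$, contradicting the sharp lower bound $\|\mathrm{M}_V\|_{L^2}\gtrsim N^{1/2}$. The reason it fails is that the cells produced by polynomial partitioning are \emph{not} angularly separated caps; they are arbitrary connected components of $\mathbb{S}^2\setminus\{P=0\}$, and adjacent cells share boundary, so there is no curvature-based almost-orthogonality to invoke. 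The correct overlap count---obtained by intersecting a frequency band $\{v\in\mathbb{S}^2:|\xi\cdot v|<s\}$ with the cell decomposition---is $\Theta D$ (a great circle meets the zero set of a degree-$D$ polynomial in $\lesssim D$ points), which yields a loss of $D^{1/2}$ in the non-squared norm. That loss exactly cancels the gain $(N/D)^{1/2}$ versus $N^{1/2}$ from smaller cells, so your induction on $k$ does not close. In the paper the recursion is instead $K_N\leq \Theta E\bigl[(\log N)^3+K_{N/E}\bigr]$ for the squared single-annulus norm; the improvement over $K_N\sim N$ comes not from a sub-$\sqrt{E}$ assembly loss but from the \emph{additive} wall-and-bad-cluster term being sublinear in $N$. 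One then solves the recursion by choosing $E\sim N/(\log N)^3$ and iterating $k$ times, which is where the iterated logarithm actually appears.

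Two further points. First, the overlap argument only makes sense after the single-annulus reduction of Proposition~\ref{p:CWW} (Chang--Wilson--Wolff), which is also the source of the factor $\sqrt{\log N}$ in the final bound; working directly with $\mathrm{M}_V$ gives no frequency localization to exploit. Second, your wall estimate drops the degree dependence: a one-dimensional algebraic variety of degree $D$ on $\mathbb{S}^2$ may have up to $\sim D^2$ connected components, and the correct bound (Theorem~\ref{thm:curve}) carries a factor $D^{1/2}$, not $O(1)$. This $D^{1/2}$ is precisely why the additive term in the recursion is $\Theta E(\log N)^3$ rather than $\Theta(\log N)^3$, and it matters for the numerology.
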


As customary, we prove our estimates for a Fourier analogue of $\mathrm{M}_{V }$, namely the maximal directional multiplier operator $\mathrm{A}_{V }$ defined in  \eqref{e:fourierav} below, which dominates $\mathrm{M}_{V }$ pointwise on the cone of positive functions.  If $f$ has frequency support in the annulus $\{\xi \in \R^n:\,s<|\xi|<2s\}$, our proof yields the stronger estimate
\[
\sup_{\substack{V\subset \mathbb S^2 \\ \#V \leq N^2}} \left\|\mathrm{A}_{V } f \right\|_{L^2(\R^{3}) } \lesssim_k N^{\frac12}   \log^{[k]} N \, \|f\|_{L^2(\R^3)}
\] 
uniformly over $s>0$. Modifying the example discussed after the statement of Theorem~\ref{t:main:manifold}, namely testing the operator norm on a suitable  frequency cutoff and modulation of the indicator of the unit ball in $\R^{3}$, reveals that the $N$-dependence of the  latter estimate is sharp modulo the iterated logarithmic correction; see \cite{LacLi:tams} for details.

Theorem \ref{t:main:manifold} yields $L^2(\R^n)$-bounds for the Nikodym maximal function
\[
\mathrm{M}_{ Z_m,\delta} f(x) = \sup_{x\in T\in \mathcal T( Z_m,\delta)} \frac{1}{|T|} \int_{T} |f|
\]
where $ \mathcal T( Z_m,\delta)$ is the collection of tubes $T$ with length $1$ and $n-1$-dimensional cross section of width $0<\delta\ll 1$, where the long side oriented along some direction $v\in {Z_m}\cap \mathcal A_{n}(1) $. 

\begin{theorem} \label{t:nykodym} Let  $Z_m\subseteq \R^n$ be a real algebraic variety of dimension $1\leq m\leq n-1$. Then for all $\eta>0$ there is a constant  $\Theta=\Theta(Z_m,\eta)$ independent of $\,0<\delta\ll 1$, such that
\[
 \left\| {\mathrm{M}_{ Z_m,\delta} } \right\|_{L^2(\R^{n}) } \leq \Theta \delta^{-\frac{(m-1)}{2}{-}\eta}.
\]
\end{theorem}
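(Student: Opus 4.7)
The plan is to deduce Theorem \ref{t:nykodym} from the directional averaging bound \eqref{e:main:growth} of Theorem \ref{t:main:manifold} through a standard discretization of directions and a pointwise domination argument.

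I would first fix $0<\delta\ll 1$ and select a maximal $\delta$-separated subset $V=V_\delta\subset Z_m\cap\mathcal A_n(1)$. The $\delta$-covering number of an $m$-dimensional real algebraic variety of bounded complexity is $\lesssim\delta^{-m}$ by Wongkew-type volume estimates, i.e.\ the algebraic geometry input from \S\ref{sec:alggeom}; hence $\#V\lesssim\delta^{-m}$, and setting $N=\lceil C\delta^{-1}\rceil$ gives $\#V\le N^m$. For each $T\in\mathcal T(Z_m,\delta)$ oriented along $v$, one picks $v'\in V$ with $|v-v'|\lesssim\delta$, so that $T$ sits inside a tube $T'$ of width $C\delta$ and length $\lesssim 1$ oriented along $v'$. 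This reduces matters to estimating the Nikodym maximal function $\mathrm M^{\mathrm{Nik}}_{V,C\delta}$ restricted to directions in the net $V$.

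The crucial pointwise estimate is
\[
\mathrm M^{\mathrm{Nik}}_{V,C\delta}f(x)\;\lesssim\;\mathrm M_V\bigl(\psi_\delta*|f|\bigr)(x),
\]
where $\psi_\delta(y)=\delta^{-n}\psi(y/\delta)$ for a smooth bump $\psi$ on $\R^n$ with $\int\psi=1$ and $\psi\ge c\mathbf 1_{B_{1/2}(0)}$. To verify it, fix $v'\in V$ and decompose the convolution variable $w=sv'+y$ with $y\perp v'$: the mass of $\psi_\delta$ in the longitudinal variable $s$ yields a length-$\delta$ translation along $v'$ that is absorbed by slightly shrinking the interval of integration, while its mass in $y$ supplies precisely the $(n-1)$-dimensional transverse $\delta$-averaging needed to convert the 1-dimensional average $\langle\,\cdot\,\rangle_{v'}$ into an average over a $C\delta$-tube in direction $v'$.

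Combining the three steps and using Young's inequality $\|\psi_\delta*|f|\|_{L^2}\le\|f\|_{L^2}$, Theorem \ref{t:main:manifold} applied to $V$ yields
\[
\|\mathrm M_{Z_m,\delta}\|_{L^2(\R^n)}\;\lesssim\;\|\mathrm M_V\|_{L^2(\R^n)}\;\lesssim\;N^{\frac{m-1}{2}+\eta}\;\lesssim\;\delta^{-\frac{m-1}{2}-\eta},
\]
with implicit constant depending on $Z_m$ and $\eta$, and the arbitrariness of $\eta>0$ gives the claim.

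The main obstacle is the covering estimate $\#V\lesssim\delta^{-m}$, which relies on the quantitative definition of $m$-dimensionality for a real algebraic variety from \S\ref{sec:alggeom}; in particular, one needs the constant to be controllable by the explicit algebraic data of $Z_m$, so as to be consistent with the dependence of $\Theta(Z_m,\eta)$ in Theorem \ref{t:main:manifold}. The pointwise reduction to the directional maximal operator and the final convolution step are soft.
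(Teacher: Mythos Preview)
Your proposal is correct and follows essentially the same route as the paper: choose a $\delta$-net $V\subset Z_m\cap\mathcal A_n(1)$ with $\#V\lesssim\delta^{-m}$, reduce the Nikodym operator $\mathrm M_{Z_m,\delta}$ to the directional maximal operator $\mathrm M_V$ by a pointwise domination, and then apply Theorem~\ref{t:main:manifold}. The paper simply cites this reduction as well-known (referring to \cite{Cor1982,Dem12}), whereas you spell out the convolution argument $\mathrm M^{\mathrm{Nik}}_{V,C\delta}f\lesssim \mathrm M_V(\psi_\delta*|f|)$ explicitly; the content is the same.
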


\begin{proof} Let $\delta>0$ and $V\subset Z_m$ be a $\delta$-net with $\# V \lesssim \delta^{-m}$. A well-known reduction \cite{Cor1982,Dem12} yields that
 \[ \left\| {\mathrm{M}_{ Z_m,\delta} } \right\|_{L^2(\R^{n}) } \lesssim  \left\| {\mathrm{M}_{ V } } \right\|_{L^2(\R^{n}) } \]
so that the claim follows by an application of Theorem \ref{t:main:manifold}.
\end{proof}
The exponent in Theorem \ref{t:nykodym} is in general optimal up to the arbitrarily small $\eta$-loss. {Indeed, notice that the example provided after the statement of Theorem~\ref{t:main:manifold} with $N\sim 1/\delta$ also applies to the bounds of Theorem~\ref{t:nykodym} above. This is because the Nikodym maximal operator is defined with respect to $\delta$-tubes; one then can reduce the supremum in the definition of the Nikodym maximal function to be taken over a subset of $\delta$-tubes pointing along a $\delta$-net on the unit sphere; see \cite[p. 718]{Dem12} for the details of this argument.}

 This result appears to be new in all cases except for $m=n-1$, which is a classical estimate of C\'ordoba \cite{Cor77}, and $m=1$ \cite[Theorem B]{Cor1982}, also due to C\'ordoba.

The converse direction of the implication leading from  Theorem~\ref{t:main:manifold} to  Theorem~\ref{t:nykodym}, namely obtaining sharp bounds for the \emph{thin} averages of \eqref{e:max} from sharp bounds for \emph{thick} averages over tubes, as the ones involved in the definition of the Nikodym maximal operator, is not feasible as the averages \eqref{e:max} are more singular; see also \cite{Dem12}. An exception to this heuristic is, in two dimensions, the case of uniformly distributed directions on the circle $\SS^1$. In this setting, the difference between thin and thick averages is a square function which, at least in $L^2(\R^2)$, can be easily treated by overlap considerations; see also the discussion below in \S\ref{sec:backg}.

\subsection{Motivation} In addition to the intrinsic relevance of Theorems \ref{t:main:manifold}, \ref{t:main:card} and \ref{t:nykodym} to  the realm of classical differentiation theory, this paper finds  motivation  within  a more general program, aimed at  understanding  directional maximal and singular integral operators in higher dimensions, \emph{under no particular structural assumptions}. This is in contrast with  previous results in the literature, for instance   \cites{BarrPAMS,Barr93,Cor1982}, which are obtained   for sets of directions complying with some type of geometric configuration. In the papers cited above, the structure is that of uniform distribution of the discrete set  on the sphere. On the other hand, in e.g. \cite{PR2} and references therein, the set of directions is allowed to be infinite but is assumed to be lacunary. In both cases, these   conditions  allow for an effective splitting into subsets which are either of controlled cardinality, as in the case of uniform distribution, or exhibit self-similar behavior as in the case of lacunary directions. Both families of results mentioned above, as well as their two-dimensional counterparts as in \cite{Alf} and \cites{ASV1,ASV}, show that a very effective way to handle directional operators is the divide and conquer technique. 

For arbitrary sets of directions in higher dimensions this technique presents a new challenge: an efficient partitioning of a non-uniformly distributed, not naturally ordered set of directions on $\mathbb S^2$, for instance,   is far from obvious. Inspired by  recent developments   in harmonic analysis, most notably the improvements on the restriction problem by Guth \cites{G1,Guth2},  our proofs are based on a polynomial partitioning scheme adapted to maximal directional operators. We establish a novel strategy for handling these operators in a very general setting: the directions lie on an algebraic variety of arbitrary codimension in $\R^n$, and essentially no other structure is assumed. To exemplify the intrinsic gain that is brought by this perspective, we mention  Theorem \ref{t:main:card},    which is about directions on the sphere $\mathbb S^2$ and  where no \emph{a-priori} relevant algebraic structure is present. The improvement over \cite{Dem12} is obtained by   partitioning our directions into connected components of the complement of the zero set of a polynomial $P$ on the sphere, each containing a roughly constant number of points, and with favorable overlap properties ---not too many cells intersecting  each given hyperplane--- and using the algebraic structure, see the dimension 1 estimate of Theorem \ref{thm:curve}, to handle the contribution of those directions falling on the partitioning zero set. 

We believe that our methods can be furthered to obtain $L^p$-estimates for more general singular and maximal averages in higher dimensions, when there is no  structure in the set of directions.    The natural point of view within the polynomial method is that of systematically attacking the cases of directions lying on algebraic varieties in $\R^n$ of arbitrary codimension.  As an  example of  relevant open question,  it is worth noting that no nontrivial bound is currently available for  the  \emph{multi-scale directional maximal operator} along directions lying on a polynomial  subvariety of $\SS^{n-1}$,  without any additional structure.

\subsection{Background}\label{sec:backg} The study of directional maximal  and singular integrals initially arose as a natural companion to questions on Kakeya-type maximal operators, Bochner-Riesz multipliers, and the conjectures of Zygmund and Stein for corresponding objects defined along 2-dimensional vector fields: we provide a short overview with particular focus on the $L^2$-theory. In \cite{Cor77} C\'ordoba showed that the maximal average defined with respect to rectangles in the plane, of fixed eccentricity $\delta$, is bounded on $L^2(\R^2)$ with an operator norm of the order $(\log\delta^{-1})^{\frac12}$; the bound above is best possible as revealed by a counterexample constructed by means of the Kakeya set. We note here that, in two dimensions, this operator is essentially equivalent to a maximal averaging operator along uniformly $\delta$-spaced directions in $\SS^1$, their difference being that of an easy to treat square function; it is also of some importance to highlight that this comparison is not as readily available in higher dimensions, nor in the case of more general algebraic varieties in place of the sphere. Soon after the result of \cite{Cor77}, Str\"omberg, \cite{Stromberg}, showed that the $L^2(\R^2)\to L^{2,\infty}(\R^2)$ norm of the maximal average with respect to rectangles pointing in a uniformly distributed set of $N$ directions in $\SS^1$, without any restriction on their eccentricity, is of the order $(\log N)^\frac12$, and this is best possible. The numerology here is $\delta=N^{-1}$. These results were generalized by Katz in \cites{Katz,KB}, where the author proved the same sharp bound for the $L^2(\R^2)\to L^{2,\infty}(\R^2)$ bound for the operator norm of the maximal average with respect to an \emph{arbitrary} set of $N$ directions in $\SS^1$. We note here that $L^2$ is the critical $L^p$-space for the Kakeya, or Nikodym maximal operators in $\R^2$, while in $\R^n$ the critical exponent is $p=n$; see \cite{Tao}.

Parallel to the results above was the investigation of the maximal averages in the plane given by a set of directions which is \emph{infinite}, but possesses certain arithmetic-geometric structure. Collectively Str\"omberg, \cite{Strlac}, R.\ Fefferman and C\'ordoba, \cite{CorFeflac}, Nagel, Stein, and Wainger, \cite{NSW}, and Sj\"ogren and Sj\"olin, \cite{SS}, proved that that such maximal operators are bounded on $L^p(\R^2)$ whenever the set of directions is a lacunary set of finite order.   The striking result of Bateman, \cite{Bat}, characterized lacunary sets of finite order in the plane as the only sets that give rise to bounded directional maximal operators on $L^p(\R^2)$ for some (equivalently any) $p\in(1,\infty)$.

In the higher (ambient space $\R^n$, $n\geq 2$) dimensional setting, the picture is far  from complete. We note that some special cases of higher-dimensional lacunary sets of directions were introduced by Nagel, Stein, and Wainger in \cite{NSW}, and by Carbery in \cite{Carbery}, where the authors showed the boundedness of the corresponding maximal operator. However, it was only recently that Parcet and Rogers, \cite{PR2}, gave a general definition of lacunary sets of directions in any dimension and proved the boundedness of maximal operators along such directions in $L^p(\R^n)$, for all $p\in (1,\infty)$ and all $n\geq 2$. Recently in \cite{DPP2}, the authors of the present paper obtained the best possible bound for the Hilbert transform along sets of directions in $\SS^2$ that are lacunary in the sense of \cite{PR2}. 

For averaging operators with respect to arbitrary sets of directions in dimensions $n\geq 2$ Demeter has studied the case of maximal  directional averages along sets $V\subset \SS^2$ and showed $L^2$-bounds of the order $N^\frac12(\log N)$ for the case that $\#V=N^2$. This result is improved by a $\sqrt{\log N}$, modulo iterated logarithmic losses, in Theorem~\ref{t:main:card} of the present paper. Also relevant for us is the result of C\'ordoba, \cite{Cor1982}, for equispaced directions lying on a smooth parametrizable curve in $\SS^{n-1}$, proving $L^2$-bounds of the order $(\log N)^2$, for the corresponding multiscale maximal operator. This bound was improved by Barrionuevo in \cite{BarrPAMS} to the best possible order $\log N$. Finally, Barrionuevo in \cite{Barr93}, has proved almost optimal $L^2$-bounds in arbitrary dimension for the case of $N$ uniformly distributed directions in $\SS^{n-1}$.

\subsection{Techniques} The bulk of the paper is devoted to the proofs of Theorems \ref{t:main:manifold} and \ref{t:main:card}. These proofs have a similar coarse structure. A finite set of directions $V$ lying on an algebraic variety $Z\subset \R^n$ of dimension $m$ (the sphere in Theorem \ref{t:main:card}) is partitioned by the zero set of a polynomial $P$ into cells $\mathsf{C}$, each containing at most a fixed portion of the original points, plus points lying exactly on (or sufficiently close to) the zero set of $P$. These lie on an algebraic variety of dimension $\mu=m-1$ and their contribution is estimated by induction, when $\mu>1$, or by direct methods, when $\mu=1$;  see for example Theorem \ref{thm:curve}. 

 For the cellular part, the estimate is based on the observation that the directions contributing at a given frequency point $\xi$ are localized on a fattening of the hyperplane perpendicular to $\xi$. If, at first approximation, we ignore the complications brought by the fattening, we   can estimate how many of the $V\cap \mathsf{C}$ operators  overlap at $\xi$ by counting, via Milnor-Thom-type theorems, in how many  components the variety $\xi^\perp \cap Z$ is split by the zero set of $P$. When $Z=\mathbb S^2$ for instance,  $\xi^\perp \cap Z$ is a circle, and the number of zeros of $P$ on the circle is controlled by the degree of $P$.
 In the realistic case, we are dealing with fat hyperplanes and the additional term consisting  of those cells staying close to a $(n-1)$-dimensional hyperplane is handled by comparison with directions lying on an algebraic subvariety of the hyperplane: an approximate projection of $Z$. This term is dealt via induction on the ambient space dimension $n$.

Polynomial partition on subvarieties of  $\R^n$ is in general challenging and an optimal procedure is not yet completely understood, especially in codimension 2 and higher: see   the articles \cite{BaSo,FPS+,MP,Zahl2} and references therein for recent developments. Motivated by the study of the Fourier restriction operator in higher dimensions, Guth \cite{Guth2} introduced a polynomial partitioning scheme based on \emph{transverse complete intersections} (TCI), namely algebraic varieties of the classes $\mathcal Z_{m,n}^\times(D)$ defined in Section \ref{sec:alggeom}, which in effect allows one to partition densities supported on $m$-varieties as if they were on $\R^m$, by keeping some uncertainty in the partitioning polynomials. Similar procedures have been used in \cite{OW}. We adapt the scheme of \cite{Guth2} to our context of directional operators, the final result in this sense being Proposition \ref{prop:polpart}.   The first main additional difficulty, compared to \cite{Guth2,OW}, is that we are partitioning points and not densities.  We tackle this issue by replacing points from the initial polynomial wall with  nearby points, sitting on a nicer wall of the TCI-type;  this replacement can be made harmless for our quantitative estimates. The second is that the approximate projection procedure hinted at above does not preserve TCIs. We remedy this by covering an arbitrary algebraic variety of dimension $m$ with an arbitrarily small neighborhood of a  union of a controlled number of TCIs of dimension no more than $m$; see Proposition \ref{p:smoothapprox}.

Interestingly, the recent article by Katz and Rogers \cite{KatzRogers} solves a conjecture of Guth \cite{Guth2} concerning the maximal number of $\delta$-separated $\delta$-tubes contained in the $\delta$-neighborhood of an algebraic subvariety of $\R^n$. While their estimate is somewhat dual to that of Theorem \ref{t:nykodym}, none of the two can be promptly reduced to the other one. However, remarkably, the proof techniques of the main result of \cite{KatzRogers} share some aspects with our arguments of Section \ref{sec:alggeom}: in particular, the authors of \cite{KatzRogers} appeal to Tarski's quantifier elimination principle within the category of semialgebraic sets, combined with Gromov's complexity estimate. In contrast, inspired by arguments in \cite{MP} by Matou{\v s}ek and Patakova, we construct (approximate) projections of algebraic varieties by explicitly computing elimination ideals  of the original variety, via Gr\"obner bases and the related quantifier elimination theorem; see  \cite{CLO2,CLO} and references therein. This technique allows us to work within the class of algebraic sets.

\subsection{Structure}Section \ref{sec:antools} contains some preliminary analytic tools and a version of polynomial partition on $m$-dimensional polynomial subvarieties $\R^n$ adapted to our problem, and involving directional averages; see Proposition \ref{prop:polpart}. The proof of Theorem~\ref{t:main:card} is given in Section~\ref{sec:3dgeneraldirs}. Section \ref{sec:alggeom} contains several definitions and tools from algebraic geometry. The proof of Proposition \ref{prop:polpart}, which will find use in  the proof of Theorem~\ref{t:main:manifold}, can also be found in Section~\ref{sec:alggeom}. Finally, the proof of Theorem~\ref{t:main:manifold} is given in the final Section~\ref{sec:mainpf}.

\subsection*{Acknowledgments} The authors want to express their gratitude to the reviewers for their careful reading and suggestions which contributed to improvements in the presentation. 

\section{Preliminaries} \label{sec:antools}

\subsection{Notation and recurring definitions}\label{sec:not} By
$\Theta_{\alpha_1,\ldots,\alpha_j}$ we denote a positive constant, possibly depending on the parameters $\alpha_1,\ldots,\alpha_j$ only, which may differ  at each occurrence. We also write
\[
\begin{split}
&
A \lesssim_{\alpha_1,\ldots,\alpha_j} B \iff A \leq \Theta_{\alpha_1,\ldots,\alpha_j} B, \qquad
\\ &A \sim_{\alpha_1,\ldots,\alpha_j} B \iff A \lesssim_{\alpha_1,\ldots,\alpha_j} B, \, B \lesssim_{\alpha_1,\ldots,\alpha_j} A .
\end{split}
\]
{For a bounded operator $T:L^2(\R^n)\to L^2(\R^n)$ we use the shorthand notation $\|T\|_{L^2(\R^n)}\coloneqq \|T:L^2(\R^n)\to L^2(\R^n)\|$.} If $v\in V\subset \R^n\setminus \{0\}$ we write $v'\coloneqq v/|v|$ and $V'\coloneqq\{v':\, v\in V\}\subset \mathbb S^{n-1}$.
We use the notation
\[
\mathrm{dist}(U,V) \coloneqq \sup_{u\in U} \inf_{v\in V} |u-v|
\]
for the (asymmetric) distance between $U,V\subset \R^n$.
We will be working with  (frequency and directional)  annuli: for $R\geq 1,$
\[\mathcal A_{n}(R)\coloneqq\big\{\xi \in \R^{n}:\,R^{-1}\leq |\xi|<2R\big\}.\]
We define the  frequency bands  
\begin{equation}\label{e:bandsR}
R_{\xi,s}\coloneqq\left\{\eta \in  \mathcal A_n(1):\,|\xi\cdot \eta|<s|\eta|\right\}, \qquad \xi \in \R^{n}\setminus \{0\},\quad s>0.
\end{equation} 
Let $A_{v,s}$ be the operator defined in \eqref{e:fourierav} below and $S_1$ be a smooth frequency cutoff adapted to the annulus $\mathcal A_n(1)$. Abusing notation we will write $\widehat{S_1}$ for the (smooth) Fourier multiplier of the operator $S_1$; we will be using repeatedly that the Fourier  support of $A_{v,s} \circ S_1$ is contained in the (fat) band $R_{v,s}$.
For $R\subset \mathbb R^{n}$, we write
\begin{equation}
\label{e:notationR}
f_R(x) \coloneqq \int_R \widehat{f}(\xi) \e^{ix\cdot \xi }\, \d \xi. 
\end{equation}
to indicate the rough frequency restriction to $R$.

\subsection{Single scale averages} Our tools will be largely Fourier analytic in nature: in fact, we will work with  maximal operators obtained by replacing the rough averages in \eqref{e:max} with the smooth directional Fourier multipliers 
\begin{equation} \label{e:fourierav}
\mathrm{A}_{V,s} f \coloneqq \sup_{v\in V} |A_{v,s} f|, \qquad  A_{v,s} f(x) \coloneqq \int_{\R^{n }} \widehat f(\xi) \psi \left(\frac{\xi \cdot v}{s}\right) \e^{ix\cdot\xi}\, \d \xi
\end{equation}
with  $x\in\R^n$ and $s>0$. 
Throughout,   $\psi:\R\to \R$ will be the Fourier transform of an even nonnegative Schwartz function $\Psi:\R\to \R$ with the following properties: 
\[
\mathrm{supp}\, \psi \subset [-2^{-10},2^{-10}], \qquad  \Psi(t)\geq \cic{1}_{(-1,1)}(t).
\]
This smooth function  will remain fixed throughout the paper so we suppress any implicit dependence on the choice of $\psi$.
We are chiefly interested in the case $s=1$, where we suppress the subscript and simply write $A_{v}$, $\mathrm{A}_V$ in place of $A_{v,1}$, $\mathrm{A}_{V,1}$.  By scaling $f$, we see that
\begin{equation}
\label{e:scale}
\|\mathrm{A}_{V,s}\|_{L^p(\R^n)} = \|\mathrm{A}_{V}\|_{L^p(\R^n)} \qquad \forall s>0,\quad 0<p\leq \infty.
\end{equation} 
This scaling invariance is destroyed when restricting the operators $\mathrm{A}_{V,s}$ to act on functions with  frequency support in $\mathcal A_n(1)$, and for this reason we work with a more general scale parameter below; see for instance Proposition \ref{p:CWW}.
 
With our choice of $\psi$, we have the following  comparison principle; the  upper bound is immediate by pointwise comparison for $f\geq 0$, while the lower bound follows from \eqref{e:lemmainter2} below, and scaling.
\begin{lemma} \label{l:roughsmooth} Let  $V\subset \R^n$.  Then
\[
 \begin{split}
  \left\|\mathrm{M}_{V}  \right\|_{L^2(\R^{n })} \sim_n \left\|\mathrm{A}_{V}  \right\|_{L^2(\R^{n })}.
\end{split}
\]
\end{lemma}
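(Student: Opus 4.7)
The plan is to derive both inequalities of $\|\mathrm{M}_V\|_{L^2}\sim_n \|\mathrm{A}_V\|_{L^2}$ from a pointwise comparison between the spatial kernels of $A_v$ and of the rough average $\langle\cdot\rangle_v$. Inverse Fourier transform applied to the multiplier $\xi\mapsto \psi(\xi\cdot v)$, which depends only on the one-dimensional projection $\xi\cdot v$, represents $A_v$ as a one-dimensional convolution along the $v$-axis against the even, nonnegative Schwartz function $\Psi$; namely
\[
A_v f(x) = \int_{\R}\Psi(t)\, f(x-tv)\,\d t.
\]
Each inequality then reduces to a pointwise comparison of $\Psi$ with a symmetric indicator function.

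For the direction $\|\mathrm{M}_V\|_{L^2}\lesssim_n \|\mathrm{A}_V\|_{L^2}$ I would use the lower bound $\Psi\geq \ind_{(-1,1)}$: this yields $\langle |f|\rangle_v\leq \tfrac12 A_v|f|$ pointwise, and since $\Psi\geq 0$ the operator $A_v$ preserves positivity, whence $\mathrm{M}_V f(x)\leq \tfrac12\, \mathrm{A}_V|f|(x)$. Passing to $L^2$-norms and using $\||f|\|_{L^2}=\|f\|_{L^2}$ concludes this half.

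The reverse direction $\|\mathrm{A}_V\|_{L^2}\lesssim_n\|\mathrm{M}_V\|_{L^2}$ relies on the Schwartz decay of $\Psi$. From $|\Psi(t)|\leq C_N(1+|t|)^{-N}$ and a dyadic decomposition of the convolution integral in $|t|$, one obtains the pointwise bound
\[
|A_v f(x)|\leq C_N\sum_{k\geq 0}2^{-k(N-1)}\,\mathrm{M}_{V,2^k} f(x),
\]
where $\mathrm{M}_{V,r}$ denotes the scale-$r$ analogue of $\mathrm{M}_V$. Taking the supremum over $v\in V$ on the left (the right-hand side being already $v$-independent), applying Minkowski in $L^2(\R^n)$, and invoking the elementary scaling identity $\|\mathrm{M}_{V,r}\|_{L^2}=\|\mathrm{M}_V\|_{L^2}$, valid for every $r>0$, the geometric series in $k$ converges once $N\geq 3$ and produces the claim with a purely dimensional constant. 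The whole argument is routine once the convolution representation of $A_v$ is in hand; the only mild point of attention, which is presumably the content of the referenced estimate \eqref{e:lemmainter2}, is the justification that absolute values pass under the convolution defining $A_v$, which is immediate here because $\Psi$ is nonnegative.
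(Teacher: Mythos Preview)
Your proof is correct and follows essentially the same approach as the paper: the upper bound via the pointwise inequality $\Psi\geq\ind_{(-1,1)}$, and the lower bound via dyadic decomposition of the one-dimensional Schwartz kernel $\Psi$ combined with the scaling identity $\|\mathrm{M}_{V,r}\|_{L^2}=\|\mathrm{M}_V\|_{L^2}$. Your guess about the content of \eqref{e:lemmainter2} is slightly off, though: that estimate is not about passing absolute values inside the convolution (which, as you note, is trivial here since $\Psi\geq 0$), but rather a more elaborate kernel bound for the composition $A_{u,s}\circ S_1$ when $u$ is merely close to a reference direction; your direct one-dimensional argument is in fact the cleaner specialization of that technique to the case $u=v$ with no frequency cutoff.
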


\subsection{Decoupling of frequency annuli and localization} Let   ${S_1}$ be a smooth Littlewood-Paley radial cutoff to the annulus $\mathcal A_{n}(1)$. Using the Chang-Wilson-Wolff inequality as in \cite{Dem,DPP2,DPP,GHS}, and scaling, we are able to reduce our maximal estimates to functions with frequency support in $\mathcal A_{n}(1)$, as recorded in the following proposition. 

\begin{proposition} \label{p:CWW}  Let    $V\subset  \R^n$ be a finite subset. Then   
\[
  \left\|\mathrm{A}_{V}  \right\|_{L^2(\R^{n})} \lesssim_n \sqrt{\log  \#V  } \sup_{s>0} \left\|\mathrm{A}_{V,s}\circ {S_1}  \right\|_{L^2(\R^{n})}.
\]
\end{proposition}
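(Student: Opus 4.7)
\textbf{Proof plan for Proposition \ref{p:CWW}.} The strategy is a standard Littlewood--Paley decomposition combined with the Chang--Wilson--Wolff (CWW) square-function inequality to pay the $\sqrt{\log \#V}$ factor, followed by a scaling reduction to the model operator $\mathrm{A}_{V,s}\circ S_1$. Concretely, let $S_j$ denote the smooth radial Littlewood--Paley projection to the annulus $\{|\xi|\sim 2^j\}$, so that $S_j$ is just the scale-$2^j$ dilate of $S_1$ and $f=\sum_{j\in\mathbb Z}S_j f$ (up to the usual technicalities). Since $A_v$ is a Fourier multiplier it commutes with $S_j$; hence for each fixed $v\in V$ one has $A_v f=\sum_j A_v S_j f$, with each summand frequency-localized to $\{|\xi|\sim 2^j\}$.

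First I would invoke the CWW-type inequality as used for instance in \cite{Dem,DPP2,DPP,GHS}: for any finite family $\{F_v\}_{v\in V}$ with $F_v=\sum_j S_jF_v$ one has
\[
\Bigl\|\sup_{v\in V}|F_v|\Bigr\|_{L^2(\R^n)}\lesssim_n \sqrt{\log \#V}\,\Bigl\|\sup_{v\in V}\Bigl(\sum_{j}|S_jF_v|^2\Bigr)^{1/2}\Bigr\|_{L^2(\R^n)}.
\]
Applied to $F_v=A_v f$, and using $S_jF_v=A_v S_j f$, this yields
\[
\|\mathrm{A}_V f\|_{L^2}\lesssim_n \sqrt{\log \#V}\,\Bigl\|\Bigl(\sum_j|\mathrm{A}_V S_j f|^2\Bigr)^{1/2}\Bigr\|_{L^2},
\]
after the trivial pointwise bound $\sup_v(\sum_j|A_vS_jf|^2)^{1/2}\leq(\sum_j|\mathrm{A}_V S_j f|^2)^{1/2}$.

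Next I would square the right-hand side and exchange sum and integral, obtaining $\sum_j\|\mathrm{A}_V S_j f\|_{L^2}^2$. The final step is a scaling identity: writing $g(x)=f(2^j x)$, a change of variables inside the multiplier integral in \eqref{e:fourierav} shows $A_v f(x)=A_{v,2^j} g(2^{-j}x)$, so that taking the supremum over $v$ and the $L^2$-norm gives
\[
\|\mathrm{A}_V S_j f\|_{L^2}\leq \|\mathrm{A}_{V,2^j}\circ S_1\|_{L^2\to L^2}\,\|S_j f\|_{L^2}\leq \Bigl(\sup_{s>0}\|\mathrm{A}_{V,s}\circ S_1\|_{L^2\to L^2}\Bigr)\|S_j f\|_{L^2},
\]
where we used that $S_j f$ has frequency support in $\{|\xi|\sim 2^j\}$, which after rescaling falls inside the support of $\widehat{S_1}$. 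Summing in $j$ and applying Littlewood--Paley orthogonality $\sum_j\|S_j f\|_{L^2}^2\lesssim_n\|f\|_{L^2}^2$ closes the estimate.

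The only delicate point is the correct deployment of the CWW inequality with the supremum in $v$ placed \emph{outside} the square function; this is exactly the maximal version already invoked in the references cited in the excerpt, so no genuinely new input is required. The rest is bookkeeping: checking that the scaling identity for $A_v$ is exact and that $S_1$ can be absorbed harmlessly, using that $\widehat{S_1}\equiv 1$ on a neighborhood of the annulus where $\widehat{S_j f}$ lives after rescaling.
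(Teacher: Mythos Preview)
Your proposal is correct and follows exactly the approach the paper indicates: the paper does not give a detailed proof of Proposition~\ref{p:CWW} but simply states that it follows ``using the Chang--Wilson--Wolff inequality as in \cite{Dem,DPP2,DPP,GHS}, and scaling,'' which is precisely the two-step outline (CWW to pay $\sqrt{\log\#V}$, then rescale each annular piece to $\mathrm{A}_{V,s}\circ S_1$) that you have spelled out. Your caveat about needing the maximal-in-$v$ form of the CWW square-function inequality is well placed and is indeed the version established in the cited references; the remaining steps (scaling identity for $A_{v,s}$, absorbing $S_j$ into $S_1$ after dilation, and Littlewood--Paley orthogonality in $L^2$) are routine and correctly handled.
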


The main advantage of the reduction of Proposition \ref{p:CWW} is that, in addition to being Fourier localized, the averages  \eqref{e:fourierav} also enjoy a localization property with respect to the direction $v$, in the sense of the next lemma. {Below we denote by $\M$ the usual Hardy-Littlewood maximal function in $\R^n$
\[
\M f(x)\coloneqq \sup_{r>0} \frac{1}{r^n}\int_{|t|\leq r}|f(x-t)|\,\d t,\qquad x\in\R^n. 
\]}

\begin{lemma}\label{lem:closeness} Let  $s>0$ and $0<a<2^{3}$. Let $U,V\subset \mathcal A_n(2)$ have the property that
\begin{equation} \label{e:red:closeness}
\mathrm{dist}\big(U',V'\big) <as .
\end{equation}
where $U'\coloneqq \{u/|u|:\,u\in U\}$ and similarly for $V$. Then 
\[
\left\| \mathrm{A}_{U,s}\circ {S_1}\right\|_{L^2(\R^n)} \lesssim_n {
\max\big(  \left\| \mathrm{A}_{V}\right\|_{L^2(\R^n)}, \left\| \M\right\|_{L^2(\R^n)}\big)\eqsim  \left\| \mathrm{A}_{V}\right\|_{L^2(\R^n)}.
}
\]
\end{lemma}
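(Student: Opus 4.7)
For each $u\in U$, select $v(u)\in V$ with $|u'-v(u)'|<as$, possible by \eqref{e:red:closeness}. After rescaling $v(u)\mapsto (|u|/|v(u)|)v(u)$---which, by the homogeneity of the multiplier $\xi\mapsto\psi(\xi\cdot v/r)$ in the pair $(v,r)$, only modifies the effective scale of the operator $A_{v(u),\cdot}$ by a bounded factor in $[1/8,8]$, a harmless adjustment since $|u|,|v(u)|\in[1/2,4]$---we may assume $|u|=|v(u)|$, and hence $|u-v(u)|<4as$.

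The plan is a multiplier factorization. Choose a large constant $C=C(n,a)>1$. On the Fourier support of $\psi(\xi\cdot u/s)\widehat{S_1}(\xi)$, contained in the slab $\{|\xi|\sim 1,\,|\xi\cdot u|\le 2^{-10}s\}$, a direct computation gives $|\xi\cdot v(u)/(Cs)|\le (2^{-10}+8a)/C$; for $C$ sufficiently large this falls inside a neighborhood of $0$ on which $\psi$ is bounded below by some $c_0>0$, since $\psi(0)=\int\Psi>0$ and $\psi$ is continuous. One then has the identity
\[
\psi\!\left(\tfrac{\xi\cdot u}{s}\right)\widehat{S_1}(\xi)\;=\;M_u(\xi)\,\psi\!\left(\tfrac{\xi\cdot v(u)}{Cs}\right),\qquad M_u(\xi):=\frac{\psi(\xi\cdot u/s)\widehat{S_1}(\xi)}{\psi(\xi\cdot v(u)/(Cs))},
\]
where $M_u$ is smooth and compactly supported, with derivatives satisfying anisotropic bounds of order $s^{-1}$ along $u$ and $O(1)$ perpendicularly. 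Consequently $M_u^\vee$ has tube-shaped Schwartz decay and $\|M_u^\vee\|_{L^1(\R^n)}\lesssim_{n,a}1$ uniformly in $u$, $v(u)$, and $s$, yielding the operator identity $A_{u,s}\circ S_1=M_u^\vee * A_{v(u),Cs}$.

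Taking absolute values, applying $|A_{v(u),Cs}f|\le\mathrm A_{V,Cs}f$ pointwise, and controlling the supremum over $u\in U$ of the tube-shaped kernels $|M_u^\vee|$ by a single $L^1$-normalized radial Schwartz majorant $\Phi$---made possible by pairing the unit-scale perpendicular smoothing inherited from $\widehat{S_1}$ with the $u$-axial tube of length $s^{-1}$---one arrives at the pointwise estimate
\[
\mathrm A_{U,s}(S_1 f)(x)\;\lesssim_n\;(\Phi*\mathrm A_{V,Cs}f)(x)\;\lesssim_n\;\mathrm{M}[\mathrm A_{V,Cs}f](x),
\]
where $\mathrm{M}$ is the Hardy--Littlewood maximal function. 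The conclusion then follows from the $L^2$-boundedness of $\mathrm{M}$ and the scaling identity \eqref{e:scale}, which gives $\|\mathrm A_{V,Cs}\|_{L^2(\R^n)}=\|\mathrm A_V\|_{L^2(\R^n)}$.

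I expect the main obstacle to be the envelope step $\sup_{u}|M_u^\vee|\lesssim\Phi$: as the tube axis rotates with $u$, a naive pointwise supremum yields a Kakeya-type kernel whose $L^1$-norm is not uniformly bounded in $s$. Overcoming this requires using the frequency localization to $\mathcal A_n(1)$ in an essential way, effectively absorbing the perpendicular unit-scale smoothing into the input and reducing the envelope to a fixed radial bump.
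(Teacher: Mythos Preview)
Your factorization $A_{u,s}\circ S_1 = M_u^\vee * A_{v(u),Cs}$ and the uniform bound $\|M_u^\vee\|_{L^1}\lesssim_n 1$ are both correct; these are essentially the kernel estimates the paper also proves. The gap, which you yourself flag, is the envelope step, and it is fatal as stated. No radial $\Phi\in L^1$ can dominate $\sup_u |M_u^\vee|$ uniformly in $s$: each $M_u^\vee$ is an $L^1$-normalized tube of length $\sim s^{-1}$ and unit width along $u'$, so if $U'$ is merely $s$-dense on a cap one has $\sup_u |M_u^\vee(x)|\gtrsim s$ for all $|x|\lesssim s^{-1}$, forcing $\|\Phi\|_{L^1}\gtrsim s^{1-n}\to\infty$. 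Your proposed fix, ``absorbing the perpendicular unit-scale smoothing into the input,'' cannot work here because the input to $M_u^\vee *$ is $A_{v(u),Cs}f$, which carries no annular frequency localization---you have already stripped $\widehat{S_1}$ off into $M_u$.

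The paper's proof avoids the envelope problem by \emph{not} factoring through $A_{v(u),Cs}$. It bounds the full kernel $K_{u,s}=(A_{u,s}\circ S_1)^\vee$ directly by a dyadic sum of $L^1$-normalized tubes $\widetilde{\cic 1}_{B_k}$ oriented along $v(u)'$, giving the pointwise estimate
\[
|A_{u,s}\,g|\;\lesssim\;\sum_{k\ge 1}2^{-kn}\,\mathrm{M}\big(\langle |g|\rangle_{v(u)',\,2^{-k}s}\big),\qquad g=S_1 f.
\]
The crucial point is that the directional average $\langle |g|\rangle_{v(u)',\sigma}$ sits \emph{inside} the Hardy--Littlewood maximal function, so taking $\sup_u$ simply replaces it by $\mathrm{M}_{V',\sigma}\,g$, yielding $\|\mathrm{A}_{U,s}\circ S_1\|\lesssim\sup_\sigma\|\mathrm{M}_{V',\sigma}\|\sim\|\mathrm{A}_V\|$. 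If instead you try to rescue your route by decomposing $|M_u^\vee|$ into tubes along $v(u)'$ \emph{after} the factorization, you obtain $\mathrm{M}\circ\mathrm{M}_{V'}$ acting on $\mathrm{A}_{V,Cs}f$ rather than on $g$, and the final bound becomes $\|\mathrm{A}_V\|^2$---strictly weaker than the lemma.
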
 

\begin{proof}  Below all the implicit constants in the almost inequality sign may depend on $n$ only, and may differ at each occurrence. The approximate equality in the conclusion is obvious so it suffices to show the estimate
	\[
	\left\| \mathrm{A}_{U,s}\circ {S_1}\right\|_{L^2(\R^n)} \lesssim_n \max\big(  \left\| \mathrm{A}_{V}\right\|_{L^2(\R^n)}, \left\| \M\right\|_{L^2(\R^n)}\big).
	\]
Note also the trivial estimate
\begin{equation}
\label{e:trivest}
\sup_{s\geq 1} \left\| \mathrm{A}_{U,s}\circ {S_1}\right\|_{L^2(\R^n)} \lesssim_n \left\| \M\right\|_{L^2(\R^n)}.
\end{equation}
Indeed the operator on the left hand side has symbol
\[
m(\xi) = \psi\left( \frac{\xi\cdot v}{s} \right) \phi(|\xi|)
\]
where $\phi$ is smooth and supported on an annulus $|\xi|\eqsim 1$. As $|\xi|\eqsim 1\leq s$ on $\mathrm{supp}\, m$ we get that $|\partial^\alpha _\xi m(\xi)|\lesssim |\xi|^{-|\alpha|}$ for $0\leq |\alpha| \leq 10n$, whence the estimate \eqref{e:trivest}.

{For the rest of the proof we assume that $s\leq 1$;} it is convenient to work with the scaled rough averages and maximal operators
\[
\langle f\rangle_{v,s}(x)\coloneqq   \int\displaylimits_{s|t|<1} f (x-t v )  \, \frac{s\d t}{2}, \qquad \mathrm{M}_{V,s} f(x) \coloneqq \sup_{v\in V}  \, \langle |f|\rangle_{v,s}(x) ;
\]
note the somewhat nonstandard use of space scales $1/s$ and that the same scaling property \eqref{e:scale} holds for $\mathrm{M}_V$ in place of $\mathrm{A}_V$.

As $V\subset \mathcal{A}_n(2)$ and $V'\subset \mathbb S^{n-1}$, we have the estimates
\begin{equation}
\label{e:lemmainter-1}
 \left\| \mathrm{M}_{V',\sigma}  \right\|_{L^2(\R^n)} \lesssim   \big\|  \mathrm{M}_{V,\frac{\sigma}{2}}  \big\|_{L^2(\R^n)} \lesssim \big\|  \mathrm{A}_{V,\frac{\sigma}{2}}   \big\|_{L^2(\R^n)}
\end{equation} uniformly over $\sigma>0$,
where the last inequality is obtained by Lemma \ref{l:roughsmooth}. By virtue of \eqref{e:lemmainter-1} it suffices to prove 
\begin{equation} \label{e:lemmainter}
\left\|\mathrm{
 A}_{U,s} \circ S_1 f \right\|_{L^2(\R^n)}\lesssim  \sup_{\sigma<s} \left\| \mathrm{M}_{V',\sigma} (S_1 f) \right\|_{L^2(\R^n)}.
\end{equation}
We write $g\coloneqq S_1 f$ below for simplicity. Let $v(u)\in V$ be chosen such that $|v(u)'-u'|<as$. We claim the estimate
\begin{equation}
\label{e:lemmainter2}
|{A}_{u,s} g| \lesssim  \sum_{k\geq 1} 2^{-kn} \mathrm{M} \big( \l| g| \r_{v(u)',2^{-k}s}\big).
\end{equation}
We first show how \eqref{e:lemmainter2} implies \eqref{e:lemmainter}:  
\[
\begin{split}
\left\|\mathrm{A}_{U,s} g \right\|_{L^2(\R^n)}& \lesssim  \bigg\| \sup_{v'\in V' }\sum_{k\geq 1} 2^{-kn} \mathrm{M} \left( \l |g| \r_{v',2^{-k}s}\right) \bigg\|_{L^2(\R^n)}
\\&\lesssim  \sum_{k\geq 1} 2^{-kn} \left\|  \mathrm{M} \circ \mathrm{M}_{V',2^{-k}s} g\right\|_{L^2(\R^n)} 
 \lesssim  \sup_{k\geq 1} \left\| \mathrm{M}_{V',2^{-k}s} g \right\|_{L^2(\R^n)}
\end{split}
\] 
which is \eqref{e:lemmainter}; note that \eqref{e:lemmainter2} has been used in the first inequality.

In order to prove  \eqref{e:lemmainter2} we may assume, by rotation invariance, that  $v(u)'=(1,0,\ldots, 0)$.  Then  
\begin{equation}
\label{e:bound:multip0}
|u_1|\lesssim 1, \qquad 
\frac{1}{s}\sup_{2\leq j\leq n} |u_j|\leq \frac{4}{s} \sup_{2\leq j\leq n} |u'_j| \leq 4    \min\big({\textstyle\frac1s},a\big).
\end{equation} 
Let us write 
\[
(A_{u,s}{S_1}f)^\wedge\eqqcolon m_{u,s}\widehat f
\]
for the multiplier of $A_{u,s}{S_1}$. Now note that under our assumptions on the parameter $a$ there exists a dimensional constant $c>1$ such that  the set ${R}_{v,  c  s}$ contains the support of $m_{u,s} $. Relying on \eqref{e:bound:multip0},   for every multiindex $\alpha$ we obtain
\begin{equation}
\label{e:bound:multip}
\left|\partial_{\xi}^{\alpha}  m_{u,s}(\xi)\right| = 
\left|\partial_{\xi}^{\alpha} \big[ \widehat{{S_1}}(\xi)\psi\left(s^{-1} u \cdot \xi\right) \big]\right| \lesssim  {s^{-\alpha_1}} \min\big(1,s^{- |\alpha|+ \alpha_1  }\big) {\eqsim s^{-\alpha_1}}.
\end{equation}
{since $s\leq 1$.} This readily implies that $K_{u,s}\coloneqq \widehat{m_{u,s}}$ satisfies the estimate
\[
\left|K_{u,s}(x_1,\ldots, x_{n}) \right| \lesssim   \frac{s}{ (1+s|x_1|)^{10n}}    \bigg(1+\sum_{j=2}^{n}|x_j| \bigg)^{-10n}  
\lesssim \sum_{k\geq 1}^\infty 2^{-kn} \widetilde{\cic{1}}_{B_k}(x),
\]
where $B_k$ is the tube centered at the origin and of sidelengths $s^{-1}2^{k}$ along $e_1$ and $ 2^{k}$ along $e_2,\ldots, e_n$
and $\widetilde{\cic{1}}_{B_k}=  \cic{1}_{B_k}/|B_k|$. Hence
\[
|A_{u,s}g|= |g*K_{u,s}| \leq |g|*|K_{u,s}|    \lesssim \sum_{k\geq 1}^\infty 2^{-kn} |g|* \widetilde{\cic{1}}_{B_k} \lesssim  \sum_{k\geq 1}^\infty 2^{-kn} \mathrm{M} \big( \l| g |\r_{v(u)',2^{-k}s}\big)
\]
where we used the inequality $|g|* \widetilde{\cic{1}}_{B_k}\lesssim\mathrm{M} \big( \l |g| \r_{v(u)',2^{-k}s}\big)$. We have reached \eqref{e:lemmainter2} and the proof is thus complete.
\end{proof}

Lemma \ref{lem:closeness} tells us that it suffices to study small scales when $f$ has annular frequency support.

\begin{corollary}  \label{cor:largescale}
There holds \[\sup_{s>2^{-5}} \left\|\mathrm{A}_{\mathcal A_n(2),s}\circ {S_1}   \right\|_{L^2(\R^{n})} \lesssim_n 1.\]
\end{corollary}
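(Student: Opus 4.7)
The corollary concerns only the large-scale regime $s>2^{-5}$, where the frequency band $R_{\xi,s}$ of \eqref{e:bandsR} occupies essentially all of $\mathcal A_n(1)$. In this regime there are morally only boundedly many distinguishable directions, since two directions within angular distance $\lesssim s$ produce indistinguishable multipliers on the annular frequency support. This observation is exactly what Lemma \ref{lem:closeness} formalizes, so the natural plan is to reduce to a fixed finite net on the sphere via that lemma, and then bound the maximum over finitely many directions trivially.

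My plan is to apply Lemma \ref{lem:closeness} with $U=\mathcal A_n(2)$ (so $U'=\mathbb S^{n-1}$), with the admissible parameter $a=1\in(0,2^3)$, and with $V\subset \mathbb S^{n-1}\subset \mathcal A_n(2)$ chosen once and for all as a $2^{-6}$-net of $\mathbb S^{n-1}$, which has cardinality $\#V\lesssim_n 1$. For every $s>2^{-5}$ one checks
\[
\mathrm{dist}(U',V')\leq 2^{-6}<2^{-5}<s = as,
\]
so the closeness hypothesis \eqref{e:red:closeness} is satisfied, and Lemma \ref{lem:closeness} yields
\[
\left\|\mathrm{A}_{\mathcal A_n(2),s}\circ S_1\right\|_{L^2(\R^n)} \lesssim_n \left\|\mathrm{A}_V\right\|_{L^2(\R^n)}
\]
uniformly in $s>2^{-5}$.

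To close the argument, I would use the crude pointwise bound
\[
\left\|\mathrm{A}_V f\right\|_{L^2(\R^n)} \leq \sum_{v\in V}\|A_v f\|_{L^2(\R^n)} \leq \#V\cdot \|\psi\|_{\infty}\,\|f\|_{L^2(\R^n)}\lesssim_n 1,
\]
since each $A_v$ is a Fourier multiplier with symbol $\psi(\xi\cdot v)$ bounded in modulus by $\|\psi\|_\infty$, and $\#V$ depends only on $n$. There is no genuine obstacle: the only subtlety is matching the hypothesis of Lemma \ref{lem:closeness}, and the threshold $s>2^{-5}$ is calibrated precisely so that a fixed $n$-dependent net works with the admissible parameter $a=1$.
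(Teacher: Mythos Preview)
Your proof is correct and follows essentially the same approach as the paper: apply Lemma~\ref{lem:closeness} with $U=\mathcal A_n(2)$ and a fixed finite net $V\subset\mathbb S^{n-1}$, then bound $\|\mathrm A_V\|_{L^2}$ trivially using that $\#V\lesssim_n 1$ and each $A_v$ is an $L^2$-bounded multiplier. The only differences are cosmetic choices of parameters (the paper uses a $2^{-10}$-net with $a=2^{-4}$, you use a $2^{-6}$-net with $a=1$), and your final bound via $\sum_{v\in V}\|A_vf\|_2$ is slightly more explicit than the paper's one-line appeal to the $L^2$-boundedness of each $A_v$.
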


\begin{proof} Let $s>2^{-5}$. Let $V$ be a $2^{-10}$-net in $\mathbb S^{n-1}
$. Then $U=\mathcal A_n(2)$ and $V$ satisfy the assumption \eqref{e:red:closeness} of Lemma \ref{lem:closeness} with $a=2^{-4}$. Hence
\[  
\left\|\mathrm{A}_{\mathcal A_n(2),s}\circ {S_1}   \right\|_{L^2(\R^{n})} \lesssim_n   \left\|\mathrm{A}_{V}    \right\|_{L^2(\R^{n})}\lesssim_n 1
 \]
as $V$ has cardinality $\Theta_n$ and for every $v$ the averaging operators $f\mapsto  A_{v }f $ are bounded on $L^2(\R^n)$. Note that an alternative proof of the statement of the corollary follows by the kernel estimates established in the proof of Lemma~\ref{lem:closeness}, applied for $s\gtrsim 1$.
\end{proof}

\begin{corollary} \label{cor:descent}
Let $0<s< 2^{-5}$, $n\geq 3$, and let $Z\subset  \mathcal{A}_{n}(1)$ have the property that \[\dist(Z,\xi^\perp)\leq a s\] for some $a \leq 2^{3}$. Denote by $\Pi_\xi Z$ the orthogonal projection of $Z$ on $\xi^\perp \equiv \mathbb R^{n-1}$. Then
\[
\sup_{\substack{U\subset Z\\ \# U \leq N} } \left\|\mathrm{A}_{U,s}\circ {S_1} \right\|_{L^2(\R^{n})} \lesssim_n 
 \sup_{\substack{V\subset \Pi_\xi Z\\ \# V \leq N} } \left\|\mathrm{A}_{V } \right\|_{L^2(\R^{n-1})} .
\]
\end{corollary}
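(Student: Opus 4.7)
The plan is to project the directions $U \subset Z$ onto the hyperplane $\xi^\perp$, apply Lemma \ref{lem:closeness} to pass from $U$ to its projection $V \coloneqq \Pi_\xi U$ with a controlled cost at the level of operator norms, and then use Fubini to identify the resulting $n$-dimensional maximal operator $\mathrm A_V$ (which sees only directions in $\xi^\perp$) with its $(n-1)$-dimensional counterpart on $\R^{n-1} \equiv \xi^\perp$.

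For the first step I would fix $U \subset Z$ with $\# U \leq N$, set $V \coloneqq \Pi_\xi U \subset \Pi_\xi Z$ (so that $\# V \leq \# U \leq N$), and verify the closeness hypothesis of Lemma \ref{lem:closeness}. Writing $u = v + \eta$ with $v = \Pi_\xi u$ and $\eta$ parallel to $\xi$, the hypothesis $\dist(Z,\xi^\perp) \leq as$ gives $|\eta| \leq as < 1/4$; combined with $|u| \geq 1$ (from $Z \subset \mathcal A_n(1)$) this yields $|v| \geq \sqrt{15}/4$, so both $U$ and $V$ lie in $\mathcal A_n(2)$, and a direct computation from $v = u - \eta$ produces $|u'-v'| \lesssim |\eta| \lesssim s$. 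Lemma \ref{lem:closeness} then gives
\[
\|\mathrm A_{U,s} \circ S_1\|_{L^2(\R^n)} \lesssim_n \|\mathrm A_V\|_{L^2(\R^n)}.
\]

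For the second step I would exploit the fact that every $v \in V$ lies in the hyperplane $\xi^\perp$. Decomposing $x = (x_\parallel, x_\perp) \in \R \times \xi^\perp$, the Fourier multiplier $\psi(v \cdot \zeta)$ of $A_v$ depends only on $\zeta_\perp$, so
\[
A_v f(x_\parallel, x_\perp) = \bigl(A_v^{(n-1)} f(x_\parallel,\cdot)\bigr)(x_\perp),
\]
where $A_v^{(n-1)}$ denotes the analogous $(n-1)$-dimensional operator on $\xi^\perp$. Taking the supremum in $v \in V$ and applying Fubini,
\[
\|\mathrm A_V f\|_{L^2(\R^n)}^2 = \int_\R \|\mathrm A_V^{(n-1)}f(x_\parallel,\cdot)\|_{L^2(\R^{n-1})}^2 \,\d x_\parallel \leq \|\mathrm A_V^{(n-1)}\|^2 \|f\|_{L^2(\R^n)}^2.
\]
Chaining with the previous display and taking the supremum over admissible $U$ delivers the corollary.

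The only real technical point is the pointwise closeness estimate $|u'-v'| \lesssim s$, which relies on the lower bound $|u|\geq 1$ from the annular hypothesis $Z \subset \mathcal A_n(1)$, together with the smallness $as < 1$ (guaranteed by $s<2^{-5}$ and $a\leq 2^3$); the numerical threshold $2^3$ in Lemma \ref{lem:closeness} is not sharp for our purposes, but any dimensional inflation in this constant is harmlessly absorbed into the factor $\lesssim_n$. After that, the Fubini step is essentially tautological, since $V$ lives exactly on the hyperplane and the corresponding averages act only in the $\xi^\perp$ variables.
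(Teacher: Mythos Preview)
Your proposal is correct and follows essentially the same route as the paper's proof: project $U$ onto $\xi^\perp$, invoke Lemma~\ref{lem:closeness} to pass to $\mathrm{A}_V$ on $\R^n$, and then use Fubini along the $\xi$-direction to descend to $\R^{n-1}$. You actually supply more detail than the paper does, both in verifying that $U,V\subset\mathcal A_n(2)$ and $|u'-v'|\lesssim s$, and in spelling out the Fubini step via the multiplier's dependence on $\zeta_\perp$ only; your remark that the threshold $a<2^3$ in Lemma~\ref{lem:closeness} is not sharp and any slack is absorbed into $\lesssim_n$ is also the right way to handle the minor constant mismatch.
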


\begin{proof}Let  $U\subset Z$ have $\# U \leq N$. The set $V\coloneqq\Pi_\xi U$ is contained in $\Pi_\xi Z\subset \R^{n-1}$ and has $\#V\leq N$. 
Then $U$ and $V$ satisfy the assumption \eqref{e:red:closeness} of Lemma \ref{lem:closeness}.  
 We  obtain that 
\[
 \left\|\mathrm{A}_{U,s} \circ {S_1}  \right\|_{L^2(\R^{n})} \lesssim_{n}  
\left\|\mathrm{A}_{V  }   \right\|_{L^2(\R^{n})} 
\lesssim_{n}
\left\|\mathrm{A}_{V }   \right\|_{L^2(\R^{n-1})} 
\]
where the first inequality is an application of   Lemma \ref{lem:closeness}, and the second follows from Fubini's theorem applied on the slices $E_b\coloneqq \{x\in \R^{n}:\, x\cdot \xi =b\},\, b\in \R$. The proof is complete.\end{proof}

\subsection{Polynomial partition of directions} \label{ss:ppd} We write $x=(x_1,\ldots,x_n)\in \mathbb \R^n$ and denote by $\mathbb R[x_1,\ldots,x_n]$ the ring of $n$-variable polynomials with real coefficients.

A   \emph{transverse complete intersection} $Z$ in $\R^n$  of dimension $m\in\{1,\ldots, n-1\}$ is the common zero set of polynomials $P_1,\ldots, P_{n-m}\in \R[x_1,\ldots, x_n]$:
\[
Z=\mathbf{Z}(P_1,\ldots,P_{n-m})= \{x\in \R^n:\,P_1(x) = \ldots= P_{n-m}(x)=0\},
\]
with the property that 
\begin{equation}
\label{e:wedge1}
\nabla P_{1}(x) \wedge \cdots \wedge \nabla P_{n-m}(x) \neq 0 
\end{equation}
holds for all $x\in Z$. In particular transverse complete intersections are $m$-di\-men\-sio\-nal smooth submanifolds of $\R^n$. We will in general use the shorthand notation (TCI) for transverse complete intersections.
  
The following proposition, which  is fundamental for the analysis in the current paper, is a form of polynomial partitioning on manifold, inspired by \cite{G1,Guth2} (see \cite[Theorem 2.3]{Zahl2} for a different approach) and adapted to the structure of our problem. It  partitions a finite set of directions $V$ lying on an $m$-dimensional TCI into a boundary component $V_\times$ of points lying arbitrarily close to a controlled number of $(m-1)$-dimensional TCIs, and a cellular component $V_\circ$. The component $V_\circ$ is itself partitioned into connected components of the complement of zero sets of boundedly many polynomials of controlled degree, each containing at most a fraction of the original number of points.     The proof of this proposition together with a  detailed presentation and analysis of relevant tools from algebraic geometry is contained in Section \ref{sec:alggeom}.
 
\begin{proposition}\label{prop:polpart} Let $Z=\mathbf{Z}(P_1,\ldots,P_{n-m})\subset \R^n$ be a transverse complete intersection of degree $D$. Let  $V\subset Z$ be a finite point set with $\#V\leq N^m$. For each  integer $E$ with $E^{m-1}\geq D^n$   and $\delta>0$ we may perform a partition  
\[
V=V_{\circ} \cup V_{\times}
\]
with the following properties.
\begin{itemize}
\item[\emph{1}.] There exist $\leq \Theta_{m,n} D^n$ transverse complete intersections \[W_j=\mathbf{Z}(P_1,\ldots, P_{n-m},Q_j)\] of dimension $m-1$ and degree $\leq \Theta_{m,n}  E$ such that
\[
\sup_{v \in V_{\times}} \inf_{j} \dist(v,W_j) <\delta.
\]
\item[\emph{2}.]  
 There exist $\leq \Theta_{m,n}D^n E^m$ disjoint connected subsets $\mathsf{C} \in \vec {\mathsf{C}}  $ of $Z$ with the property that
 \[
 V_\circ = \bigcup_{\mathsf{C} \in \vec {\mathsf{C}}} V_{\mathsf{C}}, \qquad V_{\mathsf{C}}\coloneqq V\cap \mathsf{C}, \qquad \#V_{\mathsf{C} }\leq \bigg(\frac N E\bigg)^m,
 \]
and such that 
for almost every $\xi \in \R^n$ and every  $a\in \R$, the intersection $\mathsf{C}\cap \{x\in \R^n:\xi\cdot x=a \}$ is nontrivial for at   most $\leq \Theta_{n,m} D^{2n}E^{m-1}$ elements of $\vec {\mathsf{C}} $.
 \end{itemize}
\end{proposition}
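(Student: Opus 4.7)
The plan is to combine a Guth-type polynomial partitioning on the transverse complete intersection $Z$ with the TCI-approximation result promised in the introduction as Proposition~\ref{p:smoothapprox}, whose role is to replace the possibly singular wall produced by Guth's polynomial with a controlled union of genuine $(m-1)$-dimensional TCIs.

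First, I would apply the polynomial partitioning scheme on the $m$-dimensional TCI $Z$ (in the vein of Guth's partition on subvarieties) to the point set $V\subset Z$ with $\#V\leq N^m$. With the parameter $E$ (for which the assumption $E^{m-1}\geq D^n$ is designed to ensure the required degree room), this produces a polynomial $Q\in\R[x_1,\ldots,x_n]$ of degree at most $\Theta_{m,n}E$ such that $Z\setminus \mathbf{Z}(Q)$ has at most $\Theta_{m,n}E^m$ connected components and each contains at most $(N/E)^m$ points of $V$. The remaining points lie on the wall $\mathbf{Z}(P_1,\ldots,P_{n-m},Q)\cap Z$, which is a real algebraic set of dimension $\leq m-1$ but generically not a TCI, since $\nabla P_1\wedge\cdots\wedge\nabla P_{n-m}\wedge \nabla Q$ may vanish on a Zariski-dense subset.

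Second, I would invoke Proposition~\ref{p:smoothapprox} to cover this wall, up to the prescribed $\delta$-neighborhood, by at most $\Theta_{m,n}D^n$ transverse complete intersections $W_j=\mathbf{Z}(P_1,\ldots,P_{n-m},Q_j)$ of dimension $m-1$ and degree at most $\Theta_{m,n}E$. Setting
\[
V_\times\coloneqq\big\{v\in V \st \inf_j\dist(v,W_j)<\delta\big\},\qquad V_\circ\coloneqq V\setminus V_\times,
\]
produces item~1 by construction. For item~2, I take $\vec{\mathsf{C}}$ to be the family of connected components of $Z\setminus \big(\mathbf{Z}(Q)\cup\bigcup_j \mathbf{Z}(Q_j)\big)$ that meet $V_\circ$; a Milnor--Thom / Oleinik--Petrovsky count on the $m$-dimensional TCI $Z$, cut by $1+\Theta_{m,n}D^n$ hypersurfaces of degree $\leq \Theta_{m,n}E$, produces at most $\Theta_{m,n}D^nE^m$ such cells, and each is a subset of a Guth-cell and therefore carries at most $(N/E)^m$ points of $V_\circ$.

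Third, for the hyperplane overlap property, fix $(\xi,a)$ so that $H=\{x\st\xi\cdot x=a\}$ meets $Z$ transversally (the exceptional pairs form a measure-zero set by a Sard-type argument). Then $Z\cap H$ is an $(m-1)$-dimensional TCI defined by $P_1,\ldots,P_{n-m}$ and the linear form $\xi\cdot x-a$, and every $\mathsf{C}\in\vec{\mathsf{C}}$ meeting $H$ contributes at least one connected component to $(Z\cap H)\setminus \big(\mathbf{Z}(Q)\cup\bigcup_j\mathbf{Z}(Q_j)\big)$. A Milnor--Thom count on this $(m-1)$-dimensional TCI, cut by the same auxiliary hypersurfaces, gives a bound of order $D^n(D+E)^{m-1}\leq\Theta_{m,n}D^{2n}E^{m-1}$, where the hypothesis $E^{m-1}\geq D^n$ absorbs one $D^n$ factor into $E^{m-1}$.

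The principal obstacle is the second step: producing the TCI approximation $\{W_j\}$ with count depending on $D$ alone (not on $E$) while keeping degrees of order $E$. This is exactly the role of Proposition~\ref{p:smoothapprox}, whose proof (deferred to Section~\ref{sec:alggeom}) proceeds by an explicit algebraic stratification of the singular locus of the wall using Gr\"obner-basis computations of elimination ideals, as previewed in the introduction; the rest of the argument reduces to standard real-algebraic complexity bounds.
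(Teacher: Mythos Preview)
Your proposal has a genuine gap in Step~2. Proposition~\ref{p:smoothapprox} approximates an arbitrary real algebraic variety by a union of TCIs, but its constant $\Theta=\Theta_{m,n,J,D}$ depends on the degree \emph{of the variety being approximated}. The wall $\mathbf{Z}(P_1,\ldots,P_{n-m},Q)$ you feed into it has degree of order $E$ (since $\deg Q\sim E$), so the proposition only delivers $\Theta_{m,n,E}$ many TCIs of degree $\Theta_{m,n,E}$, with no decoupling of the count from the degree. This fails to give the required $\leq\Theta_{m,n}D^n$ TCIs of degree $\leq\Theta_{m,n}E$; that separation is essential downstream, as the count of $W_j$'s multiplies the lower-dimensional term in the (PART)$_{(m,n)}$ estimate. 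You correctly identify this decoupling as the crux, but Proposition~\ref{p:smoothapprox} does not provide it. (Incidentally, the Gr\"obner-basis and elimination-ideal machinery you invoke belongs to the approximate projection Lemma~\ref{lemma:algproj}, not to Proposition~\ref{p:smoothapprox}; neither is used in the paper's proof of Proposition~\ref{prop:polpart}.)

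The paper achieves the decoupling by a different order of operations. First, following Guth, it decomposes $Z$ into $\leq\Theta_{m,n}D^n$ coordinate patches $O$ using the tangent-plane polynomials $P_g(x)=\nabla P_1(x)\wedge\cdots\wedge\nabla P_{n-m}(x)\wedge g$ for $g$ ranging over a fixed net in the Grassmannian $\Lambda^m\R^n$; each $W_g=\mathbf{Z}(P_1,\ldots,P_{n-m},P_g)$ is already a TCI of degree $\Theta_{m,n}D$, and each patch $O$ is a Lipschitz graph over an open subset of $\R^m$. Second, on each patch separately it runs ordinary polynomial partitioning in $\R^m$ (Lemma~\ref{lemma:approxPP}), producing a single $m$-variable polynomial $Q_O$ of degree $\Theta_m E$; points on that wall are absorbed into $W_O=\mathbf{Z}(P_1,\ldots,P_{n-m},Q_O+\varepsilon)$, which for generic $\varepsilon$ is a TCI. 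This yields exactly one TCI $W_O$ per patch beyond the finitely many $W_g$'s, hence $\leq\Theta_{m,n}D^n$ TCIs in total, each of degree $\leq\Theta_{m,n}E$. The hyperplane-crossing count is then carried out patch by patch against the single polynomial $Q_O$ via Barone--Basu, and the factor $D^{2n}$ in item~2 arises as $D^n$ (number of patches) times $D^n$ (the per-patch bound).
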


\section{Directional averages along arbitrary directions on $\mathbb S^2$} \label{sec:3dgeneraldirs}
This section is dedicated to the proof of Theorem~\ref{t:main:card}. Given a set of directions in $\mathbb S^2$ we will apply polynomial partitioning, in the form of Proposition \ref{prop:polpart}, to reduce the problem of estimating the corresponding maximal directional average into two parts. The first one, corresponding to the cells of the partition, will be handled inductively. The second term corresponds loosely to the zero set of the polynomial partitioning, which in this case is an algebraic variety of dimension $1$. In particular, the application of Proposition~\ref{prop:polpart} of the previous section allows us to work with a nice class of algebraic varieties, namely transverse complete intersections.

\subsection{Directional averages along one dimensional varieties} 
In the theorem below we work on the particular case of algebraic varieties of dimension one, contained in $\SS^2$. We note here that for a single smooth parametrizable curve in $\mathbb S^{n-1}$ an argument of C\'ordoba from  \cite{Cor1982}*{p. 223}, or of Christ, Duoandikoetxea, and Rubio de Francia from \cite{CDR}, could be used to yield a bound of the form $D \log N$. However these arguments do  not seem to directly provide the degree dependence $D^\frac12$ in the case of a general one-dimensional algebraic variety which might have several connected components. This degree dependence is crucial for our application in the case of averages along directions in $\SS^2$, forcing us to modify the argument from  \cite{Cor1982} in order to get the correct behavior $D^\frac12$  in terms of the degree, at the price of a slightly worse bound in terms of the cardinality of the set of directions. As we will need to apply the result below for a relatively high degree $D$ this trade-off is in our favor. We only insist on this variation for the case of algebraic varieties of dimension $1$ contained in $\SS^2$ as for the higher codimensional case the result of C\'ordoba is sufficient for our purposes; see Subsection~\ref{ss5:1n}. 

 \begin{theorem} \label{thm:curve}
Let $Z\subset \mathbb  S^2 $ be a transverse complete intersection of dimension $1$, given by a  non-zero polynomial $P$ with $\mathrm{deg}P=D$, namely
\[
Z=\left\{x \in \mathbb \R^3:\,P(x)=P_{\mathsf{sph}}(x)=0\right\}
\] 
where $P_{\mathsf{sph}}(x)\coloneqq x_1^2+x_2^2+x_3^2-1$ and 
\[
\nabla P(x) \wedge x \neq 0 \qquad \forall x \in Z.
\]  
Then 
\[
\sup_{\substack {V\subset Z\\ \# V= N}} 
\big\|   \mathrm{A}_{V}  f  \big\|_{L^2(\R^{3 })} \lesssim   D^\frac12  (\log N )^\frac32   \|f\|_{L^2(\R^{3 })} .
\]
Furthermore we have the following single-annulus estimate
	\begin{equation}\label{eq:curvesa}
	 \sup_{s>0} \sup_{\substack {V\subset Z\\ \# V= N}} \big\|   \mathrm{A}_{V,s}  \circ {S_1} f  \big\|_{L^2(\R^{3 })} \lesssim  D^\frac12  \log N    \|{S_1} f\|_{L^2(\R^{3 })} .
	\end{equation}
\end{theorem}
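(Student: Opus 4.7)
My plan is to first reduce the full estimate to the single-annulus estimate \eqref{eq:curvesa} by invoking Proposition \ref{p:CWW}, which supplies the extra $\sqrt{\log N}$ factor and carries us from $D^{1/2}\log N$ to $D^{1/2}(\log N)^{3/2}$. The real task is therefore \eqref{eq:curvesa}.

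For \eqref{eq:curvesa} I adapt Córdoba's Cotlar--Stein chaining argument from \cite{Cor1982} with two algebraic inputs about the degree-$D$ transverse complete intersection $Z$. The transversality hypothesis $\nabla P \wedge x \neq 0$ on $Z$ makes it a smooth compact one-submanifold of $\SS^2$; by Harnack's theorem applied to the degree-$2D$ curve $\mathbf{Z}(P,P_{\mathsf{sph}})$ it decomposes into at most $L \leq \Theta D^2$ smooth arcs $\Gamma_1,\ldots,\Gamma_L$, each admitting an arc-length parametrization with $C^2$ bounds depending only on the transversality modulus $\inf_{Z}|\nabla P \wedge x|$ (absorbed into the implicit dependence on $Z$). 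On each arc the standard Córdoba argument yields, for every $s>0$ and every $W \subseteq V \cap \Gamma_\ell$,
\[
\bigl\|\mathrm{A}_{W,s} \circ S_1\bigr\|_{L^2(\R^3) \to L^2(\R^3)} \lesssim \log N.
\]
The second input is a Bezout count: the triple $(P, P_{\mathsf{sph}}, \xi \cdot x - c)$ has degrees $(D,2,1)$, so $\#(Z \cap \{x : \xi\cdot x = c\}) \leq 2D$ for every $\xi \in \SS^2$ and $c\in\R$. Since the slab $R_{\xi,s}$ is bounded by two such planes, at most $\Theta D$ of the arcs $\Gamma_\ell$ can intersect $R_{\xi,s}$.

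To combine the per-arc estimates, I use the pointwise inequality $\mathrm{A}_{V,s} S_1 f \leq \bigl(\sum_\ell |\mathrm{A}_{V \cap \Gamma_\ell, s} S_1 f|^2\bigr)^{1/2}$ together with Plancherel. Writing $f_\ell$ for the frequency projection of $S_1 f$ onto $\bigcup_{v \in V \cap \Gamma_\ell} R_{v,s}$, the band inclusion $\mathrm{supp}\,\widehat{A_{v,s} S_1 f} \subseteq R_{v,s}$ gives $\mathrm{A}_{V \cap \Gamma_\ell, s} S_1 f = \mathrm{A}_{V \cap \Gamma_\ell, s} f_\ell$, and hence
\[
\|\mathrm{A}_{V,s} S_1 f\|_{L^2}^2 \leq \sum_{\ell} \|\mathrm{A}_{V \cap \Gamma_\ell, s} f_\ell\|_{L^2}^2 \lesssim (\log N)^2 \sum_\ell \|f_\ell\|_{L^2}^2 \lesssim D(\log N)^2 \|S_1 f\|_{L^2}^2,
\]
where the final inequality invokes the Bezout overlap count to collapse the sum of frequency-restricted norms back to $\|S_1 f\|_{L^2}^2$ with a multiplicative loss of $\Theta D$.

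The main obstacle I foresee is verifying that the per-arc Córdoba bound has constant depending only on the transversality modulus of $Z$ and not on the degree $D$ itself: the classical argument applies to a single smooth parametrized curve with bounded derivatives, and one must ensure that the $C^2$ norm of the arc-length parametrization of $\Gamma_\ell$ is controlled uniformly in $\ell$, so that the entire $D$-dependence of \eqref{eq:curvesa} arises only through the Bezout overlap step and not through the per-arc chaining. A related technical point, which may force a secondary partition of each $\Gamma_\ell$ into sub-arcs of small angular diameter, is that Córdoba's chaining is sensitive to how the arc wraps around $\SS^2$; once each sub-arc has diameter $\ll 1$ this issue disappears, and the factor of at most $\Theta D^2$ such sub-arcs is harmless since it is again absorbed by the same overlap accounting.
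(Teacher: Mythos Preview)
Your reduction via Proposition~\ref{p:CWW} is correct, and the high-level plan of combining a C\'ordoba-type chaining with a Bezout crossing count is sound. However, your overlap claim contains a gap that breaks the $D^{1/2}$ dependence. You assert that at most $\Theta D$ of the arcs $\Gamma_\ell$ can intersect a slab $R_{\xi,s}$, reasoning via the Bezout bound $\#(Z \cap \{\xi \cdot x = c\}) \leq 2D$. But Bezout only counts arcs that \emph{cross} one of the bounding planes $\{\xi\cdot x = \pm s\}$; it says nothing about components lying \emph{entirely inside} the slab. Since $Z$ has up to $\Theta D^2$ connected components and there is no obstruction to all of them being contained in $R_{\xi,s}$ for some particular $\xi$, your overlap count can be as large as $\Theta D^2$. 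With that overlap your combination step yields at best $D\log N$, not $D^{1/2}\log N$.

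This is exactly the obstruction the paper's proof isolates via a good/bad decomposition. Working with a $2^{-10}s$-net $\Omega\subset\SS^2$, one declares $\xi\in\Omega$ bad if $R_{\xi,3s}$ contains more than $D$ connected components of $Z$; since there are only $O(D^2)$ components total, a greedy selection produces at most $O(D)$ bad clusters. Directions in bad clusters are handled by descent to the two-dimensional problem on $\xi^\perp\cap\SS^2\cong\SS^1$ (Corollary~\ref{cor:descent}) together with Katz's $\sqrt{\log N}$ bound for arbitrary directions on $\SS^1$; the $\ell^2$ sum over $\leq\Theta D$ bad $\xi$ gives $D^{1/2}\sqrt{\log N}$. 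For the good part the containment count is at most $D$ by construction, and together with the $O(D)$ boundary-crossing count one now has a genuine $O(D)$ overlap, after which a \emph{global} chaining recursion over all good components at once gives $D^{1/2}\log N$.

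A related concern is your per-arc C\'ordoba bound with constant independent of $D$. Control of the $C^2$ norm of the arc-length parametrization does not by itself bound the number of intersections of $\Gamma_\ell$ with a fixed plane; a single connected component of a degree-$D$ curve on $\SS^2$ can meet a plane up to $2D$ times, and C\'ordoba's argument is driven precisely by this sign-change count rather than by smoothness. The paper avoids this by chaining globally across all good components, so that only the \emph{total} crossing count (which is $O(D)$ by Bezout) enters.
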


\begin{proof} We fix a transverse complete intersection $Z$ as in the statement of the theorem and let $V\subset Z$ be a set of directions with $\#V=N$. By an application of the Chang-Wilson-Wolff inequality of Proposition~\ref{p:CWW}, the proof of the lemma will follow from the single annulus estimate \eqref{eq:curvesa} which we prove in what follows. By Corollary \ref{cor:largescale}, it suffices to treat the case $0<s<2^{-5}$. The proof is divided in steps.

\subsubsection*{Decomposition into good and bad components}
The variety $Z$ can be decomposed into connected components $Z=\cup_j Z_j$. It is important to note here that the number of connected components of $Z$ is at most $O(D^2)$;  see for example \cite{BarBas} and the references therein.
	
Let $\Omega\subset \mathbb S^2$ be a $2^{-10}s$-net, that is a set of $2^{-10}s$-separated vectors $\xi\in \mathbb S^2$ such that the collection of caps 
\[
\omega_{\xi,s}=\big\{v\in \mathbb S^2:|v-\xi|<s\big\}, \qquad{\xi\in\Omega},
\]
is a finitely overlapping cover of $\mathbb S^2$. A \emph{cluster} $\mathsf{k}_\xi\subset \{Z_j\}_j$ with \emph{top} $\xi\in \Omega$ is a subset $\{Z_{j_k}\}_k\subseteq \{Z_j\}_j$ with $Z_{j_k}\subseteq R_{\xi,3s}$. We say that a cluster is \emph{bad} if its interior contains (in the sense of set inclusion) more than $D$ distinct components from $\{Z_j\}_j$. By a greedy selection algorithm we can identify a set $\Omega_{\mathsf{bad}}\subseteq \Omega$ and a corresponding set $Z_{\mathsf{bad}}\subseteq\{Z_j\}$ of at most $O(D)$ bad clusters. Setting $\Omega_{\mathsf{good}}\coloneqq \Omega\setminus \Omega_{\mathsf{bad}}$ we then know that for every $\xi \in \Omega_{\mathsf{good}} $ the band $R_{\xi,3s}$ contains at most $D$ connected components from $\{Z_j\}_j$. We define $V_{\mathsf{good}}$ and $V_{\mathsf{bad}}$ to be those directions from $W$ contained in $Z_{\mathsf{good}}$, and $Z_{\mathsf{bad}}$, respectively. We also write $V_{\xi}$ for those directions contained in a cluster $\mathsf{k}_\xi$ for some $\xi\in\Omega$.

\subsubsection*{Estimate for the bad component directions $V_{\mathsf{bad}}$}
We first estimate the operator corresponding to directions in $V_{\mathsf{bad}}$ on a single frequency annulus. We have
\begin{equation}
\label{e:2dtrick0}
\big\|   \mathrm{A}_{V_{\mathsf{bad}},s}\circ  {S_1}f  \big\|_{L^2(\R^3)}\leq \Bigg (\sum_{\xi\in \Omega_{\mathrm {bad}}} \big\|  \mathrm{A}_{V_\xi,s}\circ {S_1}  f\big\|_{L^2(\R^3)} ^2 \Bigg)^\frac12. 
\end{equation}
We note that for $\xi \in \Omega_{\mathsf{bad}}$ the set $\bigcup\{Z_j:Z_j\in\mathsf{k}_\xi\}$ is contained in a frequency band $R_{\xi,3s}$ so we can approximate the directions in $V_\xi$ for $\xi\in\Omega_{\mathsf{bad}}$ by directions lying on $\xi^\perp\cap \mathbb S^2\equiv \mathbb S^1$. Note that the sets $\{V_\xi\}_{\xi\in\Omega_{\mathsf{bad}}}$ are pairwise disjoint, by our greedy selection algorithm, and each has cardinality $\leq N$. More precisely, we apply Corollary~\ref{cor:descent} with $\xi\in \Omega_{\mathrm{bad}}$ to obtain
\begin{equation}
\label{e:2dtrick}
\begin{split}
& \big\|  \mathrm{A}_{V_\xi,s}\circ {S_1}  f\big\|_{L^2(\R^3)} \lesssim\bigg(  \sup_{\substack{U\subset \xi^\perp\cap \mathbb S^2\\ \#U\leq N } } \|\mathrm{A}_{U }\|_{L^2(\R^2)}\bigg)\|S_1 f\|_{L^2(\R^3)} 
\\
&\qquad \lesssim \sqrt{\log N} \|S_1 f\|_{L^2(\R^3)}.
\end{split}
\end{equation}
The last inequality follows by an application of the two-dimensional {single-scale} result of Katz \cite{KB}. Alternative proofs of this two-dimensional result have since been given in \cite{Dem,DPGTZK}.
Combining  \eqref{e:2dtrick0} with \eqref{e:2dtrick} we gather that 
\[
\big\|   \mathrm{A}_{V_{\mathsf{bad}},s}\circ  {S_1}f  \big\|_{L^2(\R^3)} \lesssim  \bigg (\sum_{\xi\in \Omega_{\mathrm {bad}}}  \log N 
\bigg)^\frac12 \|{S_1} f\|_{L^2(\R^3)}\lesssim D^\frac12 \sqrt{\log N}\|{S_1} f\|_{L^2(\R^3)}
\]
since  $\# \Omega_{\mathsf{bad}}\leq D$. This gives the desired estimate for the bad part. 

\subsubsection*{Estimate for  $V_{\mathsf{good}}$}
We now move to the estimation of $V_{\mathsf{good}}$ which relies  	on overlap considerations. Let $Z_{\mathsf{good}}$ consist of some connected components   $\{Z_j\}_{j\in J}$ and let $V_j$ denote the directions of $V$ contained in $Z_j$. We write
\[
V_j= \{v_{j,1},\ldots,v_{j,n_j}\}
\]
where, for fixed $j\in J$, the directions are sorted in consecutive order.

In this proof, we need a rough analogue of $A_{v,s}$. Let $I\subset \R $ be any interval and for $g \in L^2(\R^n)$ define
\[
B_{v,I}g(x) \coloneqq \int_{\R^n} \widehat g(\beta) \cic{1}_I(\beta \cdot v) \e^{ix\cdot\beta} \d\beta, \qquad \mathrm{B}_{V,I} g \coloneqq \sup_{v\in V} |B_{v,I} g|.
\] 
We also define $I_{a,\sigma}\coloneqq (a-\sigma, a+\sigma)$. By an averaging argument we have that
\[
\| \mathrm{A}_{V_{\mathsf{good}},s}\circ  {S_1} f\|_{L^2(\R^n)} \lesssim  \|\psi\|_{\mathrm {BV}} \sup_{0<\sigma\leq s} \sup_{|a|<s} \| \mathrm{B}_{V_{\mathsf{good}, I_{a,\sigma}}}(S_1f)\|_{L^2(\R^n)} .
\]
Therefore, it suffices to estimate the $L^2(\R^3)$-norm of the operator $ \mathrm{B}_{V_{\mathsf{good}, I_{a,\sigma}}}\circ  {S_1}$ uniformly over $0<\sigma\leq s$. This can be done as follows. Set
\[
{L}_{\beta,a,\sigma} \coloneqq \{v\in \mathbb S^2:\,|\beta\cdot v- a|<\sigma\}{=\{v\in\mathbb S^2:\, \beta\cdot v \in I_{\alpha,\sigma}}\}, \qquad \beta \in \mathcal A_n(1).
\]
Then 
\[
\begin{split} &
\quad \| \mathrm{B}_{V_{\mathsf{good}, I_{a,\sigma}}}\circ  {S_1}\|_{L^2(\R^3)}\leq \sup_{\beta \in \mathcal A_{n}(1)}\bigg(\sum_{j\in J}\sum_{\ell=1} ^{n_j-1} \big|\ind_{L_{\beta,a,\sigma}}( v_{j,\ell+1}) - \ind_{L_{{\beta},a,\sigma}}( v_{j,\ell}) \big|^2 \bigg)^\frac12 
\\
& + \sup_{\beta \in \mathcal A_{n}(1)}\bigg(\sum_{j\in J}  \big|
\ind_{L_{\beta,a,\sigma}}( v_{j+1,1}) - \ind_{L_{{\beta},a,\sigma}}( v_{j,n_j}) \big|^2 \bigg)^\frac12 + \|  \mathrm{B}_{\widetilde{V}_{\mathsf{good}}, I_{a,\sigma}}\circ  {S_1}\|_{ L^2(\R^3) }
\end{split}
\]
where $\widetilde V_{\mathsf{good}}\subset V_{\mathsf{good}}$ satisfies $\# \tilde V_{\mathsf{good}} \leq \#V_{\mathsf{good}}/2$. 

We now estimate the square functions in the first two summands. For this fix some $\beta\in \mathcal A_{n}(1)$ and pick $\xi\in \Omega$ such that $\beta ' \in \omega_{\xi,s}${. Here we remember that $\beta'=\beta/|\beta|$ and} note that $\xi$ remains fixed as long as $\beta$ is fixed.
For the square function in the first summand note that if $\ind_{L_{\beta,a,\sigma}}( v_{j,\ell+1}) - \ind_{L_{\beta,a,\sigma}}( v_{j,\ell}) \neq 0$ then the piece of $Z_j$ between $v_{j,\ell}$ and $v_{j,\ell+1}$, which is connected, must cross one of the hyperplanes $\{v:\,\beta\cdot v  =a \pm \sigma\}$. Overall the variety $Z$ can cross these hyperplanes at most $O(D)$ times so the first summand above is $O(D^\frac12)$.

 For the square function in the second summand we note that $\ind_{L_{\beta,a,\sigma}}( v_{j+1,1}) - \ind_{L_{\beta,a,\sigma}}( v_{j,n_j})\neq 0 $ implies one of the two following possibilities
\[
\begin{split}
&\text{either}\qquad |\beta \cdot v_{j+1,1} - a|{<} \sigma \quad \text{and}\quad |\beta \cdot v_{j,n_j}-a|{\geq  \sigma},
\\
&\text {or} \qquad |\beta \cdot v_{j+1,1}-a|{\geq\sigma} \quad \text{and}\quad |\beta\cdot v_{j,n_j}-a|{<}  \sigma.
 \end{split}
\]
In either one of the cases above there is at least one component $Z_*\in Z_{\mathrm {good}}$ and $\beta' \in \mathbb S^2$ such that $|\beta' \cdot v-a|\leq {s}$ for some $v\in Z_*$, {where we crucially use that $|a|,\sigma\leq s$ and that $\beta\in \mathcal A(1)$}. This implies that $Z_* \cap R_{\xi,3s}\neq \varnothing$ {using the triangle inequality and the facts that $|a|\leq s$ and $\beta'\in \omega_{\xi,s}$}. Thus a bound for the second summand can be calculated by counting how many of the components $Z_*\in Z_{\mathsf{good}}$ can intersect $R_{\xi,3s}$. As the components $Z_*\in Z_{\mathsf{good}}$, at most $D$ of them can be  contained in the interior of $R_{\xi,3s}$. On the other hand the components $Z_*$ that satisfy $Z_* \nsubseteq R_{\xi,3s}$ and $Z_*\cap R_{\xi,3s}\neq \varnothing$  can be counted by the number of connected components of the set $ \{\eta\in\mathbb S^2:\, \eta\cdot \xi=\pm 3 s\} \setminus Z $ which is $O(D)$. This follows from the fact that the set $ \{\eta\in\mathbb S^2:\, \eta\cdot \xi=\pm 3s\} $ is a smooth algebraic variety of dimension $1$ and an application of the Milnor-Thom theorem;  see for example \cite{BarBas} and the references therein. 

We have thus proved the estimate
\[
\Big\| \mathrm{B}_{V_{\mathsf{good}, I_{a,\sigma}}} \circ S_1\Big\|_{L^2(\R^3)}\leq CD^\frac12 +\Big\| \mathrm{B}_{\widetilde{V}_{\mathsf{good}, I_{a,\sigma}}}\circ S_1 \Big\|_{L^2(\R^3) }.
\]
This recursive inequality can be iterated on $V_{\mathrm{good}}$ yielding the bound $D^\frac12 \log N$ for the good part so the proof is complete.\end{proof}

\subsection{Reduction to single annulus and polynomial partition}
Theorem~\ref{t:main:card}  is reduced via  Proposition \ref{p:CWW} and Corollary \ref{cor:largescale}   to the following single annulus bound. As mentioned after the statement of Theorem~\ref{t:main:card} in \S\ref{sec:intro}, standard counterexamples show that the growth rate in \eqref{e:mainsa} is sharp up to the iterated logarithmic term.

\begin{proposition} \label{p:single:card}  Let $V\subset \mathbb S^{2}$ be a finite subset with $\#V=N^2$. Then  for all $k\geq 1$
\begin{equation} \label{e:mainsa}
\sup_{s>0} \left\|
\mathrm{A}_{V,s}\circ {S_1}   \right\|_{L^2(\R^{3})}    \lesssim_k N^{\frac12}  \log^{[k]}(N)   .
\end{equation}
\end{proposition}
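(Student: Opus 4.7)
The plan is to induct on the iterated-log depth $k$, where each inductive step is executed by a single application of the polynomial partition of Proposition~\ref{prop:polpart} on $\mathbb S^2$ combined with the one-dimensional curve bound of Theorem~\ref{thm:curve}. By Corollary~\ref{cor:largescale} we may assume $0 < s < 2^{-5}$, and it is convenient to introduce the quantity
\[
\mathcal K(N) \coloneqq \sup_{0 < s < 2^{-5}} \sup_{\substack{V \subset \mathbb S^2 \\ \#V \leq N^2}} \bigl\| \mathrm A_{V,s} \circ S_1 \bigr\|_{L^2(\R^3)},
\]
so that the goal becomes $\mathcal K(N) \lesssim_k N^{1/2} \log^{[k]}(N)$.

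Given a parameter $E$ to be chosen as a function of $N$ and $k$, apply Proposition~\ref{prop:polpart} to $V$ viewed inside the two-dimensional transverse complete intersection $\mathbb S^2 = \mathbf{Z}(P_{\mathsf{sph}})$ with $n=3$, $m=2$, $D=2$. This produces a decomposition $V = V_\circ \cup V_\times$, where $V_\times$ lies in an arbitrarily small $\delta$-neighborhood of the union of a bounded number of one-dimensional TCIs $W_j \subset \mathbb S^2$ of degree $\lesssim E$, while $V_\circ = \bigcup_{\mathsf C} V_{\mathsf C}$ is split into $\lesssim E^2$ cellular pieces with $\#V_{\mathsf C} \leq (N/E)^2$ and the overlap property that at most $\lesssim E$ cells meet any given affine hyperplane of $\R^3$. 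For the boundary part, Lemma~\ref{lem:closeness} (with $\delta\ll s$) absorbs the perturbation, and Theorem~\ref{thm:curve} applied on each $W_j$ yields
\[
\bigl\| \mathrm A_{V_\times,s} \circ S_1 \bigr\|_{L^2(\R^3)} \lesssim E^{1/2}\log N.
\]

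For the cellular part, the Fourier multiplier of $A_{v,s} \circ S_1$ is supported in the fat band $R_{v,cs}$ of \eqref{e:bandsR}, so only directions within an $O(s)$-neighborhood of $\xi^\perp$ contribute at frequency $\xi$. Introducing smooth Fourier projectors $P_{\mathsf C}$ adapted to $\bigcup_{v \in V_{\mathsf C}} R_{v,cs}$, one factors $\mathrm A_{V_{\mathsf C},s}(S_1 f) = \mathrm A_{V_{\mathsf C},s}(P_{\mathsf C} S_1 f)$ up to a harmless Schwartz tail, then uses Proposition~\ref{prop:polpart}(2) to derive the overlap bound $\sum_{\mathsf C} \|P_{\mathsf C} S_1 f\|_{L^2}^2 \lesssim E\, \|S_1 f\|_{L^2}^2$. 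Combining with the definition of $\mathcal K(N/E)$ applied cellwise gives $\|\mathrm A_{V_\circ,s} \circ S_1\|_{L^2} \lesssim E^{1/2}\mathcal K(N/E)$, and summing the two contributions yields the master recurrence
\[
\mathcal K(N) \leq C \bigl[ E^{1/2}\log N + E^{1/2} \mathcal K(N/E) \bigr].
\]

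The induction on $k$ is then closed as follows. The base case $k=1$ comes from the recurrence with $E=N$, where cells degenerate to singletons and $\mathcal K(1) = O(1)$, recovering the $N^{1/2}\log N$ bound of \cite{Dem12}. For the step $k \to k+1$, the hypothesis $\mathcal K(N/E) \lesssim (N/E)^{1/2}\log^{[k]}(N/E)$ turns the recurrence into $\mathcal K(N) \lesssim E^{1/2}\log N + N^{1/2}\log^{[k]}(N/E)$, and the choice $E \sim N\,(\log^{[k+1]} N/\log N)^2$ makes $N/E \sim (\log N/\log^{[k+1]} N)^2$, so $\log^{[k]}(N/E) \sim \log^{[k+1]} N$ and both terms balance at $N^{1/2}\log^{[k+1]} N$. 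The principal technical hurdle is the passage from the per-hyperplane overlap of Proposition~\ref{prop:polpart}(2) to the operator-level overlap on the $s$-thick slab around $\xi^\perp$: cells lying entirely inside this slab are not directly controlled by the single-hyperplane count, and properly matching the partition scale $1/E$ with the Fourier thickness $s$ is the delicate geometric step of the argument.
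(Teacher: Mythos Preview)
Your overall architecture---polynomial partition on $\mathbb S^2$, Theorem~\ref{thm:curve} for the wall part, and a recursion in $N$---matches the paper, and your induction on $k$ with a $k$-dependent choice of $E$ is a legitimate variant of the paper's single recursion $K_N \leq \Theta E[(\log N)^3 + K_{N/E}]$ iterated with $E\simeq N/(\log N)^3$. However, the step you yourself flag as ``the principal technical hurdle'' is a genuine gap, and the resolution you suggest (matching the partition scale $1/E$ to the Fourier thickness $s$) does not work: $s$ ranges over all of $(0,2^{-5})$ while $E$ is fixed by $N$ and $k$, so no such matching is available.

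Concretely, the overlap bound $\sum_{\mathsf C}\|P_{\mathsf C}S_1f\|_2^2\lesssim E\|S_1f\|_2^2$ requires that at most $\lesssim E$ cells meet the fat band $R_{\beta,s}$. Proposition~\ref{prop:polpart}(2) only controls cells that \emph{cross} a fixed hyperplane; it says nothing about cells entirely contained in $R_{\beta,s}$, and since there are $\sim E^2$ cells of typical diameter $\sim 1/E$ while the band has area $\sim s$, for $s\gg 1/E$ one can have far more than $E$ cells inside the band. The paper closes this gap by a good/bad cluster decomposition: for each $\xi$ in a net $\Omega$, call the cluster $\mathsf k_\xi=\{\mathsf C:\mathsf C\subset R_{\xi,3s}\}$ \emph{bad} if $\#\mathsf k_\xi>E$. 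Since $\#\vec{\mathsf C}\lesssim E^2$, a greedy selection extracts $\lesssim E$ bad clusters; the directions in each lie within $3s$ of the great circle $\xi^\perp\cap\mathbb S^2$ and are handled by Lemma~\ref{lem:closeness} plus the $D=1$ case of Theorem~\ref{thm:curve}, contributing another $E(\log N)^3$ to the (squared) recursion. After removing bad clusters, \emph{every} remaining band contains at most $E$ good cells, and cells crossing the boundary hyperplanes $\{\xi\cdot x=\pm 3s\}$ number $\lesssim E$ by Proposition~\ref{prop:polpart}(2); together this gives the claimed $\lesssim E$ overlap for $V_{\circ,\mathsf{good}}$. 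A minor secondary point: Lemma~\ref{lem:closeness} outputs the \emph{full} operator norm $\|\mathrm A_{V_j}\|$, not the single-annulus one, so your $V_\times$ bound should be $E^{1/2}(\log N)^{3/2}$ rather than $E^{1/2}\log N$; this is harmless for the final result.
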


 As anticipated, the proof uses polynomial partitioning on $\mathbb S^2$.   We will prove the proposition by showing inductively  that  
\[
K_N \coloneqq   \sup_{s>0} \sup_{\substack{V\subset \mathbb{S}^2 \\ \#V = N^2}} \left\| \mathrm{A}_{V,s}\circ {S_1}  \right\|_{L^2(\R^{3})}^2 
\]
 fulfills \eqref{e:mainsa}. More precisely, for every positive integer $E>1$ we will prove the recursive estimate
\begin{equation} \label{e:rec}
 K_N  \leq   \Theta E \left[ (\log N )^3+  K_{\frac NE} \right]
\end{equation}
where the constant $\Theta>0$ is independent of $N$ and $E$.    We show at the end of the proof how \eqref{e:rec} implies the proposition.

\begin{proof}[Proof of \eqref{e:rec}] By Corollary \ref{cor:largescale} it suffices to argue for $0<s<2^{-5}$. Throughout, $\Theta>0$ will be some absolute constant which may vary from line to line.  To prove \eqref{e:rec} we fix some  $V\subset \mathbb S^{2}$ with cardinality at most $N^2$. Applying the polynomial partition on $\mathbb S^2=\mathbf{Z}(P_{\mathsf{sph}})\subset \R^3$ from Proposition~\ref{prop:polpart},  so that $D=2$, with the choice $\delta=2^{-10}s$,  we obtain a decomposition
\begin{equation}
\label{e:pf320}
V = V_{\circ} \cup V_{\times}
\end{equation}
with the following properties. First,
there exist $\leq \Theta $ transverse complete intersections $W_j=\mathbf{Z}(P_{\mathsf{sph}},Q_j)$ of dimension $1$ and degree $\leq \Theta  E$ such that
\begin{equation}
\label{e:pf321}
\sup_{v \in V_{\times}} \inf_{j} \dist(v,W_j) <2^{-10}s.
\end{equation}
Secondly, there exist $\leq \Theta  E^2$ disjoint connected subsets of  $\mathsf{S}^2$, denoted by $\mathsf{C} \in \vec {\mathsf{C}}  $,   with the property that
\begin{equation}
\label{e:pf322}
 V_\circ = \bigcup_{\mathsf{C} \in \vec {\mathsf{C}}} V_{\mathsf{C}}, \qquad V_{\mathsf{C}}\coloneqq V_{\circ}\cap \mathsf{C}, \qquad \#V_{\mathsf{C} }\leq \left(\frac N E\right)^2,
 \end{equation}
and such that  for almost every $\xi \in \R^2$ and every  $a\in \R$, the intersection $\mathsf{C}\cap \{x\in \R^2:\xi\cdot x=a \}$ is nontrivial for   $\leq \Theta E$ elements of $\vec {\mathsf{C}} $.
 
\begin{remark} For those readers who have not looked ahead at the proof of Proposition \ref{prop:polpart}, we provide an explanation of how the bounded crossing property
\begin{equation}
\label{e:bcp}
\# \big\{\mathsf{C} \in \vec {\mathsf{C}}:\,\mathsf{C} \cap \{x\in \R^2:\xi\cdot x=a \} \neq \varnothing\big\}\leq \Theta E
\end{equation}
is obtained. This explanation is fully rigorous if the points of $V$ initially lie on a  coordinate neighborhood of $\mathbb S^2$ (say the $1/10$ neighborhood of the north pole), and is made fully rigorous in a more general setup in the proof of  Proposition \ref{prop:polpart}. The cells $\mathsf{C} \in \vec {\mathsf{C}}$ are the connected components of $\mathbb S^2\setminus Z$, where $Z$ is an algebraic subvariety of $\mathbb S^2$ obtained as the  zero set $Z=\mathbf{Z}(P, P_{\mathsf{sph}})$ with $P$ a polynomial of degree $\leq \Theta E$. The intersection of a generic hyperplane with $\mathbb S^2\setminus Z$ can have at most $\leq \Theta E$ connected components (as many as the common zeros of $P, P_{\mathsf{sph}}$ on the hyperplane), and the number of these connected components coincides with the left hand side of the last display.
 \end{remark}

\subsubsection*{Controlling $V_\times$}
In view of  \eqref{e:pf321} we may find subsets $V_j\subset W_j$ with $\#V_j \leq N^2$ so that for each  $v\in V_\times$ there exists $j$ and $w(v)\in V_j$ with $|v-w(v)|<2^{-10}s$. Hence using Lemma  \ref{lem:closeness},  directions from $V_\times$ can be effectively approximated with directions from $ V_j\subset W_j$. More precisely,  we have that 
\begin{equation}\label{e:polybound}
\begin{split}
\big\|  \mathrm{A}_{V_\times,s} \circ {S_1} f  \big\|_{L^2(\R^3)}^2 & \leq  \Theta \left(\sup_{j}   \big\|  \mathrm{A}_{V_j}   \big\|_{L^2(\R^3)}^2\right) \left\|  {S_1} f  \right\|_{2}^2\\ &  \leq   \Theta E(\log N)^3 \left\|{S_1}f\right\|_{L^2(\R^3)}^2 
\end{split}
\end{equation}
applying Lemma \ref{lem:closeness} for the first inequality and   Theorem~\ref{thm:curve} for each subset $V_j$ of the  transverse complete intersection $W_j$ to finish.

\subsubsection*{Splitting $V_\circ$ into good and bad clusters}
First of all, let $\xi \in \Omega$ be a $2^{-10}s$-net on $\mathbb{S}^2$ chosen so that  the bounded crossing property \eqref{e:bcp} holds with $a=\pm 3s$, for \emph{all} $\xi \in \Omega$.    We define $\mathsf{k}_\xi\subset \vec{\mathsf{C}}$ to be a cluster of cells with top $\xi \in \Omega$ if
\[
\mathsf{C} \subset R_{\xi,3s} \qquad \forall \mathsf{C}\in {\mathsf{k}}_\xi.
\]
We say that a cluster ${\mathsf{k}}_\xi$ is bad if it contains more than $E$ cells. As $\# \vec{\mathsf{C}}\leq \Theta E^2$, by  an iterative selection algorithm, we may construct $\Omega_{\mathsf{bad}}\subset \Omega$ with   $\#\Omega_{\mathsf{bad}}\leq \Theta E$ such that setting
\[
\vec{\mathsf{C}}_{\mathsf{good}} \coloneqq \vec{\mathsf{C}} \setminus \bigcup_{\xi \in\Omega_{\mathsf{bad}} }\mathsf{k}_\xi
\]
there holds
\begin{equation}
\label{e:nottoomuchcont}
\#\left\{\mathsf{C}\in \vec{\mathsf{C}}_{\mathsf{good}}:\,\mathsf{C} \subset R_{\xi,3s} \right\} \leq  E \qquad 
\forall \xi \in \Omega.
\end{equation}
Accordingly we define
\[
\begin{split} &
V_\xi  \coloneqq \bigcup_{\mathsf{C}\in \mathsf{k}_\xi} V_{\mathsf{C}}
,\qquad 
V_{\circ, \mathsf{bad}}  \coloneqq \bigcup_{\xi \in\Omega_{\mathsf{bad}} }V_{\xi},
\qquad
V_{\circ, \mathsf{good}}   \coloneqq\bigcup_{\mathsf{C}\in \vec{\mathsf{C}}_{\mathsf{good}}} V_{\mathsf{C}}. 
\end{split}
\]

\subsubsection*{Controlling $ V_{\circ, \mathsf{bad}}$} Each of the $\leq\Theta E$ bad clusters will be controlled by a lower dimensional estimate. By construction, for each $v\in V_\xi$ we may find $u(v) \in W_\xi \coloneqq \SS^{2} \cap \xi^\perp$ with $|v-u(v)|\leq 3s$. Define $U_\xi= \{u(v):v\in V_\xi\}\subset W_\xi$, and note that $\#U_\xi\leq N^2$. Using    Lemma  \ref{lem:closeness} again we get
\begin{equation}\label{e:badbound}
\big\|  A_{V_{\circ, \mathsf{bad}},s} \circ {S_1} f  \big\|_{L^2(\R^3)}^2  \leq  \Theta  E \left( \sup_{ \xi \in\Omega_{\mathsf{bad}
}}  \big\|  A_{U_\xi }   \big\|_{L^2(\R^3)}^2  \right) \left\| {S_1} f  \right\|_{2}^2 \leq   \Theta E (\log N)^3\left\|{S_1}f\right\|_2^2
\end{equation}
where we used the $D=1$ case of  Theorem \ref{thm:curve} on each $U_\xi \subset W_\xi$.

\subsubsection*{Controlling $ V_{\circ, \mathsf{good}}$} It is convenient to write $F\coloneqq S_1f$ below.
For $\mathsf{C} \in \vec{\mathsf{C}}_{\mathsf{good}}$ we define  
 \begin{equation}
\label{e:rc}
R_{\mathsf{C}} \coloneqq\bigcup_{v\in  V_{\mathsf{C}}} R_{v,s}.
\end{equation}
We rely on the observation below \eqref{e:bandsR} to gather that  $A_{v,s} F= A_{v,s} F_{R_{\mathsf{C}}}$  whenever $v\in \mathsf{C}$, where we remember that $F_R$ denotes the rough frequency restriction of $F$ to some $R\subset \R^3$. Then we compute the overlap of the bands $\{R_{\mathsf{C}}:\,\mathsf{C} \in \vec{\mathsf{C}}_{\mathsf{good}}\}$ at a generic  $\beta\in \mathbb R^3$.
 By homogeneity it is enough to take $\beta \in \SS^2$. Choose  $\xi \in \Omega$ with $|\xi -\beta|<2^{-10}s$.   Then the overlap of the frequency supports  $R_{\mathsf{C}}$ at $\beta$  is bounded by 
 \begin{equation}
\label{e:rc2}
 \#\Big\{\mathsf{C} \in \vec{\mathsf{C}}_{\mathsf{good}}:\,\mathsf{C}\cap R_{\beta, s}\neq \varnothing\Big\}\leq \#\Big\{\mathsf{C} \in \vec{\mathsf{C}}_{\mathsf{good}}:\,\mathsf{C}\cap R_{\xi, 2s}\neq \varnothing\Big\} .\end{equation}
As by construction of ${\mathsf{C}}_{\mathsf{good}}$
\[
\#\Big\{\mathsf{C}\in \vec{\mathsf{C}}_{\mathsf{good}}:\,\mathsf{C} \subset R_{\xi,3s} \Big\} \leq E 
\]
we are left with counting 
\[
\begin{split} & \quad 
\#\Big\{\mathsf{C}\in \vec{\mathsf{C}}_{\mathsf{good}}:\,\mathsf{C} \cap R_{\xi,2s} \neq \varnothing, \,\mathsf{C} \not \subset R_{\xi,3s}  \Big\}
\\
& \leq  \#\Big\{\mathsf{C}\in \vec{\mathsf{C}}_{\mathsf{good}}:\,\mathsf{C} \cap \left\{x\in \R^2:\xi\cdot x=\pm 3 s   \right\} \neq \varnothing\Big\} \leq \Theta E
\end{split}
\]
where \eqref{e:bcp} has been taken into account. 
As each $V_\mathsf{C}$ contains at most $(N/E)^2$ directions, see \eqref{e:pf322}, and $F_{R_{\mathsf C}}= S_1 F_{R_{\mathsf C}} $,
we note that
\[
\left\|  \mathrm{A}_{   V_\mathsf{C},s}F_{R_{\mathsf{C}}}\right\|^2_{L^2(\R^3)}  \leq  K_{\frac NE} \left\|  F_{R_{\mathsf{C}}}\right\|^2_{L^2(\R^3)}.
\]
Therefore 
\begin{equation} \label{e:cell}
\begin{split} 
\left\|   \mathrm{A}_{  V_{\circ, \mathsf{good}} ,s} F\right\|^2_{L^2(\R^3)} &\leq  \sum_{\mathsf{C} \in \vec{\mathsf{C}}_{\mathsf{good}}} \left\|  \mathrm{A}_{   V_\mathsf{C},s } F\right\|^2 _{L^2(\R^3)}  = \sum_{\mathsf{C} \in \vec{\mathsf{C}}_{\mathsf{good}}} \left\|  \mathrm{A}_{   V_\mathsf{C},s}F_{R_{\mathsf{C}}}\right\|^2_{L^2(\R^3)} 
 \\ 
 &\leq  K_{\frac NE}  \sum_{\mathsf{C} \in \vec{\mathsf{C}}_{\mathsf{good}}} \left\|    F_{R_{\mathsf{C}}}\right\|^2_{L^2(\R^3)} \leq    K_{\frac NE}   \bigg(\sup_{\beta\in \mathbb S^{2}} \sum_{\mathsf{C} \in \vec{\mathsf{C}}_{\mathsf{good}}} \cic{1}_{R_{\mathsf{C}}}(\beta) \bigg) \|F\|_{L^2(\R^3)}^2 \\ & \leq \Theta E K_{\frac NE}   \|F\|_{L^2(\R^3)}^2.
\end{split}
\end{equation} 
Collecting \eqref{e:pf320}, \eqref{e:pf322} with estimates \eqref{e:polybound}, \eqref{e:badbound} and \eqref{e:cell} completes the proof of \eqref{e:rec}. \end{proof}
\begin{proof}[Proof that \eqref{e:rec} implies Proposition \ref{p:single:card}] Observe that an application of \eqref{e:rec} with $E \simeq N/(\log N)^3$ gives the recursive estimate
	\[
	\frac{K_N}{ N } \leq \Theta +\Theta\frac{K_{(\log N)^3}}{(\log N)^3}
	\]
for some numerical constant $\Theta>0$. Iterating we get for any integer $k\geq 1$ that
	\[
	\frac{K_N}{ N } \lesssim \Theta^k +\Theta^k\frac{K_{(\log^{[k]} N)^3 }}{(\log^{[k]} N)^3 }.
	\]
However for any integer $M\geq 1$ we have the trivial estimate $K_{M}\lesssim M^2$ which together with the recursive estimate above gives
\[
K_N \lesssim \Theta^k N + \Theta^k(\log^{[k]}N)^3 N
\]
and completes the proof.
\end{proof}

\section{Notions from algebraic geometry, projection varieties, and polynomial partition}\label{sec:alggeom}
In this section, we turn to setting up the algebraic geometry definitions and  tools   needed in the proof of Theorem \ref{t:main:manifold}, beyond the notion of transverse complete intersection already introduced in Subsection \ref{ss:ppd}. We will also prove the Proposition \ref{prop:polpart}, namely the version of polynomial partition used throughout the paper. A short summary that can be used in parallel with reading the proof of Theorem \ref{t:main:manifold}  is the following:
\begin{itemize}
\item[\S\ref{ss:alggeom1}] definition and basic notions concerning   $m$-dimensional real algebraic varieties in $\R^n$;
\item[\S\ref{ss:alggeom2}] approximation of  $m$-dimensional real algebraic varieties with transverse complete intersections; approximate projection of a transverse complete intersection on a lower dimensional subspace;
\item[\S\ref{ss:alggeom3}] approximate polynomial partitioning in $\R^n$ and proof of Proposition \ref{prop:polpart}.
\end{itemize}

\subsection{Real algebraic varieties and dimension} \label{ss:alggeom1}
 For us a real algebraic variety in $\R^n$ is a set 
 \[
 \mathbf{Z}(I) \coloneqq \{x\in \R^n :\,P(x) =0 \quad \forall P \in I\}
 \]
where $I\subset \mathbb R[x_1,\ldots,x_n]$  is an  ideal. 
We possibly write $\mathbf{Z}(I) =\mathbf{Z}(P_1,\ldots, P_{J})$ if  $\{P_1,\ldots, P_J\}\subset  \mathbb R[x_1,\ldots,x_n] $ is a basis (finite generating set) for the ideal $I$.  

\subsubsection*{Count and degree} Following the approach of \cite[Section 4]{Sheffer}, we write $\mathsf{ct}\, \mathbf{Z}(I)$ to denote the \emph{count} of $\mathbf{Z}(I)$, that is least cardinality of a basis of the ideal $I$, and $\mathsf{deg}\, \mathbf{Z}(I)$ to denote the least integer $D$ such that there exist polynomials $P_1,\ldots, P_J$ with $  \mathrm{deg} \,P_j \leq D$ for all $j$,  that generate the ideal $I$.  

\subsubsection*{Dimension}The dimension $\mathsf{dim} \,  Z $ of a real algebraic variety $Z=\mathbf{Z}(I)$ is defined as in \cite[Def.\ 2.8.11]{BCR}; see also \cite{Zahl}.  Let  $\{P_1,\ldots, P_J\}\subset  \mathbb R[x_1,\ldots,x_n] $ be a basis  for the ideal $I$. If $\mathbf{Z}(I)$ has dimension $1\leq m\leq n$, we say that $x\in \mathbf{Z}(I)  $ is a \emph{smooth point}  if 
\[
\mathrm{rank}( \mathrm{D} \vec{P})(x) = n-m,
\]
where $\mathrm{D}\vec P $ is the Jacobian matrix of the vector valued function $\vec{P}:\,\R^{n}\to \R^{J}$, namely the $J\times n$ matrix whose $j$-th row is the vector $\nabla P_j=(\partial_{x_1} P_j, \ldots, \partial_{x_n} P_j) $. We say that $Z=\mathbf{Z}(I)$ is a \emph{smooth algebraic variety} of dimension $1\leq m\leq n$ if every point $x\in Z$ is a smooth point in dimension $m$.
The set of smooth points of an $m$-dimensional algebraic variety in $\R^n$ will be denoted by $Z^{\circ m}$.
A special important case of smooth $m$-dimensional algebraic varieties in $\R^n$ are the transverse complete intersections $Z=\mathbf{Z}(P_1,\ldots, P_{n-m})$ defined in \S\ref{ss:ppd}. 

We conclude our discussion of dimension of algebraic varieties by  recalling the following standard result; see \cite[Theorem 4.8]{Sheffer} for a proof.

\begin{lemma}[smooth components] \label{lemma:smoothcomp}Fix positive integers $J,D$ and $m \leq n$. There exists a constant $\Theta=\Theta_{J,D,m,n}$ such that the following holds. Let $Z=\mathbf Z(I)\subset \R^n$ be a real algebraic variety with \[\mathsf{ct}\, Z=J,\qquad \mathsf{deg}\, Z=D, \qquad \mathsf{dim}\, Z=m.\]  Then $Z'\coloneqq Z\setminus  Z^{\circ m}$ is a  real algebraic variety with $\mathsf{ct}\, W\leq \Theta,\, \mathsf{deg}\, W\leq \Theta$, and $\mathsf{dim}W<m$.
\end{lemma}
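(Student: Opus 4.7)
The plan is to exhibit $Z'$ as an algebraic variety cut out by the original generators of $I$ together with the vanishing of all maximal minors of the Jacobian, and then to control count and degree by explicit combinatorics while invoking standard real algebraic geometry for the dimension drop.

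First, fix a basis $\{P_1,\dots,P_J\}$ of the ideal $I$ with $\deg P_j\leq D$, realizing both the count and the degree of $Z$, and consider the $J\times n$ Jacobian matrix $\mathrm{D}\vec P$. Since $\dim Z=m$, the set $Z^{\circ m}$ of smooth points of dimension $m$ consists exactly of those $x\in Z$ for which $\mathrm{rank}\,\mathrm{D}\vec P(x)=n-m$; note that $n-m$ is also the maximal rank occurring on the $m$-dimensional components (otherwise a neighborhood in $Z$ of such a point would have local dimension $>m$, contradicting $\dim Z=m$, cf.\ \cite[Ch.\ 2]{BCR} style arguments referenced around \cite{Sheffer}). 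Consequently
\[
Z' = Z\setminus Z^{\circ m} = \bigl\{x\in Z\st \mathrm{rank}\,\mathrm{D}\vec P(x)<n-m\bigr\},
\]
and this rank condition is equivalent to the simultaneous vanishing of all $(n-m)\times(n-m)$ minors of $\mathrm{D}\vec P$. Letting $\{M_\alpha\}$ be the collection of these minors, I propose to take
\[
Z' \;=\; \mathbf Z\bigl(P_1,\dots,P_J,\{M_\alpha\}_\alpha\bigr),
\]
which is an algebraic variety in $\R^n$.

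Next I would bound count and degree of this presentation. The number of minors is $\binom{J}{n-m}\binom{n}{n-m}$, so the generating set above has cardinality at most $J+\binom{J}{n-m}\binom{n}{n-m}$, a quantity depending only on $J,n,m$; this controls $\mathsf{ct}\,Z'$. Each entry of $\mathrm{D}\vec P$ is a polynomial of degree at most $D-1$, so each $(n-m)\times(n-m)$ minor is a polynomial of degree at most $(n-m)(D-1)$; therefore every generator of our presentation has degree at most $\max\{D,(n-m)(D-1)\}$, bounding $\mathsf{deg}\,Z'$. Putting these two estimates together yields the desired constant $\Theta=\Theta_{J,D,m,n}$ for count and degree.

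The remaining point, and the one I expect to be the main technical obstacle, is the strict inequality $\dim Z'<m$. This is essentially the statement that the singular locus of a real algebraic variety has strictly smaller semialgebraic dimension than the variety itself. I would argue as follows: decompose $Z$ into its irreducible components $Z=\bigcup_i Z_i$; those with $\dim Z_i<m$ are contained in $Z'$ and trivially contribute dimension $<m$; for those with $\dim Z_i=m$, the set of $m$-smooth points is nonempty by definition of $\dim Z_i=m$ and is Zariski-open in $Z_i$, so its complement in $Z_i$ is a proper algebraic subvariety, hence of dimension strictly less than $m$ (see e.g.\ the Zariski-openness and density of the regular locus in \cite{BCR}). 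Taking the (finite) union over $i$ preserves the strict inequality, so $\dim Z'<m$. Combined with the count and degree bounds above, this completes the proof.
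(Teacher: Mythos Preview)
Your overall strategy---adjoin Jacobian minors to the generators of $I$ to cut out $Z'$---is the standard one, and indeed the paper does not give its own proof but simply cites \cite[Theorem~4.8]{Sheffer}. However, there is a genuine gap in your execution.

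You assert that $Z'=\{x\in Z:\mathrm{rank}\,\mathrm{D}\vec P(x)<n-m\}$, justified by the claim that rank exceeding $n-m$ would force local dimension $>m$. The inequality goes the other way: if $\mathrm{rank}\,\mathrm{D}\vec P(x)=r>n-m$, the implicit function theorem shows $Z$ is locally contained in an $(n-r)$-manifold with $n-r<m$, so the local dimension is \emph{smaller}, not larger. Consequently such points do arise whenever $Z$ has irreducible components of dimension strictly less than $m$. Concretely, take $I=(x_1x_2,\,x_1x_3)$ in $\R[x_1,x_2,x_3]$: then $Z=\{x_1=0\}\cup\{x_2=x_3=0\}$ has $m=2$, $n-m=1$, and at $(1,0,0)$ the Jacobian has rank $2$. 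This point lies in $Z'$ (it is not a smooth point of dimension $2$), yet the $1\times 1$ minors do not all vanish there, so it is missing from your proposed variety $\mathbf Z(P_1,\dots,P_J,\{M_\alpha\})$.

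The repair is to treat the rank $>n-m$ locus separately. On each irreducible component of dimension exactly $m$ your argument is correct: there the rank never exceeds $n-m$ (else a Zariski-open, hence $m$-dimensional, subset of that component would have Euclidean dimension $<m$), so the $(n-m)$-minor construction captures $Z'$ on the top-dimensional part. The residual points of $Z'$ lie on components of dimension $<m$, and one must exhibit those components as an algebraic set with controlled count and degree; this is where an inductive or iterated-Jacobian argument (as in the cited reference) is needed rather than a single pass with the minors. A minor additional point: count and degree of $I$ need not be realized by the \emph{same} basis, so you should either bound one in terms of the other (any ideal generated in degree $\leq D$ admits a generating set of cardinality $\leq\binom{n+D}{n}$) or carry two bases through the argument.
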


\subsubsection*{Classes of algebraic varieties} Finally, it will be convenient to us to introduce  pieces of notation for the classes of algebraic varieties involved in our theorems, namely:
\begin{itemize} 
\item[$\diamond$] $\mathcal Z_{m,n}(D,J)$ is the collection of real algebraic varieties $Z$ in $\R^n$ with 
$\mathrm{dim} \, Z =m, \, \mathrm{deg} \, Z\leq  D, \mathrm{ct} \, Z\leq  J$.
\item[$\diamond$] $\mathcal Z^\times_{m,n}(D)$ is the collection of transverse complete intersections $Z$ in $\R^n$ with 
$\mathrm{dim} \, Z =m, \, \mathrm{deg} \, Z\leq  D$. This is the collection appearing in the quantitative estimate \eqref{e:main:uniform} of Theorem \ref{t:main:manifold}.
\end{itemize}

\subsection{Approximation by and projection of transverse complete intersections} \label{ss:alggeom2} The ultimate aim of this subsection is to show that any real algebraic variety of dimension $m$ can be approximated in a suitable sense by a controlled number of transverse complete intersections of dimension up to $m$ and of controlled degree.

We begin with detailing a continuity property of   nonsingular zero sets, which are  semialgebraic sets; see \cite{BCR}. It is convenient to introduce some pieces of notation before the statement. Given $1\leq c\leq n$ and polynomials $\{P_j:1\leq j \leq c\}\subset \R[x_1,\ldots, x_n]$ we write
$\vec P=(P_1,\ldots, P_c) :\,\R^n \to \R^c$. Let $\tau$ be an indexing of the ${n}\choose{c}$ choices of $c\times c$ minors $\mathrm{D}_\tau \vec P  $ of $\mathrm{D} \vec P$ and  for each $\tau$ we  write $\Delta_\tau \vec{P} \coloneqq \mathrm{det} \,\mathrm{D}_\tau \vec P $, which is a polynomial of degree at most $D^{c}$. In the statements below the notion of distance $\dist(U,V)$ is as in \S\ref{sec:not}.

\begin{lemma} \label{lemma:approx} Let $P_1,\ldots,P_c\in \mathbb R[x_1,\ldots,x_n]$ and $Z=\mathbf{Z}(P_1,\ldots, P_c)$. For $\rho,R>0$, let
\begin{equation}
\label{e:urhor}
U_{\rho, R} \coloneqq \Big\{x\in Z :\, |x|< R, \, \max_{\tau} |\Delta_\tau \vec{P} | \geq \rho \Big\}.
\end{equation}
For $\vec \alpha=(\alpha_1,\ldots,\alpha_c)\in \mathbb R^c$, $\vec \beta=(\beta_1,\ldots,\beta_{n-1})\in \R^{n-1}$  define the algebraic varieties
\[
\begin{split}
&  V_{\vec\alpha}\coloneqq\mathbf{Z}(P_1+\alpha_1, \ldots, P_c+\alpha_c),
\\
& W_{\vec\beta}\coloneqq\mathbf{Z}(P_{1,\vec \beta},P_2,\ldots, P_c),  \\ & \textrm{\emph{where}}\; P_{1,\vec{\beta}}(x_1,\ldots,x_n) \coloneqq P_1(x_1 + \beta_1 x_n, \ldots,x_{n-1}+\beta_{n-1}x_n, x_n).
\end{split}
\] 
Then for all $\eps>0$ there exists $\delta=\delta(\eps,\rho,R, \vec{P})>0$ such that
\begin{equation}
\label{e:implicit}
|\vec \alpha| +  |\vec \beta|<\delta\implies 
\mathrm{dist}(U_{\rho,R},V_{\vec\alpha}) + \mathrm{dist}(U_{\rho,R},W_{\vec\beta}) <\eps.
\end{equation}
In addition, if $Z$ is a transverse complete intersection, so are $V_{\vec\alpha}$ and $W_{\vec\beta}$. 
\end{lemma}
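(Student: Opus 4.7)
\medskip
\noindent\textbf{Proof plan for Lemma~\ref{lemma:approx}.} The plan is to deduce \eqref{e:implicit} from a quantitative version of the implicit function theorem, applied uniformly in $x\in U_{\rho,R}$. Fix $x\in U_{\rho,R}$ and choose $\tau$ realizing $|\Delta_\tau\vec P(x)|\ge \rho$; after relabeling variables I may assume $\tau$ corresponds to the first $c$ coordinates. Writing $x=(y,z)\in\R^c\times\R^{n-c}$, the $c\times c$ block $\partial_y\vec P(x)$ is invertible with $\|(\partial_y\vec P(x))^{-1}\|\le \Theta_{\vec P,R}\,\rho^{-1}$ by Cramer's rule, since all higher derivatives of $\vec P$ are uniformly bounded on the ball $|x|<R$ by a constant depending only on $\vec P$ and $R$. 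The quantitative inverse function theorem then produces $r=r(\rho,\vec P,R)>0$ and a $C^1$ map $\varphi_x$ defined on a ball of radius $r$ in $\R^c\times \R^{n-c}$ such that $\vec P(\varphi_x(w,z'),z')=w$ whenever $|w|+|z'-z|<r$, with $\|\mathrm{D}\varphi_x\|\le \Theta_{\vec P,R}\rho^{-1}$.

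Now given $\eps>0$, choose $\delta\in(0,r)$ small enough (depending on $\eps,\rho,\vec P,R$) so that $\Theta_{\vec P,R}\rho^{-1}\delta<\eps$. For $|\vec\alpha|<\delta$, applying $\varphi_x$ with $w=-\vec\alpha$ and $z'=z$ yields a point $(\varphi_x(-\vec\alpha,z),z)\in V_{\vec\alpha}$ within distance $\eps$ of $x$, giving $\dist(U_{\rho,R},V_{\vec\alpha})<\eps$. For the shear perturbation $W_{\vec\beta}$, I observe that $P_{1,\vec\beta}$ differs from $P_1$ by a term that is $O(|\vec\beta|)$ in $C^1$ on $\{|x|<R\}$; re-running the same inverse function argument with $(P_{1,\vec\beta},P_2,\dots,P_c)$ in place of $\vec P$ and noting that for $|\vec\beta|<\delta$ the corresponding block remains invertible with the same quantitative bound gives a point of $W_{\vec\beta}$ within $\eps$ of $x$. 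A standard compactness-of-the-parameter-space argument (covering the closure of $U_{\rho,R}$, where one works with a slightly enlarged set $U_{\rho/2,R+1}$ to allow some slack) makes the choice of $\delta$ uniform in $x\in U_{\rho,R}$.

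For the TCI claim, the crucial observation is that the Jacobian matrices $\mathrm{D}\vec P$ and $\mathrm{D}(\vec P+\vec\alpha)$ are \emph{identical}, since adding constants does not affect derivatives. Thus the wedge $\Lambda(x)\coloneqq \nabla P_1(x)\wedge\cdots\wedge\nabla P_c(x)$ is the same algebraic object governing transversality of $V_{\vec\alpha}$ for every $\vec\alpha$. Since $Z$ is a TCI, $\Lambda$ is nonvanishing on $Z$; combined with the IFT output above (which produces, for each $x\in V_{\vec\alpha}$ in any fixed bounded region, a nearby point of $Z$ where $\Lambda\ne 0$), continuity of $\Lambda$ and further shrinking of $\delta$ ensure $\Lambda\ne 0$ on $V_{\vec\alpha}\cap\{|x|<R\}$. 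For the shear $W_{\vec\beta}$, one uses that $\nabla P_{1,\vec\beta}$ converges to $\nabla P_1$ uniformly on compact sets as $\vec\beta\to 0$, so the corresponding wedge product is a $C^0$-small perturbation of $\Lambda$, hence nonvanishing on $W_{\vec\beta}$ near $Z$.

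The principal obstacle I anticipate is exactly this TCI-preservation step: the lemma is applied via $U_{\rho,R}$ but asserts a global TCI conclusion on $V_{\vec\alpha},W_{\vec\beta}$. In practice the lemma will be used on bounded regions (where all our sets of directions live, e.g.\ in $\mathcal A_n(1)$), so the statement is to be read in the appropriate localized sense; alternatively one invokes Sard's theorem to observe that for almost every small perturbation parameter the perturbed variety is globally a TCI, and then takes such a parameter within the $\delta$-ball. The rest of the argument is a routine quantitative implicit function theorem exercise whose uniformity comes from the compactness of the closure of $U_{\rho,R}$ within the open set $\{|x|<R+1,\ \max_\tau|\Delta_\tau \vec P|>\rho/2\}$.
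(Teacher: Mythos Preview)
Your approach is essentially the same as the paper's: apply a quantitative implicit function theorem at each point of $U_{\rho,R}$, using the lower bound $\rho$ on the Jacobian minor to get uniform constants, and conclude by compactness. The paper phrases the $W_{\vec\beta}$ case slightly differently, applying the IFT to the joint map $F(x,\vec\beta)=(P_{1,\vec\beta}(x),P_2(x),\ldots,P_c(x))$ with $\vec\beta$ as an additional parameter, but this is equivalent to your perturbative version.

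Your hesitation about the TCI claim is well-placed and worth flagging explicitly. The paper's proof is equally brief here, asserting only that TCI-preservation is ``a byproduct of the application of the implicit function theorem above''; but the IFT argument is local to $U_{\rho,R}$ and cannot by itself yield a \emph{global} TCI conclusion. Indeed the global statement is false as written: take $c=1$, $n=2$, $P(x,y)=(x^2-1)^2+y^2-\epsilon$ with $0<\epsilon<1$ small; then $Z$ is a TCI, yet $V_{\epsilon}$ contains the critical points $(\pm1,0)$. Your proposed resolution---invoking a Sard-type argument to select a generic small parameter for which the perturbed variety is globally a TCI---is exactly what the paper does in practice: in Proposition~\ref{p:smoothapprox} the TCI property of $V_{\sigma,\vec\alpha}$ is obtained from \cite[Lemma~5.1]{Guth2} for almost every $\vec\alpha$, and in Lemma~\ref{lemma:algproj} one chooses a ``suitable'' $\vec\beta$. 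So the lemma should be read (and you should state it) with the TCI conclusion holding for generic parameters in the $\delta$-ball, not for all of them; with that caveat your proof is complete.
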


\begin{proof} We prove the approximation claim for $W_{\vec\beta}$, a similar but simpler proof works for $V_{\vec\alpha}$. Pick a point $\bar x\in U_{\rho,R}$.
By symmetry, suppose that the $c\times c$ matrix $A\coloneqq \{\partial_{x_j} P_{k}(\bar x):\, 1\leq j,k\leq c\}$  is invertible and $|\mathrm{det}\, A|\geq \rho$. Let $F=F(x,\vec\beta):\R^{n} \times \R^{n-1} \to \R^c$ be the vector valued function with $F_1(x,\vec\beta) \coloneqq R_{1,\vec \beta}(x) $ and $F_k(x)\coloneqq  P_k (x) $ for $2\leq k \leq c $. Then $A$ is a $c\times c$ invertible minor of $DF(\bar x,0)$. By the implicit function theorem, for $1\leq j\leq c$ we find smooth functions $h_j=h_j(x_{c+1},\ldots,x_n,\vec\beta)$ defined for $|\vec \beta|+ \sup_{j>c} |x_{j}- \bar x_j|$ sufficiently small such that 
\[ 
\begin{split}
&h_j(\bar x_{c+1},\ldots,\bar x_n,\vec 0_{\R^{n-1}})= \bar x_j, \\ &  R_{1,\vec\beta}\big( h_1(x_{c+1},\ldots,x_n,\vec\beta), \ldots,h_c(x_{c+1},\ldots,x_n,\vec \beta), {x}_{c+1},\ldots,{x}_n\big)=0.
\end{split}
\]
Then \eqref{e:implicit} follows 
from the above display and  standard estimates on the $L^\infty$-norms of the derivatives of $h_j$, obtained from the lower bound on $|\mathrm{det}\, A|$.   
The claim that if $Z$ is a transverse complete intersection so are $V_{\vec\alpha}$, $W_{\vec\beta}$, is actually a byproduct of the application of the implicit function theorem above. 
\end{proof}
Using Lemma \ref{lemma:approx} in combination with the smooth component Lemma \ref{lemma:smoothcomp}    we obtain the following approximation result by transverse complete intersections. 
\begin{proposition} \label{p:smoothapprox}Let $Z=\mathbf{Z}(I)$ be an algebraic variety in $\R^n$ of dimension $0\leq m\leq n$,  degree $D$, and $\{P_1,\ldots, P_J\}$ be a generating set for $I$. Let $\eps>0, R\geq 1$.  Then there exists a set $Z_0\subseteq Z$ with $\#Z_0\leq\Theta$, and collections of transverse complete intersections $\mathcal Z_\mu(Z)\coloneqq\{Z_{\mu,\vartheta}:\, 1\leq \vartheta \leq \Theta\}\subset \mathcal Z_{\mu,n} ^\times (\Theta)$, such that 
\[
\dist\left(Z \cap \mathcal A_{n}(R)\, , Z_0 \cup \bigcup_{\mu=1}^m \bigcup_{\vartheta=1}^\Theta Z_{\mu,\vartheta} \right) <\eps.
\]
The constant  $\Theta=\Theta_{m,n,J,D}$ depends only on the indicated parameters and can be explicitly computed.
\end{proposition}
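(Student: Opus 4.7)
The plan is to proceed by induction on the dimension $m$. The base case $m=0$ is immediate: a zero-dimensional real algebraic variety in $\R^n$ with generator count $J$ and degree $D$ consists of at most $\Theta_{J,D,n}$ points by a Bezout-type argument, so we may simply take $Z_0\coloneqq Z\cap \mathcal A_n(R)$. For the inductive step, the first move is to peel off the singular stratum of $Z$: by Lemma \ref{lemma:smoothcomp}, the set $Z'\coloneqq Z\setminus Z^{\circ m}$ is a real algebraic variety with $\mathsf{dim}\,Z'<m$ and with count and degree bounded by $\Theta_{J,D,m,n}$, so the inductive hypothesis applied to $Z'$ on a slightly enlarged ball produces the required $\frac{\eps}{3}$-approximation by a finite set together with TCIs of dimensions $1,\ldots,m-1$.

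The heart of the argument is the approximation of the smooth part $Z^{\circ m}\cap \mathcal A_n(R)$ by $m$-dimensional TCIs. I would index the relevant data by pairs $(S,\tau)$, where $S=\{i_1,\ldots,i_{n-m}\}\subset\{1,\ldots,J\}$ selects $n-m$ generators defining $\vec P^S\coloneqq(P_{i_1},\ldots,P_{i_{n-m}})$, and $\tau$ enumerates the $(n-m)\times(n-m)$ minors of $\mathrm D \vec P^S$, and consider the sets $U^{S,\tau}_{\rho,R+1}$ of \eqref{e:urhor}. By definition of a smooth point, the function $f(x)\coloneqq \max_{S,\tau}|\Delta_\tau \vec P^S(x)|$ is strictly positive on $Z^{\circ m}$, and a standard compactness-continuity argument shows that the sublevel set $\{x\in Z\cap\overline{B(0,R+1)}:f(x)<\rho\}$ retracts in Hausdorff distance onto $Z'$ as $\rho\downarrow 0$. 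Hence for $\rho$ sufficiently small the uncovered portion $Z\cap\overline{B(0,R+1)}\setminus \bigcup_{S,\tau}U^{S,\tau}_{\rho,R+1}$ lies in the $\frac{\eps}{3}$-neighborhood of $Z'$ and is absorbed by the inductive step.

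To replace each $U^{S,\tau}_{\rho,R+1}$ by a genuine TCI, apply Lemma \ref{lemma:approx} with the perturbation $V^S_{\vec\alpha}\coloneqq \mathbf Z(P_{i_1}+\alpha_1,\ldots,P_{i_{n-m}}+\alpha_{n-m})$, which $\frac{\eps}{3}$-approximates $U^{S,\tau}_{\rho,R+1}$ once $|\vec\alpha|$ is small enough. The subtle point is that $V^S_{\vec\alpha}$ need not itself be a TCI: the original $\vec P^S$ may have rank deficiencies on points outside $U^{S,\tau}_{\rho,R+1}$, and the last clause of Lemma \ref{lemma:approx} does not directly apply. I would resolve this by invoking Sard's theorem for the polynomial map $\vec P^S\colon \R^n\to\R^{n-m}$: the set of regular values is dense, so $-\vec\alpha$ of arbitrarily small norm can be chosen so that \eqref{e:wedge1} holds throughout $V^S_{\vec\alpha}$, placing it in $\mathcal Z^\times_{m,n}(D)$.

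The main obstacle will be the bookkeeping of constants under a single $\Theta_{J,D,m,n}$. This is largely combinatorial: the number of pairs $(S,\tau)$ depends only on $J,n,m$; translation preserves the degree of each $P_j$; and Lemma \ref{lemma:smoothcomp} ensures that the complexity of $Z'$, and hence the complexity introduced at each inductive step, is fully controlled by the original parameters. What I expect to require the most care is the compactness-continuity step that passes from the pointwise positivity of $f$ on $Z^{\circ m}$ to a single $\rho$ working uniformly up to the $\eps/3$-neighborhood of the singular stratum; once this is cleanly set up, the three contributions---the inductive closure on $Z'$, the nearly-singular points, and the perturbed TCIs $V^S_{\vec\alpha}$---combine to give the full $\eps$-approximation.
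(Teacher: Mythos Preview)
Your proposal is correct and follows essentially the same route as the paper: induction on $m$, peeling off the singular locus via Lemma~\ref{lemma:smoothcomp} and handling it inductively, covering the smooth part by the sets $U^{\sigma}_{\rho,R}$ of Lemma~\ref{lemma:approx} for a small $\rho$ obtained by compactness, and then perturbing each $\mathbf Z(\vec P^\sigma)$ by a generic constant shift to obtain genuine TCIs. The only cosmetic difference is that where you invoke Sard's theorem to guarantee that $V^S_{\vec\alpha}$ is a TCI for generic $\vec\alpha$, the paper cites the equivalent \cite[Lemma~5.1]{Guth2}; your identification of this as the ``subtle point'' is apt, and your resolution is the same as the paper's.
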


\begin{proof} We prove the proposition by induction on the dimension $m$. If $Z$ has dimension zero, then $Z\cap \mathcal A_n(R)$ is a finite set with $\#Z\leq \Theta $ points, whence the claim holds with $Z_0=Z$ and empty collections $\mathcal Z_\mu$. If $Z$ has dimension $n$, there is nothing to prove.

We now deal with the inductive step  for which some preliminary notation is needed. Let $1\leq m \leq n-1$, $Z$ and $\{P_1,\ldots, P_J\}$ be as in the statement of the lemma. Let $c=n-m$ and  $\sigma$ be a choice of increasing $c$-tuple of indices $\{j_1,\ldots, j_c\}\subset \{1,\ldots, J\}$; there are ${J}\choose{c}$ choices. Let $\vec P_\sigma:\,\R^n \to \R^c$ be the vector-valued function with $(\vec P_\sigma)_k\coloneqq   P_{j_k}$ and $\Delta_\tau \vec P_\sigma= \mathrm{det} \mathrm{D}_\tau \vec P_\sigma $, as in the statement of Lemma~\ref{lemma:approx}. Calling
\[
Z'\coloneqq Z \cap \mathbf{Z}\big(\{\Delta_\tau \vec P_\sigma:\sigma, \tau\}\big),
\]
that is $Z'$ is the complement in $Z$ of the set of smooth points $Z^{\circ m}$, we gather from Lemma \ref{lemma:smoothcomp} that $Z'\in \mathcal Z_{\bar{\mu},n}(\Theta, \Theta)$ for some $0\leq\bar \mu< m$. We now partition
\[
Z\cap \mathcal A_n(R) = U \cup W 
\]
where $W$ is the intersection of $Z\cap \mathcal A_n(R) $ with the $\eps/2$ neighborhood of $Z'$. By the inductive assumption applied to $Z'\cap A_n(R)$ with $\eps/2$ in place of $\eps$, we may find $Z_0$ with $\#Z_0\leq \Theta$ and families $\mathcal Z_{\mu}(Z')\subset \mathcal Z_{\mu,n}(\Theta)$ for $1\leq \mu \leq \bar \mu$ each with at most $\Theta$ elements so that
\begin{equation}
\label{e:appind1}
\dist\left(W \cap \mathcal A_{n}(R)\,, Z_0 \cup \bigcup_{\mu=1}^m \bigcup_{\vartheta=1}^\Theta Z_{\mu,\vartheta} \right) <\eps.
\end{equation}
We are left with treating the $U$-component. Notice that, as we have excised an $\eps/2$-neighborhood of $\mathbf{Z}(\{\Delta_\tau \vec P_\sigma:\sigma, \tau\})$, there exists $\rho>0$ such that
\[
U\subset\bigcup_\sigma U^\sigma, \qquad U^\sigma \coloneqq\Big\{x\in \mathbf{Z}(\vec P_\sigma):\, |x|\leq 2R, \, \max_{\tau} \min_{x\in U} |\Delta_\tau \vec P_\sigma(x)| \geq \rho  \Big\}.
\]
Now we may apply Lemma \ref{lemma:approx} to $\vec P=\vec P_\sigma$, with $U^\sigma$ in place of $U_{\rho,2R}$ and obtain that 
\[
\dist\left(U_\sigma,  V_{\sigma, \vec \alpha} \right) <\eps, \qquad V_{\sigma, \vec \alpha}\coloneqq \mathbf{Z}(P_{\sigma(1)}+\alpha_1,\ldots, P_{\sigma(c)}+\alpha_{c}),
\]
provided $|\vec \alpha|< \delta(\eps, \rho)$ is sufficiently small. We also note that a recursive application of \cite[Lemma 5.1]{Guth2} tells us that  the variety
$
V_{\sigma,\vec \alpha}  
$ 
is a transverse complete intersection in $\mathcal Z^\times _{m,n}(D)$ for almost all $\vec \alpha\in \mathbb R^{c}$. Choosing one such $\vec\alpha$ for each $\sigma$, we obtain
\begin{equation}
\label{e:appind2}
\dist\left(U  ,   \bigcup_{\sigma}V_{\sigma,\vec \alpha}  \right) <\eps.
\end{equation}
Combining \eqref{e:appind1} with $\eqref{e:appind2}$ completes the inductive step.
\end{proof}

The next lemma will be used in the course of the proof of Theorem \ref{t:main:manifold} to control the contribution to our maximal averaging operator of (a piece of) a $m$-dimension\-al transverse complete intersection lying $s$-close to the $n-1$ dimensional hyperplane $x_n=0$, by a $m$-dimensional algebraic variety \emph{lying on} the hyperplane $x_n=0$. 

\begin{lemma}[approximate projection of a transverse complete intersection] \label{lemma:algproj}Let \[Z=\mathbf{Z}(P_1,\ldots, P_{n-m})\] be a transverse complete intersection in $\R^n$ with $\mathsf{deg}\, Z=D$. Let
\[
U \coloneqq Z \cap  \big\{x\in \R^n:\,  1\leq |x| <2,\, |x_n| < s\}
\]
for some $0<s< \frac12$.  There exists an algebraic variety $W$ with the following properties:
\begin{itemize}
\item[(i)] $W\subset e_n^\perp\coloneqq\{x\in \R^n:\,  x_n=0\} $;
\item[(ii)] $\mathsf{dim} W\leq m $;
\item[(iii)]$\mathsf{ct}\, W + \mathsf{deg} W\leq \Theta_{n,m,D}$;
\item[(iv)]  $\mathrm{dist}(U,W)<2s$.
\end{itemize}
\end{lemma}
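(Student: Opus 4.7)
The plan is to take $W$ to be (the embedding in $e_n^\perp$ of) the zero set of the elimination ideal obtained by eliminating the last coordinate from the ideal generated by $P_1, \ldots, P_{n-m}$, and then verify the four properties; only (iii) will require genuinely nontrivial input.

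Concretely, let $I \coloneqq \langle P_1, \ldots, P_{n-m}\rangle \subset \R[x_1, \ldots, x_n]$, let $J \coloneqq I \cap \R[x_1, \ldots, x_{n-1}]$ be the elimination ideal, and set
\[
\widetilde W \coloneqq \mathbf{Z}(J) \subset \R^{n-1}, \qquad W \coloneqq \{(y,0) : y \in \widetilde W\} \subset e_n^\perp.
\]
Property (i) is automatic by construction. For (iv), fix $x = (x_1, \ldots, x_n) \in U \subseteq Z$ and $Q \in J \subset I$: then $Q$ does not depend on $x_n$ and vanishes on $Z$, so $Q(x_1, \ldots, x_{n-1}) = Q(x) = 0$. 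Hence $y \coloneqq (x_1, \ldots, x_{n-1}, 0) \in W$ and $|x - y| = |x_n| < s < 2s$.

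For (ii), $\widetilde W$ is (contained in) the real Zariski closure of the image of $Z$ under the coordinate projection $\pi \colon \R^n \to \R^{n-1}$. Being a polynomial image, $\pi(Z)$ is a semialgebraic set of real dimension at most $\dim Z = m$, and taking real Zariski closure preserves this bound (see \cite{BCR}); hence $\dim W = \dim \widetilde W \leq m$.

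The genuine difficulty is (iii): one needs an a priori bound, depending on $n, m, D$ alone, on the minimum number of generators of $J$ and on their degrees. My plan is to invoke the algorithmic theory of Gr\"obner bases, as referenced in the introduction of the paper: compute a Gr\"obner basis of $I$ with respect to a lexicographic order prioritizing $x_n$, so that the basis elements independent of $x_n$ generate $J$, then apply classical effective bounds (of Hermann or Mayr--Meyer type, or equivalently an effective form of Tarski--Seidenberg quantifier elimination) on the size and degree of the resulting basis in terms of $D$ and $n$. Only the qualitative existence of such a $\Theta_{n,m,D}$ is needed, so no sharp form is required; this is the main (and essentially only) obstacle, while the rest of the argument is near-tautological once the elimination ideal construction is in place.
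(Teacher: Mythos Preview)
Your overall strategy---define $W$ via the elimination ideal $J=I\cap\R[x_1,\ldots,x_{n-1}]$ and control its complexity through effective Gr\"obner basis degree bounds---is exactly the route the paper takes for (i), (iii) and (iv); the bounds you allude to are precisely those of Dub\'e and Latyshev that the paper invokes, combined with the elimination theorem.

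Your verification of (ii), however, contains a genuine gap. You assert that $\widetilde W=\mathbf Z(J)$ is \emph{contained in} the real Zariski closure of $\pi(Z)$, but the containment goes the other way: polynomials in $J$ vanish on $\pi(Z)$, so $\overline{\pi(Z)}^{\mathrm{Zar}}\subseteq\mathbf Z(J)$, whereas a polynomial in $\R[x_1,\ldots,x_{n-1}]$ vanishing on $\pi(Z)$ need not lie in the ideal $I$ generated by the $P_j$, hence need not lie in $J$. As a result $\dim\mathbf Z(J)$ can exceed $m$. Concretely, in $\R^3$ take $P_1=Qx_1$, $P_2=Qx_2$ with $Q(x)=x_1^2+x_2^2+x_3^2+1$. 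Then $Z=\{x_1=x_2=0\}$ is a one-dimensional transverse complete intersection, but every element of $I=(P_1,P_2)$ is divisible by $Q$, which depends nontrivially on $x_3$; therefore $J=I\cap\R[x_1,x_2]=(0)$ and $\widetilde W=\R^2$, of dimension $2>m=1$.

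The paper handles exactly this obstruction by a preliminary reduction that your argument omits. Before forming the elimination ideal, it performs a small shear of the first $n-1$ coordinates against $x_n$ (replacing $P_1$ by $P_1(x_1+\delta_1x_n,\ldots,x_{n-1}+\delta_{n-1}x_n,x_n)$ for suitably small $\delta_j$, using Lemma~\ref{lemma:approx} to ensure the perturbed variety stays within $s$ of the original) so that the new ideal contains a polynomial that is monic in $x_n$. It then passes to the complex zero set $\widehat Z$, where the monic condition allows one to invoke the extension theorem and identify $\mathbf Z_{\mathbb C}(J)$ with the honest projection $\Pi\widehat Z$; the dimension bound for $W=\mathbf Z_{\mathbb C}(J)\cap\R^{n-1}$ is then read off from the complex picture. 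The shear is not cosmetic: in the example above it replaces $P_1$ by a polynomial no longer divisible by $Q$, which is precisely what forces $J$ to become nontrivial and cuts $\widetilde W$ down to the correct dimension.
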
 

\begin{proof} We identify $e_n^\perp$ with $\R^{n-1}$.  The proof includes a technical reduction which  we first include in our assumptions, postponing its justification until the end of the argument.
	
\vskip1mm
\paragraph{\textit{Claim.}} The ideal $I\subset\R[x_1,\ldots,x_n]$ generated by $P_1,\ldots, P_{n-m}\,$ contains a polynomial $f$  of degree $d$ where the monomial  $x_n^d$ appears with nonzero coefficient, for some $d\geq 1$. 
\vskip1mm
We proceed assuming the claim.  Define  
\[
\widehat Z\coloneqq \{z \in \mathbb C^n:\, P(z)=0 \quad \forall P \in I \}.
\]
Observe that $\widehat Z$ is a complex variety of dimension $m$ which is invariant under complex conjugation and  $Z=\widehat Z \cap \R^n$. Let $\Pi:\,\mathbb C^n \to \mathbb C^{n-1}$  indicate the canonical projection erasing the last coordinate.
By virtue of our claim we may apply \cite[Theorem 1.68]{DePf} (see also \cite[Theorem 3.3]{MP}) and obtain that 
\[
\widehat{W}\coloneqq \Pi \widehat Z = \{z \in \mathbb C^{n-1}:\,P(z)=0 \quad \forall P \in I' \},
\] 
where $I'$ is the first elimination ideal of $I$, namely $I'=I \cap \R[x_1,\ldots,x_{n-1}]$. Such a $\widehat{W}$ is a complex algebraic variety of dimension $m$. We may find a generating set of   for  $I'$ by arguing as follows. According to the results of Dub\'e \cite{Dube} and Latyshev \cite{Latyshev}, we may pick a Gr\"obner basis for $I$, which we denote $\{Q_j:1\leq j \leq J \}$, $Q_j\in\R[x_1,\ldots,x_n]$,  whose degree $\Delta\coloneqq \max \mathsf{deg}\, Q_j$, and cardinality $J$, is bounded by a constant depending  only on $D,n,m$. Using the elimination theorem \cite[Chap.\ 2 Theorem 3]{CLO}, a (Gr\"obner) basis for $I'$ is given by those $Q_j$ which do not depend on the coordinate $x_n$: this completes the claim about the generating set of $I'$. For definitions and constructive algorithms leading to Gr\"obner bases we send to the monographs \cite{CLO2,CLO} and references therein. 

At this point we denote $W\coloneqq\widehat{W} \cap \R^{n-1}$ and $W$ satisfies (i) by construction. It is immediate to verify that $W$ contains the projection of $Z$ on the last $n-1$ coordinates, whence the claim (iv) follows; in fact the stronger $\mathrm{dist}(U,W)<s$ holds. Furthermore, the fact that $W$ is a real algebraic variety of dimension $\leq m$, namely claim (ii), follows by a straightforward application of \cite[Lemma 2.1]{MP}. As $W=\mathbf{Z}(I')$, the upper bound of claim (iii) is a consequence of the above construction of a generating set for $I'$. This last assertion completes the proof of the  lemma up to the verification of the preliminary claim, which follows next.
 
We make sure that the claim holds by perturbing $Z$ slightly. For $\delta_1,\ldots, \delta_{n-1} \in \R$ consider the polynomial 
\[
f(x)\coloneqq P_1(x_1+\delta_1 x_n,\ldots, x_{n-1}+\delta_{n-1} x_{n},x_n). 
\]
As noticed in \cite[Lemma 3.4]{MP}, for all $\delta>0$ there exist $\delta_1,\ldots, \delta_{n-1}$ with $\max |\delta_j|<\delta$ such that $f$ satisfies the condition required in the claim. We take advantage of Lemma \ref{lemma:approx} and obtain that for a suitable choice of $\delta_j$ 
\[
\tilde Z\coloneqq  \mathbf{Z}(f,P_2,\ldots, P_{n-m}) 
\]
is a transverse complete intersection satisfying the claim and $\dist(U,\tilde Z)<2^{-10}s$. In particular we may find a set $\tilde U\subset \tilde Z \cap  \big\{x\in \R^n:\,  1-2^{-8}\leq |x| <2+2^{-8}, \, |x_n| < (1+2^{-8})s\}$ such that $\dist(U,\tilde U)<2^{-10}s$. Applying the above proof to $\tilde U$ in place of $U$ with the slightly different value of $s$ completes the reduction and therefore the proof of the Lemma.
\end{proof}

\subsection{Approximate polynomial partitioning on $\R^n$ and  proof of Proposition \ref{prop:polpart}} \label{ss:alggeom3}
We now begin the proof of Proposition \ref{prop:polpart}. One of the main tools is the following approximate polynomial partitioning theorem on $\R^n$ which can be seen as the adaptation to our setting  of the Guth-Katz polynomial partitioning theorem  \cite{GK} and of its hypersurface refinement due to Zahl \cite{Zahl}. These results are appealed to in the proof.

\begin{lemma}[Approximate polynomial partitioning on $\R^n$] \label{lemma:approxPP} Let $V\subset \R^n$ be a finite point set with $\# V\leq N^n$ and $\delta>0$. There exists a  polynomial $Q\in \R[x_1,\ldots,x_n]$ of degree $\leq \Theta_n E$ and   $\eps_0>0$ such that 
\begin{itemize}
\item[1.] Each of the $\leq \Theta_n E^n$ connected components of $\R^n\setminus\mathbf{Z}(Q)$ contains at most at most $(N/E)^n$ points of $V$.
\item[2.] If  $|\eps|<\eps_0$, then $\dist(V\cap \mathbf{Z}(Q),\mathbf{Z}(Q+\eps))<\delta$.
\end{itemize}
\end{lemma}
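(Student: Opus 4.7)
For item 1, I would apply the Guth-Katz polynomial partitioning theorem \cite{GK} to $V$, producing a polynomial $Q$ of degree $\leq \Theta_n E$ whose zero set partitions $\R^n$ into at most $\Theta_n E^n$ connected components (by Milnor-Thom for a polynomial of degree $\leq \Theta_n E$), each containing at most $\#V/E^n \leq (N/E)^n$ points of $V$. I would then pass to the square-free part of $Q$, which has the same zero set and the same cell structure, and whose singular locus $\Sigma(Q)\coloneqq\{x\in\R^n:Q(x)=0,\,\nabla Q(x)=0\}$ is an algebraic variety of dimension $\leq n-2$.

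For item 2, the implicit function theorem is the key tool: at every wall point $v\in V\cap\mathbf{Z}(Q)$ with $\nabla Q(v)\neq 0$, there exist $\eps_v>0$ and a neighborhood $U_v\subset B(v,\delta)$ in which the equation $Q(x)=-\eps$ is solvable for every $|\eps|<\eps_v$, yielding a point $v'\in\mathbf{Z}(Q+\eps)$ within $\delta$ of $v$. Since $V\cap\mathbf{Z}(Q)$ is finite, $\eps_0\coloneqq \min_v\eps_v$ supplies the required uniform constant.

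The task therefore reduces to arranging $V\cap\Sigma(Q)=\emptyset$, i.e., that no point of $V$ is a singular point of $\mathbf{Z}(Q)$. I would do this by perturbing $Q$ to $Q+\tau R$ with $\tau>0$ small and $R$ a polynomial of degree $\leq \Theta_n E$ chosen to vanish on $V\cap\mathbf{Z}(Q)$ and to have $\nabla R(v)$ prescribed so that $\nabla Q(v)+\tau\nabla R(v)$ is a nonzero vector. The vanishing condition keeps these points on the perturbed wall; for each $v\in V\setminus\mathbf{Z}(Q)$, $Q(v)\neq 0$, so $Q(v)+\tau R(v)$ retains the sign of $Q(v)$ for $\tau$ sufficiently small, preserving the cell distribution; and for a generic $\tau$ no new point of $V$ lands on the wall.

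\textbf{Main obstacle.} The principal difficulty lies in the simultaneous feasibility of the perturbation. One must verify that the polynomial space of degree $\leq \Theta_n E$, of dimension $\sim E^n$, contains an $R$ satisfying both the vanishing and the prescribed-gradient conditions on the finite set $V\cap\mathbf{Z}(Q)$, and that the generic perturbation does not alter the topology of the partition by merging cells at singular points of $\mathbf{Z}(Q)$ (a phenomenon like $xy\mapsto xy+\tau$, where two opposite quadrants merge into a single component). Controlling $|V\cap\mathbf{Z}(Q)|$ may first require the hypersurface refinement of Zahl \cite{Zahl} or a constant-factor inflation of the partitioning degree to secure the necessary dimensional slack; once this is in place the argument closes as described.
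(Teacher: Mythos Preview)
Your plan for item 1 via Guth--Katz is fine and matches the paper. For item 2, however, you are working much harder than necessary, and the perturbation $Q\mapsto Q+\tau R$ that you propose runs into exactly the topological obstacle you flag: even when each $v\in V\setminus\mathbf{Z}(Q)$ keeps its sign, cells can merge near the singular locus (your $xy\mapsto xy+\tau$ example), so a new cell may swallow several old ones and violate the $(N/E)^n$ bound. You list this as something to be verified but do not indicate how; the dimensional slack you invoke for the interpolation of $R$ does not address it.

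The paper avoids this difficulty altogether by never perturbing $Q$. It invokes the Zahl refinement of the Guth--Katz theorem not to control $\#(V\cap\mathbf{Z}(Q))$, but to guarantee that $\nabla Q$ does not vanish identically on any irreducible component of $\mathbf{Z}(Q)$; equivalently, smooth points are \emph{dense} in $\mathbf{Z}(Q)$. Then for each wall point $v\in V\cap\mathbf{Z}(Q)$ one simply picks a smooth point $w(v)\in\mathbf{Z}(Q)$ with $|v-w(v)|<\delta/2$ and applies the implicit function theorem at $w(v)$ rather than at $v$. This gives $\dist(w(v),\mathbf{Z}(Q+\eps))<\delta/2$ for $|\eps|$ small, uniformly over the finite set $\{w(v):v\in V\cap\mathbf{Z}(Q)\}$, and the triangle inequality finishes. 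The cell structure of $\R^n\setminus\mathbf{Z}(Q)$ is never touched, so item 1 is automatically preserved. The idea you are missing is that $v$ itself need not be a smooth point of the wall; a nearby smooth point suffices.
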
 

\begin{proof} {We use the polynomial partitioning theorem of Guth and Katz, \cite[Theorem 4.1]{GK}, with the improvement of Zahl from \cite[Corollary 2.3]{Zahl2}.  The theorem of Guth and Katz yields the existence of $Q\in \mathbb \R[x_1,\ldots,x_n] $ of degree $\leq \Theta_n E$ such that 
each of the $\Theta_n E^n$ connected components of $\R^n\setminus  \mathbf{Z}(Q)$ contains at most $(N/E)^n$ points. Now the improvement of Zahl implies that we can assume that every irreducible component $\tilde Q$ of $Q$ satisfies $\mathrm{dim}(\tilde Q)=n-1$ and $\nabla {\tilde Q}$ does not vanish identically on $\mathbf{Z}(\tilde Q)$.} In particular the latter condition implies that  $\mathbf{Z}(Q)$ is $(n-1)$-dimensional and smooth points are dense in $\mathbf{Z}(Q)$. Therefore for all $v\in V \cap \mathbf{Z}(Q)$ we may find a smooth point $w(v)\in \mathbf{Z}(Q) $ with $|v-w(v)|<2^{-1}\delta$. By Lemma
\ref{lemma:approx} and a compactness argument there exists $\eps_0>0$ such that for  all $|\eps|<\eps_0$ 
\[
\sup_{v\in V \cap \mathbf{Z}(Q) } \dist (w(v),\mathbf{Z}(Q+\eps)) <\frac{\delta}{2}.
\]
The above estimate together with the construction of $w(v)$ prove the second claim.
\end{proof}

The proof proper of Proposition \ref{prop:polpart} is articulated into several steps.
\vskip2mm  
\paragraph{\textsc{Step 1} (reduction to coordinate patches). } This step is an adaptation of \cite[Subsection 5.3]{Guth2} by Guth. For an  element $g $ of the Grassmanian $\Lambda^m\R^{n}$ define the polynomial
\[
P_g (x)\coloneqq  \nabla P_{1}(x) \wedge \cdots \wedge\nabla P_{n-m}(x) \wedge g
\]
which has degree $\leq \Theta_{n,m} D$.
It is proved in \cite[Lemma 5.6]{Guth2} that one may choose a $\frac{1}{100}$-net $G\subset \Lambda^m\R^{n} $ so that, for every $g\in G$, we have that
\[
W_g \coloneqq \mathbf{Z}(P_1,\ldots, P_{n-m},P_g )
\]
is a transverse complete intersection of dimension $m-1$ and degree $\leq \Theta_{n,m} D$. Before moving further we operate a first excision from $V$, namely we set
\begin{equation}
\label{eq:vg} V_G\coloneqq \left\{v\in V:\,\inf_{g \in G} \dist(v, W_g) <\delta\right\}.
\end{equation}
Clearly, if $v\in V\setminus V_G$, then $v$ belongs to a connected component $O\in\mathcal O$ of the set  $Z\setminus \cup\{Z_g:\, g\in G\} $. Notice that there are at most $ \Theta_{n,m} D^n $ elements in $\mathcal O$. We thus partition 
\begin{equation}
\label{eq:vminusvg} V\setminus V_G= \bigcup_{O \in \mathcal O} V_O, \qquad V_O\coloneqq V\cap O.
\end{equation}

\vskip2mm
\paragraph{\textsc{Step 2} (partition of each coordinate patch).} Fix one such connected component $O \in\mathcal O$.
Proceeding exactly like in \cite[Lemma 5.6]{Guth2} we notice that $O$ is the graph of a Lipschitz map $h:O'\to \R^{n-m}$ with Lipschitz constant $<\frac{1}{10}$, where  $O'$ is an open connected subset of $ \R^m$. By rotational invariance we may identify $\R^m\equiv \mathrm{span}\{e_1,\ldots,e_m\}$ and write
\[
O\coloneqq \big\{(y_1,\ldots,y_m,h(y_1,\ldots,y_m)):\,(y_1,\ldots,y_m)\in O'\big\}.
\]
Let $\Pi:\R^{n}\to \R^m$, $\Pi(x_1,\ldots,x_n)\coloneqq (x_1,\ldots, x_m)$; polynomials of $ \mathbb R [x_1,\ldots,x_m]$ will be identified with a corresponding polynomial in $\R [x_1,\ldots,x_n]$ by precomposing with $\Pi$ without explicit mention.

   Observe that $\Pi $ is injective on $O$ and set $V'_O\coloneqq\Pi V_O \subset O'\subset \R^m$. We may apply Lemma \ref{lemma:approxPP} to the set $V'_O$, with  $m$ in place of $n$ and $\delta/2$ in place of $\delta$, in order to find a polynomial $Q_O\in \mathbb R [x_1,\ldots,x_m]$ with the following properties: $Q_O$ partitions $O'$ into $\leq \Theta_{m} E^m$ cells $\mathsf{C}'$, the connected components of  $O'\setminus \mathbf{Z}(Q_O)$,  with the property that each $\mathsf{C}'$ contains at most $(N/E)^m$ points of $V'_O$; and $V'_O\cap \mathbf{Z}(Q_O)$ is contained in the $(\delta/2)$-neighborhood of $ \mathbf{Z}(Q_O+\eps) $ in $  \mathbb{R}^m$ for all sufficiently small $\varepsilon$. By \cite[Lemma 5.1]{Guth2} we may choose $\varepsilon$ such that the above property holds and \begin{equation}
\label{e:wo}
W_O\coloneqq\mathbf{Z}(P_1,\ldots, P_m, Q_O+\eps) 
\end{equation}
is a transverse complete intersection.
 It follows that the $\leq \Theta_m E^m$ cells
\[
\mathsf{C}\coloneqq \{(y_1,\ldots,y_m,h(y_1,\ldots,y_m)):\,(y_1,\ldots,y_m)\in \mathsf{C}'\}\subset O,
\] 
which we group into $\mathsf{C}\in \vec{\mathsf{C}}_O$, are the connected components of \[O\setminus \mathbf{Z}(P_1,\ldots,P_{n-m},Q_O)\] and the sets\begin{equation}
\label{e:defVc}
V_{\mathsf{C}} \coloneqq V_O \cap \mathsf{C}
\end{equation}
have at most $  (N/E)^m$ elements. It remains to take care of those points $v\in V_O$ such that $\Pi v \in \mathbf{Z}(Q_O)$. We call this set $V_{O,\times}$. By the above, we may pick $w'=(w_1,\ldots, w_m)\in \mathbf{Z}(Q_O+\eps) $ such that $|\Pi v-w|<\delta/2$. Notice that the point $w=(w',h(w'))$ belongs to $W_O $ defined in \eqref{e:wo}  and satisfies  $| v-w|\leq |\Pi v- w'|+|h(\Pi v)-h(w')|<\delta$ using the Lipschitz constant of $h$. We have proved that
\begin{equation}
\label{e:disttci}
\sup_{v\in V_{\times,O}} \dist(v, W_O) <\delta.
\end{equation}

\vskip2mm 
\paragraph{\textsc{Step 3} (definition of $V_\times$).} By the constructions in Steps 1 and 2, the set
\[
V_\times \coloneqq  V_G \cup \bigcup_{O \in \mathcal O}V_{\times,O}
\]
satisfies the required claim, and the collection of transverse complete intersections $W_j$ of the statement is obtained by putting together the $\leq \Theta_{m,n}$ transverse complete intersections $\{W_g:\,g \in G\}$ of Step 1 with the $\leq \Theta_{m,n} D^n$ transverse complete intersections $\{W_O:\,O \in \mathcal O\}$. 

\vskip2mm \noindent \textsc{Step 4} (definition of $V_\circ$ and counting of the hyperplane crossings). 
We define 
\[
\vec{\mathsf{C}} \coloneqq \bigcup\big\{\vec{\mathsf{C}}_O:\,O \in \mathcal O\big\}.
\]
By the construction in Step 2, the sets $V_{\mathsf C}$ defined in \eqref{e:defVc} exhaust $V\setminus V_\times$ and satisfy the cardinality requirement. We are left with counting how many of the intersections $\{\mathsf{C}\cap \{x\in \R^n:\xi\cdot x=a \}:\,\mathsf{C} \in \vec{\mathsf{C}}\} $ are nontrivial. As we have at most $\leq \Theta_m D^n$ elements $O\in \mathcal O$, it suffices to show that for almost every $\xi\in \R^n, a\in \R $ the intersection $\{\mathsf{C}\cap \{x\in \R^n:\xi\cdot x=a \}$ is nontrivial for at most $\leq \Theta_{n,m} D^n E^{m-1}$ connected components $\mathsf{C} \in \vec{\mathsf{C}}_O\}$. 

Fixing  one such $O$, we may go back to the coordinate system where $O$ is a graph over the first $m$ variables. Consider $P_{\xi,a}(x_1,\ldots,x_m)=\Pi \xi \cdot x -a$ as an element of $\R[x_1,\ldots, x_m]$; we  prove the claim for all $\xi \in\R^n $  such that
$
Z_{\xi}\coloneqq \mathbf{Z}(P_1,\ldots, P_{n-m},P_{\xi,a})$ is a transverse complete intersection of dimension $m-1$, as these $\xi $ are a set whose complement is a null set in $\R^n$.    

We first count how many $\mathsf{C} \in \vec{\mathsf{C}}_O$ satisfy the condition  
\[
\mathsf{C}\cap \mathbf{Z}(P_{\xi,a}) \neq \varnothing, \qquad \partial \mathsf{C}\cap \mathbf{Z}(P_{\xi,a}) =\varnothing.
\]
This number is controlled by the number of connected components of the set $\mathbf{Z}(P_{\xi,a})\setminus Z$, which is controlled by $\Theta_n D^{n-1}$: to see this apply \cite[Theorem 4.11]{Sheffer}, which is a reformulation of the main result by Barone and Basu \cite{BarBas}.
Now we count how many $\mathsf{C} \in \vec{\mathsf{C}}_O$ satisfy instead the condition  
\[
\mathsf{C}\cap \mathbf{Z}(P_{\xi,a}) \neq \varnothing, \qquad \partial \mathsf{C}\cap \mathbf{Z}(P_{\xi,a}) \neq \varnothing.
\]
This number is controlled by the number of connected components of the set 
\[
\mathbf{Z}(P_{\xi,a},P_1,\ldots,P_m)\setminus \mathbf{Z}(Q_O)
\] 
which is controlled by $\Theta_{n,m} D^n E^{m-1}$: to see this apply again \cite{BarBas} in the form of  \cite[Theorem 4.11]{Sheffer}, with $U=\mathbf{Z}(P_{\xi,a},P_1,\ldots,P_m)$, which has dimension $m-1$, and $W=\mathbf{Z}(Q_O)$, which has degree $E$. Putting together the two counting arguments above completes the last claim of the second point of Proposition \ref{prop:polpart} and therefore the proof of the proposition.

\section{Proof of Theorem \ref{t:main:manifold}} \label{sec:mainpf}

\subsection{Main line of proof} Both statements in the theorem will be a consequence of the following uniform estimate: for all $1\leq m<n$, $D,J\geq 1$, and $\eta>0$, there exists a constant $\Theta=\Theta_{m,n,D,J,\eta}$ such that
\begin{equation}
\label{e:s51} K_{m,n,D,J}(N)\coloneqq
\sup_{s>0}\, \sup_{Z \in \mathcal Z_{m,n}(D,\,J)}\, \sup_{\substack{V\subset Z \cap \mathcal A_{n}(\frac32) 
\\
 \#V \leq  N^m}} \left\|   \mathrm{A}
_{V,s}\circ S_1  \right\|_{L^2(\R^{n})}^2 \leq \Theta N^{ {m-1}+\eta},
\end{equation}
where $\mathcal Z_{m,n}(D,J)$ is the class of real algebraic varieties introduced in Subsection \ref{ss:alggeom1}. Notice the slightly enlarged annulus, which is for technical reasons. This reduction was made using Proposition \ref{p:CWW}  to insert the annular cutoff.
The proof of \eqref{e:s51} will be inductive, and the constants we will induct  on  are the following; for $R\in \{1,2\},N\geq 1$
\[
\begin{split} &
K^\times _{m,n,D}(N,R) \coloneqq \sup_{s>0}\sup_{Z \in \mathcal Z^\times_{m,n}(D)} \sup_{\substack{V\subset Z \cap \mathcal A_{n}(R) \\ \#V \leq  N^m}} \left\|   \mathrm{A}
_{V,s} \circ S_1   \right\|_{L^2(\R^{n})}^2 ,
\\
 & K^\times _{m,n,D}(N)\coloneqq {K^\times_{m,n,D}(N,1).}
\end{split}
\]
The approximation  Proposition \ref{lemma:smoothcomp}  allows us to relate  $ K_{m,n,D,J}(N)$ to {$K^\times_{\mu,n,\Theta}(N)$ for $1\leq \mu \leq m$ and some suitable constant $\Theta=\Theta_{m,n,J,D}$} and ultimately induct on the latter quantity only.

 \begin{lemma} \label{lemma:reductSmooth} There exists a constant $\Theta= \Theta_{m,n,J,D}$ such that
\[
K_{m,n,D,J}(N) \leq   \Theta \log N  \sup_{1\leq \mu\leq m} K^\times_{\mu,n,\Theta}\left(N^{\frac{m}{\mu}}\right) .
\]
\end{lemma}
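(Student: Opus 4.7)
\emph{Proof plan.} The guiding idea is to use the approximation Proposition \ref{p:smoothapprox} to replace the arbitrary $m$-dimensional variety $Z$ by a controlled collection of transverse complete intersections of dimensions $1 \leq \mu \leq m$, chosen at an approximation scale commensurate with the averaging scale $s$, so that the passage is absorbed cleanly by Lemma \ref{lem:closeness}. Fix $Z \in \mathcal{Z}_{m,n}(D,J)$ and $V \subset Z \cap \mathcal{A}_n(\tfrac{3}{2})$ with $\#V \leq N^m$; by Corollary \ref{cor:largescale} it suffices to treat $0 < s < 2^{-5}$. Applying Proposition \ref{p:smoothapprox} with $R = 2$ and $\eps = 2^{-10} s$ produces a finite exceptional set $Z_0$ of cardinality $\leq \Theta_{m,n,D,J}$ together with collections $\{Z_{\mu,\vartheta}\}_{\vartheta=1}^{\Theta} \subset \mathcal{Z}^\times_{\mu,n}(\Theta)$ for each $1 \leq \mu \leq m$, whose $\eps$-neighborhood covers $Z \cap \mathcal{A}_n(2)$.

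A greedy nearest-neighbor assignment partitions $V = V_0 \cup \bigcup_{\mu,\vartheta} V_{\mu,\vartheta}$, where each $v \in V_0$ has an $\eps$-close representative $z_0(v) \in Z_0$ and each $v \in V_{\mu,\vartheta}$ an $\eps$-close $w_{\mu,\vartheta}(v) \in Z_{\mu,\vartheta}$. Setting $W_{\mu,\vartheta} \coloneqq \{w_{\mu,\vartheta}(v) : v \in V_{\mu,\vartheta}\} \subset Z_{\mu,\vartheta} \cap \mathcal{A}_n(2)$, we have $\#W_{\mu,\vartheta} \leq N^m = (N^{m/\mu})^\mu$. From the pointwise bound $\mathrm{A}_{V,s}^2 \leq \mathrm{A}_{V_0,s}^2 + \sum_{\mu,\vartheta} \mathrm{A}_{V_{\mu,\vartheta},s}^2$ the $V_0$ contribution is dominated via Lemma \ref{lem:closeness} by $\|\mathrm{A}_{Z_0}\|_{L^2}^2 \lesssim \Theta$, a bounded sum of $L^2$-bounded Fourier multipliers.
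For each $(\mu,\vartheta)$, Lemma \ref{lem:closeness} applied to $U = V_{\mu,\vartheta}$ and $V = W_{\mu,\vartheta}$ gives
\[
\|\mathrm{A}_{V_{\mu,\vartheta},s}\circ S_1\|_{L^2(\R^n)} \lesssim \|\mathrm{A}_{W_{\mu,\vartheta}}\|_{L^2(\R^n)}.
\]
The right-hand side is a \emph{multi-scale} maximal operator; converting back to single scale via Proposition \ref{p:CWW} introduces a factor of $\sqrt{\log \#W_{\mu,\vartheta}} \lesssim \sqrt{\log N}$, after which the result is controlled by $K^\times_{\mu,n,\Theta}(N^{m/\mu}, 2)$. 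A dyadic decomposition of $\mathcal{A}_n(2)$ into $O(1)$ unit-type annuli, combined with the scaling identity \eqref{e:scale}, gives comparability with $K^\times_{\mu,n,\Theta}(N^{m/\mu})$. Squaring and summing the $O(1)$ many pieces yields the claimed bound, with the $\sqrt{\log N}$-factor squaring to the stated $\log N$.

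The main technical obstacle is the calibration of the approximation parameter $\eps$. Lemma \ref{lem:closeness} demands $\eps$ proportional to $s$, whereas the implicit constant $\Theta$ supplied by Proposition \ref{p:smoothapprox} could in principle depend on $\eps$ and hence degrade with $s$. The structural property to exploit is that only the \emph{existence} of the approximating collection depends on $\eps$, whereas the combinatorial data---count, dimension, and degree of the TCIs---depend only on $m, n, D, J$; thus the final constant is $s$-independent as required. The $\log N$ factor in the conclusion is the unavoidable Chang-Wilson-Wolff cost of returning from the multi-scale operator $\mathrm{A}_{W_{\mu,\vartheta}}$ produced by Lemma \ref{lem:closeness} to the single-scale quantity defining $K^\times$.
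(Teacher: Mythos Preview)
Your argument is correct and follows essentially the same route as the paper's proof: reduce to small $s$ via Corollary~\ref{cor:largescale}, apply Proposition~\ref{p:smoothapprox} at scale $\eps\sim s$ to approximate $Z$ by boundedly many TCIs of bounded degree, pass to the approximating directions via Lemma~\ref{lem:closeness}, and then convert the resulting multi-scale operator back to single scale via Proposition~\ref{p:CWW}, picking up the $\log N$; finally reduce $K^\times(\cdot,2)$ to $K^\times(\cdot)$ by rescaling. Your explicit treatment of the exceptional set $Z_0$ and your remark that the combinatorial data from Proposition~\ref{p:smoothapprox} are $\eps$-independent are exactly the points the paper leaves implicit.
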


\begin{proof}  Throughout the constant $\Theta=\Theta_{m,n,D,J} $ is understood to depend only on the indicated parameters and may vary from line to line. {For every positive integer $M$ the inequality 
\begin{equation}
\label{e:KmnDN2}
 K^\times _{m,n,D}(M,2) \lesssim    K^\times _{m,n,D}(M) 
\end{equation}
}
is obvious by rescaling. So it suffices to prove that 
{
\[
K _{m,n,D,J}(N ) \leq \Theta  \log N \sup_{1\leq \mu\leq m} K^\times _{m,n,\Theta}(N^{\frac{m}{\mu}},2) .
\]
}
 To do so, by virtue of Corollary \ref{cor:largescale}, it is enough to estimate operator norms on the left hand sides of \eqref{e:s51} for $0<s<2^{-5}$. Fix such an $s$ and $Z\in  \mathcal Z_{m,n}(D,J)  $. Notice that by a straightforward application of Proposition \ref{p:smoothapprox},
we may find that there exist {$\Theta=\Theta_{m,n,D,J}$ and a set $Z_0\subset Z$} with $\#Z_0\leq\Theta$ and collections of transverse complete intersections $\mathcal Z_\mu(Z)\coloneqq\{Z_{\mu,\vartheta}:\,1\leq \vartheta \leq \Theta\}\subset \mathcal Z_{\mu,n} ^\times (\Theta) $ such that 
\[
\dist\left(Z \cap \mathcal A_{n}(3/2), Z_0 \cup \bigcup_{\mu=1}^m \bigcup_{\vartheta=1}^\Theta Z_{\mu,\vartheta} \right) <2^{-10}s.
\]
Applying Lemma  \ref{lem:closeness}   we have
\[
\begin{split}
\sup_{\substack{V\subset Z \cap \mathcal A_{n}(3/2) \\ \#V \leq  N^m}} \left\|   \mathrm{A}
_{V,s}\circ S_1   \right\|_{L^2(\R^{n})}^2   & \leq  \Theta  \sup_{1\leq \mu\leq m}\, \sup_{1\leq \vartheta\leq \Theta }\,  \sup_{\substack{V\subset Z_{\mu,\vartheta}  \cap \mathcal A_{n}(2) \\ \#V \leq  N^m}} \left\|   \mathrm{A}
_{V }   \right\|_{L^2(\R^{n})}^2 
\\
&\leq \Theta  \log N \left( \sup_{1\leq \mu\leq m} K^\times_{\mu,n,\Theta}\left(N^{\frac{m}{\mu}},2\right) \right) 
\end{split}
\]
where in the last step we applied Proposition~\ref{p:CWW}. The claim follows by taking supremum over $s$ and $Z\in  \mathcal Z_{m,n}(D,J)  $.
\end{proof} 
We now show how Lemma  \ref{lemma:reductSmooth} reduces \eqref{e:s51} to the following statement; here $1\leq m<n$.
\begin{mnest}  For all $  D\geq 1$ and for all $\eta>0$ there exists a constant $\Upsilon_{m,n,D,\eta}$ such that
\begin{equation} \tag*{(TCI)$_{(m,n)}$}   
	K^\times _{m,n,D}(N) \leq \Upsilon_{m,n,D,\eta} N^{m-1+\eta} .
\end{equation}
\end{mnest}
\begin{proof}[Proof that the \emph{(TCI)}$_{(\mu,n)}$ estimate for all $1\leq \mu\leq m$ implies \eqref{e:s51}] 
Fix $D,J\geq 1$. 
Applying  Lemma \ref{lemma:reductSmooth}, we may find a constant $\Theta=\Theta_{m,n,D,J}$ such that
\[
K_{m,n,D,J}(N) \leq   \Theta  \log N \left( \sup_{1\leq \mu\leq m} K^\times _{\mu,n,\Theta}\left(N^{\frac{m}{\mu}}\right)\right)\leq \frac{\Theta N^{\frac\eta 2} }{\eta}\left( \sup_{1\leq \mu\leq m} K^\times _{\mu,n,\Theta}\left(N^{\frac{m}{\mu}}\right)\right).
\]
Using the   estimate (TCI)$_{(m,n)}$ for $\frac \eta 2>0$, we have that
\[
K^\times  _{m,n,\Theta}\left(N \right)\leq  \Upsilon_{m,n,\Theta,\frac{\eta}2} N^{m-1+\frac{\eta}{2}}
\]
while using the  {(TCI)}$_{(\mu,n)}$ estimate   with $(m-\mu)/m$ in place of $\eta$ when $\mu<m$, we have that
\[
K^\times _{\mu,n,\Theta}\left(N^{\frac{m}{\mu}} \right)\leq \Upsilon_{\mu,n,\Theta,(m-\mu)/m} N^{m-1}.
\]
Putting together the last three displays completes the proof of the implication.
\end{proof}

 

Summarizing, we have reduced Theorem \ref{t:main:manifold} to proving (TCI)$_{(m,n)}$ for all pairs $(m,n)$ with $1\leq m<n$. This will be done by  the induction scheme 
\[
(m-1,n) \wedge \bigwedge_{\mu=1}^m (\mu,n-1) \implies   (m,n),\quad 1<m<n-1,
\] 
while using the two base cases $(1,n)$ and $(n-1,n)$ as seeds of our induction. The main inductive step is summarized in the following   estimate which is a consequence of polynomial partitioning.
\begin{mnpest} 
Let  $1< m<n-1$ and $ D\geq 1$.
For all integers   $E$ with $E^m\geq D^n$, there holds  \begin{equation}
 \tag*{(PART)$_{(m,n)}$}\begin{split}
 K^\times _{m,n,D}(N) & \leq 
\Theta_{m,n} D^{2n}\bigg[E^{m-1} K^\times _{m,n,D}\left({\textstyle\frac{N}{E}}\right) + (\log N)K^\times _{m-1,n,\Theta_{m,n}E}\left( N^\frac{m}{m-1}\right) 
\\[1.5ex]
&\quad  + E(\log N)^2\sup_{1\leq \mu \leq m } K^\times _{\mu,n-1,\Theta_{m,n,D}}\left(N^{\frac{m}{\mu}}\right)  \bigg].
\end{split}
\end{equation}
\end{mnpest}
We detail the two base cases in Subsection \ref{ss5:1n} and \ref{ss5:n1n} and prove the  partitioning estimate (PART)$_{(m,n)}$ in the final Subsection  \ref{ss5:pe}. The main line of proof of Theorem \ref{t:main:manifold} will be complete once we establish the implication
\begin{center}
(PART)$_{(m,n)}$ $\wedge $  (TCI)$_{(m-1,n)}$ $\displaystyle\wedge \bigwedge_{\mu=1}^m$ (TCI)$_{(\mu,n-1)}$  $\quad\implies\quad$
(TCI)$_{(m,n)}$ 
\end{center}
for all $1<m<n-1$.

\begin{proof}[Proof of the implication]  Fix $D\geq 1,\eta>0$. The goal is to find a uniform in $N$ bound for
\[
\tau(N)\coloneqq \frac{K^\times _{m,n, D}(N) }{N^{m-1+\eta}}.
\]
For each   $E\geq 1$ we may use the induction assumptions  (TCI)$_{(m-1,n)}$, (TCI)$_{(\mu,n-1)}$ for $1\leq \mu \leq m$ to get the estimates
\[
\begin{split}
&
\frac{K^\times _{m-1,n,\Theta_{m,n}E}\left( N^\frac{m}{m-1}\right)}{N^{m-1}} \leq \Upsilon_{m-1,n,\Theta_{m,n}E,\frac1m},  
\\
& \frac{K^\times _{\mu,n-1, \Theta_{m,n,D}}\left(N^{\frac{m}{\mu}}\right)}{N^{m-1}} \leq   \Upsilon_{\mu,n-1,\Theta_{m,n,D},\frac{m-\mu}{m}}  , \qquad 1\leq \mu \leq m-1,
\\ 
& \frac{K^\times _{m,n-1, \Theta_{m,n,D}}\left(N\right)}{N^{m-1}} \leq   \Upsilon_{m,n-1,\Theta_{m,n,D},\frac{\eta}{2}}   N^{\frac{\eta}{2}}.
\end{split}
\]
By virtue of these estimates and of the elementary bound $\log N\leq \frac4\eta N^{\frac{\eta}{4}}$, dividing (PART)$_{(m,n)}$ by $N^{m-1+\eta}$ yields that for all $E^m>D^n$
\begin{equation}
\label{e:main:finalp1}
\begin{split}
 \tau(N)& \leq  \frac{\Theta_{m,n} D^{2n}}{E^\eta}\tau \left({\textstyle\frac{N}{E}}\right) \\ & \quad + 
\frac{\Theta_{m,n}D^{2n}}{\eta^2} \Bigg[  \Upsilon_{m-1,n,\Theta_{m,n}E,\frac1m}  \\ & \qquad \qquad \qquad\;+  \Upsilon_{m,n-1,\Theta_{m,n,D},\frac{\eta}{2}}  + \sup_{1\leq \mu\leq m-1} \Upsilon_{\mu,n-1,\Theta_{m,n,D},\frac{m-\mu}{m}}  \Bigg].
\end{split}
\end{equation}
Now choosing  \[
\begin{split} &
E=E(m,n,D,\eta)\coloneqq\left\lceil (2\Theta_{m,n} D^{2n})^{\frac1\eta} \right\rceil,
\\[1.5ex]
&\Upsilon_{m,n,D,\eta}\coloneqq  \frac{2\Theta_{m,n}D^{2n}}{\eta^2} \Bigg[  \Upsilon_{m-1,n,\Theta_{m,n}E,\frac1m}  + \Upsilon_{m,n-1,\Theta_{m,n,D},\frac{\eta}{2}} \\ &  \qquad \qquad \qquad \qquad \qquad +\sup_{1\leq \mu\leq m-1} \Upsilon_{\mu,n-1,\Theta_{m,n,D},\frac{m-\mu}{m}}  \Bigg],
\end{split}
\]
estimate \eqref{e:main:finalp1} becomes 
\begin{equation}
\label{e:main:finalp2}
\begin{split}
\tau(N) -\frac{1}{2}\tau \left({ \frac{N}{E }}\right)   \leq \frac{ \Upsilon_{m,n,D,\eta}}{2}, 
\end{split}
\end{equation}
and induction on $N$ of \eqref{e:main:finalp2} yields the claimed (TCI)$_{(m,n)}$ for this choice of $D,\eta$.
\end{proof}

\subsection{The (TCI)$_{(1,n)}$ estimate} \label{ss5:1n}
To prove the (TCI)$_{(1,n)}$ estimate,  we fix a transverse complete intersection $Z=\mathbf{Z}(P_1,\ldots, P_{n-1})\in \mathcal Z_{1,n}^\times(D)$. Note that in particular $Z$ is the union of at most $D^n$ smooth connected curves $\mathcal{V}_j=\{\gamma_j(t):\,t \in I_j\}\subset \R^n$ where $I_j\subset \R$ are  parameter intervals. Now for each $\xi\in \R^{n}, a\in \R $ the function  $t\mapsto \xi \cdot \gamma_j(t)-a$ may change sign at most {$O(D_j)$} times {with $\sum_j D_j \leq D$}. The latter property allows us to  appeal to an estimate of of C\'ordoba from \cite{Cor1982} which can be summarized in the following calculation.  If $\mathcal V_j\cap V= \{v_{j,1},\ldots,v_{j,n_j}\}$ where for each $j$ the directions are in consecutive order then
\[
\begin{split}
& \|A_{Z\cap V,s}f\|_{L^2(\R^n)}\leq \|A_{Z\cap \tilde V,s}f\|_{L^2(\R^n)}+ \Bigg(\sum_{j=1} ^{D^n} \sum_{\ell=1} ^{n_j-1} \|(A_{v_{j,\ell+1},s} - A_{v_{j,\ell},s})f \|_{L^2(\R^n)} ^2\Bigg)^{\frac12}
\\
&\qquad+\Bigg(\sum_{j=1} ^{D^n}  \|(A_{v_{j+1,1},s} - A_{v_{j,n_j},s})f \|_{L^2(\R^n)} ^2\Bigg)^{\frac12}
\end{split}
\]
where $\# \tilde V\leq \# V/2$. The first square function is estimated as in the proof of the good part in Theorem~\ref{thm:curve} by a constant multiple of $\|\psi\|_{\mathrm{BV}}D^{\frac 1 2}\|f\|_{L^2(\R^n)}$, using the $D_j$ crossing property of each $\mathcal V_j$; here we remember that $\psi$ is the smooth function used to define $A_{v,s}$. The square function of the second summand is estimated just by triangle inequality and is bounded by a constant multiple of $D^{\frac n 2}\|f\|_{L^2(\R^n)}$. Thus induction on $\#V$ yields the bound $\|A_{Z\cap V,s}\|_{L^2(\R^n)}\lesssim_{n}  D^{\frac n2}\log (\#V\cap Z) $. Note that the argument above is essentially identical to the one used for the good part in the proof of Theorem~\ref{thm:curve}; here however we can afford implicit losses depending on $D$. The estimate described above is summarized in the following proposition.

\begin{proposition} Let $\mathcal{V}=\{\gamma(t):\,t \in I\}\subset \R^n$ be the image of a smooth curve $
\gamma$ with the property that for each $\xi\in \R^{n}, a\in \R $, the function $t\mapsto \xi \cdot \gamma(t)-a$ changes sign at most $D$ times. Then
\[
\sup_{\substack{V\subset \mathcal V \cap\mathcal A_n(1) \\ \#V \leq N}} \left\|\mathrm{A}_{V} \right\|_{L^2(\R^{n}) } \leq \Theta_n D^{\frac{n}{2}} \log N   .
\]
\end{proposition}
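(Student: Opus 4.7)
The plan is to adapt C\'ordoba's argument from \cite{Cor1982} along the lines of the good-part analysis in the proof of Theorem~\ref{thm:curve}, much simplified by the fact that $\mathcal V$ is a single connected smooth curve, so that no bad-cluster reduction is required.

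A BV-averaging in the parameter $a$ (using the decomposition of the Schwartz profile $\psi$ as a superposition of indicators of symmetric intervals) reduces matters to showing
\[
\sup_{a>0} \|\mathrm{B}_{V,[-a,a]}\|_{L^2(\R^n)} \lesssim_n D^{n/2} \log N,
\]
with $\mathrm{B}_{V,[-a,a]}$ defined as in the proof of Theorem~\ref{thm:curve}. Enumerate $V=\{v_1,\ldots,v_N\}$ according to the ordering induced on $\mathcal V$ by the curve parameter $t \in I$. A bisection telescoping along this ordering yields the recursive inequality
\[
\|\mathrm{B}_{V,[-a,a]}\|_{L^2(\R^n)} \leq \Theta_n D^{n/2} + \|\mathrm{B}_{\widetilde V,[-a,a]}\|_{L^2(\R^n)}, \qquad \#\widetilde V\leq \lfloor N/2\rfloor,
\]
whose first term is controlled by the $\ell^\infty$-in-$\beta$ norm of the square function
\[
\bigg(\sum_{\ell=1}^{N-1} \big|\ind_{[-a,a]}(\beta\cdot v_{\ell+1})-\ind_{[-a,a]}(\beta\cdot v_\ell)\big|^2\bigg)^{1/2}.
\]
Since $\gamma$ has a single smooth connected component, each nonzero summand forces the arc of $\gamma$ between $v_\ell$ and $v_{\ell+1}$ to cross one of the hyperplanes $\{v:\beta\cdot v=\pm a\}$; the hypothesis then caps the full sum by $2D$, giving the pointwise bound $\lesssim D^{1/2} \leq D^{n/2}$.

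Iterating the recursion $\Theta(\log N)$ times delivers the claimed bound. The main technical point is the bisection identity underlying the square function estimate, which parallels the analogous step in Theorem~\ref{thm:curve} but is substantially cleaner here: the presence of a single connected component eliminates the inter-component boundary terms and removes the need to invoke Milnor--Thom-type crossing bounds beyond the hypothesis itself.
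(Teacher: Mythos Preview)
Your proposal is correct and follows precisely the route the paper indicates: the paper does not give a self-contained proof of this proposition but defers to C\'ordoba \cite{Cor1982}, noting that the argument parallels the good-part analysis in the proof of Theorem~\ref{thm:curve}, which is exactly what you have carried out. Your bisection--square-function argument in fact yields the sharper constant $D^{1/2}$ in place of $D^{n/2}$, which is consistent with (and stronger than) the stated bound.
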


In consequence of this proposition we have
\[
K^\times _{1,n,D}(N) \leq \Theta_n D^{2n} (\log N)^2 \leq \frac{\Theta_n D^{2n}}{\eta^2} N^\eta
\]
for all $\eta>0$, which complies with the claimed (TCI)$_{(1,n)}$ estimate.

\subsection{The (TCI)$_{(n-1,n)}$ estimate}  \label{ss5:n1n} For the codimension $1$ case, we first observe that whenever $V\subset \mathcal A_{n}(1)$, denoting $V'=\{v':v\in V\}\subset \mathbb S^{n-1}$ we have
\[
\|M_{V }\|_{L^2(\R^n)} \leq 2  \|M_{V' }\|_{L^2(\R^n)}.
\]  
By virtue of the above display and of Lemma \ref{l:roughsmooth}, we obtain that
\begin{equation}
\label{e:5sphere}
K^\times _{n-1,n,D}(N) \leq \Theta \sup_{0<s<2^{-5}} \sup_{\substack{V\subset \SS^{n-1}\\ \#V\leq N^{n-1}}}   \left\| \mathrm{A}_{V,s} \circ S_1 \right\|_{L^2(\R^n)}^2.
\end{equation} 
We know, respectively from   \cite{KB} and from Theorem \ref{t:main:card} with $k=1$ (say),  that the supremum in the right hand side of \eqref{e:5sphere} is bounded  by $\Theta \log N\leq \frac{\Theta}{\eta}  N^{\eta} $ when $n=2$ and by $\Theta N \log N \leq \frac{\Theta}{\eta}\Theta N^{1+\eta}  $ when $n=3$. These observations deal with the cases $n=2,3$ of (TCI)$_{(n-1,n)}$. 

We now  estimate (likely not optimally) that the right hand side of \eqref{e:5sphere}  is controlled by $\Theta_n N^{{n-2}} (\log N)^{n-2}$ when $n\geq 4$, completing the proof of the (TCI)$_{(n-1,n)}$ estimate in all cases.
 Fix $0<s<2^{-5}$ and take a $2^{-10}s$-net in $\Omega$ and for each $\xi \in \Omega$ we define the cluster $V_\xi= V\cap  R_{\xi,3s}$. We say that a cluster is bad if $\#V_\xi\geq N^{n-2}$.  By a greedy selection algorithm we can identify a set  $\Omega_{\mathsf{bad}}\subset \Omega$  having at most $N$ elements, and disjoint sets $W_\xi\subset V_\xi$ with the property that, defining,
\[
 V_{\mathsf{bad}}\coloneqq
\bigcup_{\xi \in \Omega_{\mathsf{bad}}} V_{\xi} = \bigcup_{\xi \in \Omega_{\mathsf{bad}}} W_{\xi}, \qquad
 V_{\mathsf{good}}\coloneqq V\setminus \bigcup_{\xi \in \Omega_{\mathsf{bad}}} V_{\xi} ,
\]
there holds
\[
\sup_{\xi\in \Omega}\#( V_{\mathsf{good}}) \cap R_{\xi, s} \leq N^{n-2}.
\]
An easy overlap estimate on the Fourier support of the multipliers $\{A_{v,s}:v\in  V_{\mathsf{good}}\}$ then leads to the bound
\[
\left\| \mathrm{A}_{ V_{\mathsf{good}},s} \circ S_1 f \right\|_{L^2(\R^n)}^2 \leq \Theta_n  N^{n-2}\left\|f\right\|_2^2.
\]
For the directions in $V\setminus  V_{\mathsf{good}}$ we use  an  $n-2$ dimensional estimate for each $W_\xi$, as $W_\xi$ is $3s$-close to a set   $U_\xi\subset \SS^{n-1}\cap \xi^\perp\equiv {\SS^{n-2}}$ with $\#W_\xi =\# U_\xi$. Arguing inductively with the help of Proposition \ref{p:CWW}, we know \[
 \left\| \mathrm{A}_{ U_\xi }  \right\|_{L^2(\R^{n-1})}^2\leq\Theta_n (\# U_\xi)^{\frac{n-3}{n-2}} (\log N)^{n-2},\] one more log factor than the inductive estimate we assumed for $\mathrm{A}_{ U_\xi,s}\circ S_1$.   In fact, applying Corollary \ref{cor:descent},
\begin{equation}
\label{e:5sphere1}
\begin{split} & \quad
\left\| \mathrm{A}_{  V_{\mathsf{bad}},s} \circ S_1 f\right\|_{L^2(\R^n)}^2 \leq \Theta_n  \sum_{\xi \in \Omega_{\mathsf{bad}}} \left\| \mathrm{A}_{ W_\xi,s} \circ S_1 f\right\|_{L^2(\R^n)}^2 \\ & \leq \Theta_n   \left(\sum_{\xi \in \Omega_{\mathsf{bad}}}  \left\| \mathrm{A}_{ U_\xi }  \right\|_{L^2(\R^{n-1})}^2 \right) \left\|S_1f\right\|_2^2  \leq \Theta_n (\log N)^{n-2}\left( \sum_{\xi \in \Omega_{\mathsf{bad}}} (\# U_\xi)^{\frac{n-3}{n-2}} \right) \left\|S_1f\right\|_2^2
\\ & \leq \Theta_n N^{n-3}(\log N)^{n-2} \left( \sum_{k=1}^{\log N}2^{k\frac{n-3}{n-2}} \#\{\xi \in \Omega_{\mathsf{bad}}:\, \# U_\xi\sim 2^{k} N^{n-2} \} \right) \left\|S_1f\right\|_2^2\\ &\leq \Theta_n ( N\log N)^{n-2}  \left\|S_1f\right\|_2^2
\end{split}
\end{equation}
as $\#\{\xi \in \Omega_{\mathsf{bad}}:\,2^{k-1} N^{n-2}< \# U_\xi\leq 2^{k} N^{n-2} \} \leq 2^{-k}N$.

Note that if we apply the approach above for directions on $\SS^2$ we get the bound $N^\frac12 (\log N)^\frac12$ for the single annulus estimate,  which has been proved in \cite{Dem12} with similar ideas. This  is worse than the bound proved in Proposition~\ref{p:single:card} where  the factor of $\sqrt{\log N}$ is replaced by an arbitrary iterated logarithm.

\subsection{Proof of the  (PART)$_{(m,n)}$ estimate}  \label{ss5:pe}
Let $D\geq 1$, $E^m\geq D^n$ and \[Z=\mathbf{Z}(P_1,\ldots, P_{n-m})\in \mathcal Z^\times_{m,n}(D)\] be a TCI. 
By Corollary \ref{cor:largescale} it is enough to argue for  $0<s<2^{-5}$. Fix  a subset $V\subset Z \cap \mathcal A_{n}(1)$ with $\# V\leq N^m$. To estimate the maximal operator $A_{V,s}\circ S_1$, we will partition $V$ using Proposition \ref{prop:polpart}, choosing $\delta=2^{-10}s$. We obtain the decomposition
\begin{equation}
\label{e:pf560}
V = V_{\circ} \cup V_{\times}
\end{equation}
with the following properties. To begin with, we may find $\leq \Theta_{m,n} D^n $ transverse complete intersections $W_j=\mathbf{Z}(P_{1},\ldots, P_{n-m},Q_j)$ of dimension $m-1$ and degree $\leq \Theta_{m,n}  E$ such that
\begin{equation}
\label{e:pf561}
\sup_{v \in V_{\times}} \inf_{j} \dist(v,W_j) <2^{-10}s.
\end{equation}
Moreover, there exist $\leq \Theta_{m,n} D^n  E^m$ disjoint connected subsets of  $Z$, denoted by $\mathsf{C} \in \vec {\mathsf{C}}  $,   with the property that
\begin{equation}
\label{e:pf562}
 V_\circ = \bigcup_{\mathsf{C} \in \vec {\mathsf{C}}} V_{\mathsf{C}}, \qquad V_{\mathsf{C}}\coloneqq V_{\circ}\cap \mathsf{C}, \qquad \#V_{\mathsf{C} }\leq \left(\frac N E\right)^m,
 \end{equation}
and such that for almost every $\xi \in \R^n$ and every  $a\in \R$, there holds
\begin{equation}
\label{e:5bcp}
\# \big\{\mathsf{C} \in \vec {\mathsf{C}}:\,\mathsf{C} \cap \{x\in \R^n:\xi\cdot x=a \} \neq \varnothing\big\}\leq \Theta_{m,n} D^{2n}E^{m-1}.
\end{equation}

\subsubsection*{Controlling $V_\times$} In a similar way to what was done in the proof of Theorem \ref{t:main:card}, we approximate directions from $V_\times$ with directions from the $\leq \Theta_{m,n} D^n$ transverse complete intersections $   W_j$ which are lower-dimensional.
To wit, we use \eqref{e:pf561} to find subsets $V_j\subset W_j$ with $\#V_j \leq N^m$ so that for each  $v\in V_\times$ there exists $j$ and $w(v)\in V_j$ with $|v-w(v)|<2^{-10}s$. In particular $V_j \subset \mathcal{A}_n(2)$. Applying the approximation of Lemma~\ref{lem:closeness}, followed by Proposition \ref{p:CWW}, we deduce that
\begin{equation}\label{e:5wall}
\begin{split}
\big\|  \mathrm{A}_{V_\times,s} \circ {S_1} f  \big\|_2^2 &  \leq  \Theta_{m,n} D^n \left( \sup_{j} \big\|  \mathrm{A}_{V_j} \big\|_{L^2(\R^n)}^2 \right) \left\|{S_1} f  \right\|_2^2 
\\ 
& \leq    \Theta_{m,n} D^n K^\times _{m-1,n,\Theta_{m,n}E}(N^{\frac {m}{m-1}},2) \log N  \left\|{S_1}f\right\|_2^2
\\ 
& \leq    \Theta_{m,n} D^n K^\times _{m-1,n,\Theta_{m,n}E}(N^{\frac {m}{m-1}}) \log N  \left\|{S_1}f\right\|_2^2 \end{split}
\end{equation}
as  each $W_j\in\mathcal Z_{m-1,n}^\times(\Theta_{m,n} E)$, and using  inequality \eqref{e:KmnDN2} in the last step. This completes the control of the  $V_\times$ component of $V$.

\subsubsection*{Splitting $V_\circ$ into good and bad clusters}
 Using that  $\vec{\mathsf{C}}$ is a finite set,  we may choose a $2^{-10}s$-net $\Omega$ on $\mathbb{S}^{n-1}$such that the bounded crossing property \eqref{e:5bcp} holds with $a=\pm 3s$ for \emph{all} $\xi \in \Omega$. For $\xi\in \Omega$  we define $\mathsf{k}_\xi\subset \vec{\mathsf{C}}$ to be a cluster of cells with top $\xi \in \Omega$ if
\[
\mathsf{C} \subset R_{\xi,3s} \qquad \forall \mathsf{C}\in {\mathsf{k}}_\xi.
\]
We say that a cluster ${\mathsf{k}}_\xi$ is bad if it contains more than $E^{m-1}$ cells. As $\# \vec{\mathsf{C}}\leq \Theta_{m,n}D^{n} E^m$, by  an iterative selection algorithm, we may construct $\Omega_{\mathsf{bad}}\subset \Omega$ with   $\#\Omega_{\mathsf{bad}}\leq \Theta_{m,n} D^nE$ such that setting
\[
\vec{\mathsf{C}}_{\mathsf{good}} \coloneqq \vec{\mathsf{C}} \setminus \bigcup_{\xi \in\Omega_{\mathsf{bad}} }\mathsf{k}_\xi,
\]
there holds
\begin{equation}
\label{e:5nottoomuchcont}
\#\left\{\mathsf{C}\in \vec{\mathsf{C}}_{\mathsf{good}}:\,\mathsf{C} \subset R_{\xi,3s} \right\} \leq  E^{m-1} \qquad 
\forall \xi \in \Omega.
\end{equation}
We split accordingly
\[
\begin{split} &
V_\xi  \coloneqq \bigcup_{\mathsf{C}\in \mathsf{k}_\xi} V_{\mathsf{C}}
,\qquad 
V_{\circ, \mathsf{bad}}  \coloneqq \bigcup_{\xi \in\Omega_{\mathsf{bad}} }V_{\xi},
\qquad
V_{\circ, \mathsf{good}}   \coloneqq\bigcup_{\mathsf{C}\in \vec{\mathsf{C}}_{\mathsf{good}}} V_{\mathsf{C}}. 
\end{split}
\]

\subsubsection*{Controlling $ V_{\circ, \mathsf{bad}}$} We still aim to control the contribution of the $\leq\Theta_{m,n} D^n E$ bad clusters with a  lower dimensional estimate, replacing the $\leq N^m$ vectors $V_{\xi}$ with a set $U_\xi$ consisting of $\leq N^m$ vectors   from the $(n-1)$-dimensional hyperplane $\xi^\perp$. This step is more difficult than the analogous one from the proof of Theorem \ref{t:main:card} because we need to make sure that the obtained $U_\xi$ is contained in an $m$-dimensional algebraic variety  on $\xi^\perp$ of controlled degree and count. This is not in general possible by simply taking projections and using Lemma \ref{cor:descent} as the projection of an algebraic variety is in general a semi-algebraic set \cite{BCR}.

We tackle this difficulty  by applying the approximate projection Lemma \ref{lemma:algproj}, after a rotation taking $\xi$ to $e_n$,  with choice of $U$ given by
$
 U \coloneqq Z\cap A_{n}(1) \cap R_{\xi,3s}
$ 
Notice that the latter set contains $V_\xi$. Then Lemma~\ref{lemma:algproj} yields the existence of an algebraic variety $W_\xi\in \mathcal Z_{m,n-1}(\Theta_{m,n,D}, \Theta_{m,n,D}) $ contained in $\xi^\perp$ such that for every  $v\in V_\xi$ we may  find $u(v) \in W_\xi$ with $|v-u(v)|\leq 6s$. Define $U_\xi= \{u(v):v\in V_\xi\}\subset W_\xi\cap \mathcal A_{n-1}(3/2)$, and observe that $\#U_\xi\leq N^m$. Using  that there are at most  $\Theta_{m,n}D^n  E $ bad clusters together with the approximation Lemma~\ref{lem:closeness}  in the first step, Fubini's theorem in the second, Proposition \ref{p:CWW} in the third,   we obtain the chain of inequalities
\begin{equation}\label{e:5badbound}
\begin{split}
\big\|  A_{V_{\circ, \mathsf{bad}},s} \circ {S_1} f  \big\|_2^2 & \leq  \Theta_{m,n}D^n  E\left( \sup_{ \xi \in\Omega_{\mathsf{bad}
}} \big\|  A_{U_\xi}  \big\|_{L^2(\R^n)}^2 \right) \left\|{S_1} f\right\|_2^2
\\ 
& \leq   \Theta_{m,n}D^n E \left(\sup_{ \xi \in\Omega_{\mathsf{bad}}}  \big\|  A_{U_\xi }\|_{L^2(\R^{n-1})}^2\right)  \left\|{S_1}f\right\|_2^2
\\ 
& \leq   \Theta_{m,n}D^n E \log N \left(\sup_{ \xi \in\Omega_{\mathsf{bad}}} \sup_{\sigma>0} \big\|  A_{U_\xi,\sigma}\circ S_1\|_{L^2(\R^{n-1})}^2\right)  \left\|{S_1}f\right\|_2^2
\\
& \leq \Theta_{m,n}D^n E \log N  \cdot K_{m,n-1,\Theta_{m,n,D},\Theta_{m,n,D}}(N)  \left\|{S_1}f\right\|_2^2 
\\
&\leq \Theta_{m,n}D^n E (\log N)^2 \left( \sup_{1\leq \mu \leq m } K^\times _{\mu,n-1,\Theta_{m,n,D}}\left(N^{\frac{m}{\mu}}\right) \right)\left\|{S_1}f\right\|_2^2 ,
\end{split}
\end{equation}
where we used Lemma  \ref{lemma:reductSmooth}  in the very last step. This completes the treatment of $ V_{\circ, \mathsf{bad}}$. 

\subsubsection*{Controlling $ V_{\circ, \mathsf{good}}$} This estimate is not much different to the one appearing in the corresponding proof of Theorem \ref{t:main:card}. We keep when possible the same notations.
For $\mathsf{C} \in \vec{\mathsf{C}}_{\mathsf{good}}$ we define  
$R_{\mathsf{C}}$ as in \eqref{e:rc} and again notice that
 $A_{v,s} f= A_{v,s}f_{R_{\mathsf{C}}}$  whenever $v\in \mathsf{C}$.
The next step is the estimation of the overlap of the sets $\{R_{\mathsf{C}}:\,\mathsf{C} \in \vec{\mathsf{C}}_{\mathsf{good}}\}$ at a generic  $\beta\in \mathbb R^n$.
 By homogeneity in the definition of $R_{v,s}$ it is enough to take $\beta \in \SS^{n-1}$ and approximate by  $\xi \in \Omega$ with $|\xi -\beta|<2^{-10}s$.   Then the overlap of the multipliers $\cic{1}_{R_{\mathsf{C}}}$ at $\beta$  is bounded by the same right hand side of \eqref{e:rc2}.  The removal of the bad clusters when constructing ${\mathsf{C}}_{\mathsf{good}}$ ensured the property
\[
\#\left\{\mathsf{C}\in \vec{\mathsf{C}}_{\mathsf{good}}:\,\mathsf{C} \subset R_{\xi,3s} \right\} \leq E^{m-1} 
\]
so we are left with counting 
\[
\begin{split} 
&\quad \#\left\{\mathsf{C}\in \vec{\mathsf{C}}_{\mathsf{good}}:\,\mathsf{C} \cap R_{\xi,2s} \neq \varnothing, \,\mathsf{C} \not \subset R_{\xi,3s}  \right\}
\\ 
& \leq  \#\left\{\mathsf{C}\in \vec{\mathsf{C}}_{\mathsf{good}}:\,\mathsf{C} \cap \left\{x\in \R^2:\xi\cdot x=\pm 3 s  \right\} \neq \varnothing \right\} \leq \Theta_{m,n} D^{2n}  E^{m-1},
\end{split}
\]
where \eqref{e:5bcp} has been taken into account. 
Therefore arguing exactly like in \eqref{e:cell} and using that $V_\mathsf{C}$ contains at most $(N/E)^m$ directions, see \eqref{e:pf562}, we obtain
\begin{equation} \label{e:5cell}
\begin{split} 
\big\|   \mathrm{A}_{  V_{\circ, \mathsf{good}} ,s} f\big\|^2_2 &\leq    \Theta_{n,m} D^{2n} E^{m-1} K^\times _{m,n,D}(N/E)  \left\|f\right\|_2^2.
\end{split}
\end{equation} 
The sought after (PART)$_{(m,n)}$ estimate for these values of $m,n,D,N,E$ follows by collecting  \eqref{e:5wall}, \eqref{e:5badbound} and \eqref{e:5cell}.

 
\begin{bibdiv}
\begin{biblist}

\bib{Alf}{article}{
      author={Alfonseca, Angeles},
       title={Strong type inequalities and an almost-orthogonality principle
  for families of maximal operators along directions in {$\Bbb R^2$}},
        date={2003},
        ISSN={0024-6107},
     journal={J. London Math. Soc. (2)},
      volume={67},
      number={1},
       pages={208\ndash 218},
         url={https://doi.org/10.1112/S0024610702003915},
      review={\MR{1942421}},
}

\bib{ASV1}{incollection}{
      author={Alfonseca, Angeles},
      author={Soria, Fernando},
      author={Vargas, Ana},
       title={An almost-orthogonality principle in {$L^2$} for directional
  maximal functions},
        date={2003},
   booktitle={Harmonic analysis at {M}ount {H}olyoke ({S}outh {H}adley, {MA},
  2001)},
      series={Contemp. Math.},
      volume={320},
   publisher={Amer. Math. Soc., Providence, RI},
       pages={1\ndash 7},
         url={https://doi.org/10.1090/conm/320/05594},
      review={\MR{1979927}},
}

\bib{ASV}{article}{
      author={Alfonseca, Angeles},
      author={Soria, Fernando},
      author={Vargas, Ana},
       title={A remark on maximal operators along directions in {${\Bbb
  R}^2$}},
        date={2003},
        ISSN={1073-2780},
     journal={Math. Res. Lett.},
      volume={10},
      number={1},
       pages={41\ndash 49},
         url={https://doi.org/10.4310/MRL.2003.v10.n1.a5},
      review={\MR{1960122}},
}

\bib{BarBas}{article}{
      author={Barone, Sal},
      author={Basu, Saugata},
       title={Refined bounds on the number of connected components of sign
  conditions on a variety},
        date={2012},
        ISSN={0179-5376},
     journal={Discrete Comput. Geom.},
      volume={47},
      number={3},
       pages={577\ndash 597},
         url={https://doi.org/10.1007/s00454-011-9391-3},
      review={\MR{2891249}},
}

\bib{BarrPAMS}{article}{
      author={Barrionuevo, Jose},
       title={Averages along uniformly distributed directions on a curve},
        date={1993},
        ISSN={0002-9939},
     journal={Proc. Amer. Math. Soc.},
      volume={119},
      number={3},
       pages={823\ndash 827},
         url={https://doi.org/10.2307/2160518},
      review={\MR{1172947}},
}

\bib{Barr93}{article}{
      author={Barrionuevo, Jose},
       title={Estimates for some {K}akeya-type maximal operators},
        date={1993},
        ISSN={0002-9947},
     journal={Trans. Amer. Math. Soc.},
      volume={335},
      number={2},
       pages={667\ndash 682},
         url={https://doi.org/10.2307/2154399},
      review={\MR{1150012}},
}

\bib{BaSo}{article}{
      author={Basu, Saugata},
      author={Sombra, Mart{\'i}n},
       title={Polynomial partitioning on varieties of codimension two and
  point-hypersurface incidences in four dimensions},
        date={2016},
        ISSN={0179-5376},
     journal={Discrete Comput. Geom.},
      volume={55},
      number={1},
       pages={158\ndash 184},
         url={https://doi.org/10.1007/s00454-015-9736-4},
      review={\MR{3439263}},
}

\bib{Bat}{article}{
      author={Bateman, Michael},
       title={Kakeya sets and directional maximal operators in the plane},
        date={2009},
        ISSN={0012-7094},
     journal={Duke Math. J.},
      volume={147},
      number={1},
       pages={55\ndash 77},
         url={http://dx.doi.org/10.1215/00127094-2009-006},
      review={\MR{2494456}},
}

\bib{BCR}{book}{
      author={Bochnak, Jacek},
      author={Coste, Michel},
      author={Roy, Marie-Fran\c{c}oise},
       title={Real algebraic geometry},
      series={Ergebnisse der Mathematik und ihrer Grenzgebiete (3) [Results in
  Mathematics and Related Areas (3)]},
   publisher={Springer-Verlag, Berlin},
        date={1998},
      volume={36},
        ISBN={3-540-64663-9},
         url={https://doi.org/10.1007/978-3-662-03718-8},
        note={Translated from the 1987 French original, Revised by the
  authors},
      review={\MR{1659509}},
}

\bib{Carbery}{article}{
      author={Carbery, Anthony},
       title={Differentiation in lacunary directions and an extension of the
  {M}arcinkiewicz multiplier theorem},
        date={1988},
        ISSN={0373-0956},
     journal={Ann. Inst. Fourier (Grenoble)},
      volume={38},
      number={1},
       pages={157\ndash 168},
         url={http://www.numdam.org/item?id=AIF_1988__38_1_157_0},
      review={\MR{949003}},
}

\bib{CDR}{article}{
      author={Christ, Michael},
      author={Duoandikoetxea, Javier},
      author={Rubio~de Francia, Jos\'e~L.},
       title={Maximal operators related to the {R}adon transform and the
  {C}alder\'on-{Z}ygmund method of rotations},
        date={1986},
        ISSN={0012-7094},
     journal={Duke Math. J.},
      volume={53},
      number={1},
       pages={189\ndash 209},
         url={https://doi.org/10.1215/S0012-7094-86-05313-5},
      review={\MR{835805}},
}

\bib{CorFeflac}{article}{
      author={C\'ordoba, A.},
      author={Fefferman, R.},
       title={On differentiation of integrals},
        date={1977},
        ISSN={0027-8424},
     journal={Proc. Nat. Acad. Sci. U.S.A.},
      volume={74},
      number={6},
       pages={2211\ndash 2213},
      review={\MR{0476977}},
}

\bib{Cor77}{article}{
      author={C\'ordoba, Antonio},
       title={The {K}akeya maximal function and the spherical summation
  multipliers},
        date={1977},
        ISSN={0002-9327},
     journal={Amer. J. Math.},
      volume={99},
      number={1},
       pages={1\ndash 22},
         url={https://doi.org/10.2307/2374006},
      review={\MR{0447949}},
}

\bib{Cor1982}{article}{
      author={C\'ordoba, Antonio},
       title={Geometric {F}ourier analysis},
        date={1982},
        ISSN={0373-0956},
     journal={Ann. Inst. Fourier (Grenoble)},
      volume={32},
      number={3},
       pages={vii, 215\ndash 226},
         url={http://www.numdam.org/item?id=AIF_1982__32_3_215_0},
      review={\MR{688026}},
}

\bib{CLO2}{book}{
      author={Cox, David~A.},
      author={Little, John},
      author={O'Shea, Donal},
       title={Using algebraic geometry},
     edition={Second edition},
      series={Graduate Texts in Mathematics},
   publisher={Springer, New York},
        date={2005},
      volume={185},
        ISBN={0-387-20706-6},
      review={\MR{2122859}},
}

\bib{CLO}{book}{
      author={Cox, David~A.},
      author={Little, John},
      author={O'Shea, Donal},
       title={Ideals, varieties, and algorithms},
     edition={Fourth edition},
      series={Undergraduate Texts in Mathematics},
   publisher={Springer, Cham},
        date={2015},
        ISBN={978-3-319-16720-6; 978-3-319-16721-3},
         url={https://doi.org/10.1007/978-3-319-16721-3},
        note={An introduction to computational algebraic geometry and %
  commutative algebra},
      review={\MR{3330490}},
}

\bib{DePf}{book}{
      author={Decker, Wolfram},
      author={Pfister, Gerhard},
       title={A first course in computational algebraic geometry},
      series={African Institute of Mathematics (AIMS) Library Series},
   publisher={Cambridge University Press, Cambridge},
        date={2013},
        ISBN={978-1-107-61253-2},
         url={https://doi.org/10.1017/CBO9781139565769},
      review={\MR{3052757}},
}

\bib{Dem}{article}{
      author={Demeter, Ciprian},
       title={Singular integrals along {$N$} directions in {$\Bbb R^2$}},
        date={2010},
        ISSN={0002-9939},
     journal={Proc. Amer. Math. Soc.},
      volume={138},
      number={12},
       pages={4433\ndash 4442},
         url={http://dx.doi.org/10.1090/S0002-9939-2010-10442-2},
      review={\MR{2680067}},
}

\bib{Dem12}{article}{
      author={Demeter, Ciprian},
       title={{$L^2$} bounds for a {K}akeya-type maximal operator in {$\Bbb
  R^3$}},
        date={2012},
        ISSN={0024-6093},
     journal={Bull. Lond. Math. Soc.},
      volume={44},
      number={4},
       pages={716\ndash 728},
         url={https://doi.org/10.1112/blms/bds004},
      review={\MR{2967239}},
}

\bib{DPGTZK}{article}{
      author={Di~Plinio, Francesco},
      author={Guo, Shaoming},
      author={Thiele, Christoph},
      author={Zorin-Kranich, Pavel},
       title={Square functions for bi-{L}ipschitz maps and directional
  operators},
        date={2018},
        ISSN={0022-1236},
     journal={J. Funct. Anal.},
      volume={275},
      number={8},
       pages={2015\ndash 2058},
         url={https://doi.org/10.1016/j.jfa.2018.07.005},
      review={\MR{3841536}},
}

\bib{DPP}{article}{
   author={Di Plinio, Francesco},
   author={Parissis, Ioannis},
   title={A sharp estimate for the Hilbert transform along finite order
   lacunary sets of directions},
   journal={Israel J. Math.},
   volume={227},
   date={2018},
   number={1},
   pages={189--214},
   issn={0021-2172},
   review={\MR{3846321}},
}

\bib{DPP2}{article}{
      author={Di~Plinio, Francesco},
      author={Parissis, Ioannis},
       title={On the {M}aximal {D}irectional {H}ilbert {T}ransform in {T}hree
  {D}imensions},
        date={2020},
        ISSN={1073-7928},
     journal={Int. Math. Res. Not. IMRN},
      number={14},
       pages={4324\ndash 4356},
         url={https://doi.org/10.1093/imrn/rny138},
      review={\MR{4126303}},
}

\bib{Dube}{article}{
      author={Dub\'e, Thomas~W.},
       title={The structure of polynomial ideals and {G}r\"obner bases},
        date={1990},
        ISSN={0097-5397},
     journal={SIAM J. Comput.},
      volume={19},
      number={4},
       pages={750\ndash 775},
         url={https://doi.org/10.1137/0219053},
      review={\MR{1053942}},
}

\bib{FPS+}{article}{
      author={Fox, Jacob},
      author={Pach, J\'anos},
      author={Sheffer, Adam},
      author={Suk, Andrew},
      author={Zahl, Joshua},
       title={A semi-algebraic version of {Z}arankiewicz's problem},
        date={2017},
        ISSN={1435-9855},
     journal={J. Eur. Math. Soc. (JEMS)},
      volume={19},
      number={6},
       pages={1785\ndash 1810},
         url={https://doi.org/10.4171/JEMS/705},
      review={\MR{3646875}},
}

\bib{GHS}{article}{
      author={Grafakos, Loukas},
      author={Honz\'{i}k, Petr},
      author={Seeger, Andreas},
       title={On maximal functions for {M}ikhlin-{H}\"ormander multipliers},
        date={2006},
        ISSN={0001-8708},
     journal={Adv. Math.},
      volume={204},
      number={2},
       pages={363\ndash 378},
         url={http://dx.doi.org/10.1016/j.aim.2005.05.010},
      review={\MR{2249617}},
}

\bib{G1}{article}{
      author={Guth, Larry},
       title={A restriction estimate using polynomial partitioning},
        date={2016},
        ISSN={0894-0347},
     journal={J. Amer. Math. Soc.},
      volume={29},
      number={2},
       pages={371\ndash 413},
         url={https://doi.org/10.1090/jams827},
      review={\MR{3454378}},
}

\bib{Guth2}{article}{
      author={Guth, Larry},
       title={Restriction estimates using polynomial partitioning {II}},
        date={201603},
      eprint={1603.04250},
         url={https://arxiv.org/abs/1603.04250},
}

\bib{GK}{article}{
      author={Guth, Larry},
      author={Katz, Nets~Hawk},
       title={On the {E}rd{\"o}s distinct distances problem in the plane},
        date={2015},
        ISSN={0003-486X},
     journal={Ann. of Math. (2)},
      volume={181},
      number={1},
       pages={155\ndash 190},
         url={https://doi.org/10.4007/annals.2015.181.1.2},
      review={\MR{3272924}},
}

\bib{Katz}{article}{
      author={Katz, Nets~Hawk},
       title={Maximal operators over arbitrary sets of directions},
        date={1999},
        ISSN={0012-7094},
     journal={Duke Math. J.},
      volume={97},
      number={1},
       pages={67\ndash 79},
         url={http://dx.doi.org/10.1215/S0012-7094-99-09702-8},
      review={\MR{1681088}},
}

\bib{KB}{article}{
      author={Katz, Nets~Hawk},
       title={Remarks on maximal operators over arbitrary sets of directions},
        date={1999},
        ISSN={0024-6093},
     journal={Bull. London Math. Soc.},
      volume={31},
      number={6},
       pages={700\ndash 710},
         url={https://doi.org/10.1112/S0024609399005949},
      review={\MR{1711029}},
}

\bib{KatzRogers}{article}{
      author={Katz, Nets~Hawk},
      author={Rogers, Keith~M.},
       title={On the polynomial {W}olff axioms},
        date={2018},
        ISSN={1016-443X},
     journal={Geom. Funct. Anal.},
      volume={28},
      number={6},
       pages={1706\ndash 1716},
         url={https://doi.org/10.1007/s00039-018-0466-7},
      review={\MR{3881832}},
}

\bib{LacLi:tams}{article}{
      author={Lacey, Michael~T.},
      author={Li, Xiaochun},
       title={Maximal theorems for the directional {H}ilbert transform on the
  plane},
        date={2006},
        ISSN={0002-9947},
     journal={Trans. Amer. Math. Soc.},
      volume={358},
      number={9},
       pages={4099\ndash 4117},
         url={http://dx.doi.org/10.1090/S0002-9947-06-03869-4},
      review={\MR{2219012}},
}

\bib{Latyshev}{article}{
      author={Latyshev, V.~N.},
       title={A combinatorial complexity of {G}r\"obner bases},
        date={2000},
        ISSN={1072-3374},
     journal={J. Math. Sci. (New York)},
      volume={102},
      number={3},
       pages={4134\ndash 4138},
         url={https://doi.org/10.1007/BF02673882},
        note={Algebra, 13},
      review={\MR{1797980}},
}

\bib{MP}{article}{
      author={Matou{\v s}ek, Ji\v~r\'\i},
      author={Pat\'akov\'a, Zuzana},
       title={Multilevel polynomial partitions and simplified range searching},
        date={2015},
        ISSN={0179-5376},
     journal={Discrete Comput. Geom.},
      volume={54},
      number={1},
       pages={22\ndash 41},
         url={https://doi.org/10.1007/s00454-015-9701-2},
      review={\MR{3351757}},
}

\bib{NSW}{article}{
      author={Nagel, A.},
      author={Stein, E.~M.},
      author={Wainger, S.},
       title={Differentiation in lacunary directions},
        date={1978},
        ISSN={0027-8424},
     journal={Proc. Nat. Acad. Sci. U.S.A.},
      volume={75},
      number={3},
       pages={1060\ndash 1062},
      review={\MR{0466470}},
}

\bib{OW}{article}{
      author={{Ou}, Y.},
      author={{Wang}, H.},
       title={{A cone restriction estimate using polynomial partitioning}},
        date={2017-04},
     journal={ArXiv e-prints},
      eprint={1704.05485},
}

\bib{PR2}{article}{
      author={Parcet, Javier},
      author={Rogers, Keith~M.},
       title={Directional maximal operators and lacunarity in higher
  dimensions},
        date={2015},
        ISSN={0002-9327},
     journal={Amer. J. Math.},
      volume={137},
      number={6},
       pages={1535\ndash 1557},
         url={http://dx.doi.org/10.1353/ajm.2015.0038},
      review={\MR{3432267}},
}

\bib{Sheffer}{book}{
      author={Sheffer, Adam},
       title={Polynomial methods and incidence theory},
      series={http://faculty.baruch.cuny.edu/ASheffer/},
        date={2020},
}

\bib{SS}{article}{
      author={Sj\"ogren, P.},
      author={Sj\"olin, P.},
       title={Littlewood-{P}aley decompositions and {F}ourier multipliers with
  singularities on certain sets},
        date={1981},
        ISSN={0373-0956},
     journal={Ann. Inst. Fourier (Grenoble)},
      volume={31},
      number={1},
       pages={vii, 157\ndash 175},
         url={http://www.numdam.org/item?id=AIF_1981__31_1_157_0},
      review={\MR{613033}},
}

\bib{Strlac}{article}{
      author={Str\"omberg, Jan-Olov},
       title={Weak estimates on maximal functions with rectangles in certain
  directions},
        date={1977},
        ISSN={0004-2080},
     journal={Ark. Mat.},
      volume={15},
      number={2},
       pages={229\ndash 240},
         url={https://doi.org/10.1007/BF02386043},
      review={\MR{0487260}},
}

\bib{Stromberg}{article}{
      author={Str\"omberg, Jan-Olov},
       title={Maximal functions associated to rectangles with uniformly
  distributed directions},
        date={1978},
     journal={Ann. Math. (2)},
      volume={107},
      number={2},
       pages={399\ndash 402},
      review={\MR{0481883}},
}

\bib{Tao}{article}{
      author={Tao, Terence},
       title={The {B}ochner-{R}iesz conjecture implies the restriction
  conjecture},
        date={1999},
        ISSN={0012-7094},
     journal={Duke Math. J.},
      volume={96},
      number={2},
       pages={363\ndash 375},
         url={https://doi.org/10.1215/S0012-7094-99-09610-2},
      review={\MR{1666558}},
}

\bib{Zahl}{article}{
      author={Zahl, Joshua},
       title={An improved bound on the number of point-surface incidences in
  three dimensions},
        date={2013},
        ISSN={1715-0868},
     journal={Contrib. Discrete Math.},
      volume={8},
      number={1},
       pages={100\ndash 121},
      review={\MR{3118901}},
}

\bib{Zahl2}{article}{
      author={Zahl, Joshua},
       title={A {S}zemer\'edi-{T}rotter type theorem in {$\Bbb{R}^4$}},
        date={2015},
        ISSN={0179-5376},
     journal={Discrete Comput. Geom.},
      volume={54},
      number={3},
       pages={513\ndash 572},
         url={https://doi.org/10.1007/s00454-015-9717-7},
      review={\MR{3392965}},
}

\end{biblist}
\end{bibdiv}


\end{document}